\numberwithin{equation}{section}
\newtheorem{theo}{Theorem}[section]
\newtheorem{pro}[theo]{Proposition}
\newtheorem{lem}[theo]{Lemma}
\newtheorem{cor}[theo]{Corollary}
\newtheorem{rem}[theo]{Remark}
\newtheorem*{thmA}{Theorem A}
\newtheorem*{thmB}{Theorem B}
\newtheorem*{thmC}{Theorem C}
\newtheorem*{thmD}{Theorem D}
\renewcommand{\(}{\left(}
\renewcommand{\)}{\right)}
\renewcommand{\~}{\tilde}
\renewcommand{\-}{\overline}
\newcommand{\R}{\mathbb{R}}
\renewcommand{\S}{\mathbb{S}}
\renewcommand{\H}{\mathbb{H}}
\renewcommand{\a}{\alpha}
\newcommand{\g}{\gamma}
\renewcommand{\d}{\delta}
\newcommand{\e}{\varepsilon}
\renewcommand{\k}{\kappa}
\renewcommand{\l}{\lambda}
\newcommand{\D}{\Delta}
\renewcommand{\t}{\theta}
\newcommand{\s}{\sigma}
\newcommand{\G}{\Gamma}
\newcommand{\T}{\Theta}
\renewcommand{\L}{\Lambda}
\newcommand{\ra}{\rightarrow}
\newcommand{\mrm}{\mathrm}
\newcommand{\Vol}{\mrm{Vol}}
\begin{document}
\title[Harmonic mean curvature flow and geometric inequalities]{Harmonic mean curvature flow and geometric inequalities}
\author{Ben Andrews, Yingxiang Hu, Haizhong Li}

\address{Mathematical Sciences Institute \\ Australian University \\ ACT 2601 \\ Australia }
\email{ben.andrews@anu.edu.au}
\address{Yau Mathematical Sciences Center \\ Tsinghua University \\ Beijing 100084 \\ China }
\email{huyingxiang@mail.tsinghua.edu.cn}
\address{Department of Mathematical Sciences \\ Tsinghua University \\ Beijing, 100084 \\ China }
\email{lihz@tsinghua.edu.cn}
\date{}
\thanks{}
\begin{abstract}
In this article, we will use the harmonic mean curvature flow to prove a new class of Alexandrov-Fenchel type inequalities for strictly convex hypersurfaces in hyperbolic space in terms of total curvature, which is the integral of Gaussian curvature on the hypersurface. We will also use the harmonic mean curvature flow to prove a new class of geometric inequalities for horospherically convex hypersurfaces in hyperbolic space. Using these new Alexandrov-Fenchel type inequalities and the inverse mean curvature flow, we obtain an Alexandrov-Fenchel inequality for strictly convex hypersurfaces in hyperbolic space, which was previously proved for horospherically convex hypersurfaces by Wang and Xia \cite{Wang-Xia2014}. Finally, we use the mean curvature flow to prove a new Heintze-Karcher type inequality for hypersurfaces with positive Ricci curvature in hyperbolic space.
\end{abstract}

{\maketitle}
\section{Introduction}
The Alexandrov-Fenchel type inequalities for hypersurfaces in space forms have been extensively investigated by many authors. Using the inverse curvature flow and the optimal Sobolev inequality of Beckner \cite{Beckner1992}, Wei, Xiong and the third author \cite{Li-Wei-Xiong2014} proved a geometric inequality for two-convex (i.e., $p_1>0$ and $p_2>0$) and starshaped hypersurfaces in hyperbolic space, where $p_k$ is the (normalized) $k$-th mean curvature.
\begin{thmA}[\cite{Li-Wei-Xiong2014}]
If $\Sigma$ is a $2$-convex and starshaped hypersurface in $\H^n$, then
\begin{align}\label{1.1}
\int_\Sigma p_2 \geq |\Sigma|+\omega_{n-1}^\frac{2}{n-1}|\Sigma|^\frac{n-3}{n-1},
\end{align}
where $\omega_{n-1}$ is the area of the unit sphere $\S^{n-1}\subset \R^n$ and $|\Sigma|$ is the area of the hypersurface $\Sigma$, respectively. Equality holds in (\ref{1.1}) if and only if $\Sigma$ is a geodesic sphere.
\end{thmA}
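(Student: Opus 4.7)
My strategy is to deform $\Sigma$ by the inverse mean curvature flow (IMCF) $\partial_t X = H^{-1}\nu$ in $\H^n$, to establish the monotonicity of a suitably normalized curvature integral along the flow (using Beckner's sharp Sobolev inequality to handle a gradient remainder), and to extract (\ref{1.1}) by comparing the initial value with the spherical limit.

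\emph{Step 1 (reformulate using the Gauss equation).} Since $\H^n$ has constant sectional curvature $-1$, the Gauss equation yields
\begin{equation*}
R_\Sigma \;=\; H^2 - |A|^2 - (n-1)(n-2) \;=\; (n-1)(n-2)(p_2 - 1).
\end{equation*}
Hence (\ref{1.1}) is equivalent to the intrinsic Sobolev-type estimate
\begin{equation*}
\int_\Sigma R_\Sigma \, d\mu \;\geq\; (n-1)(n-2)\, \omega_{n-1}^{2/(n-1)}\, |\Sigma|^{(n-3)/(n-1)},
\end{equation*}
and one observes that equality is achieved on every geodesic sphere (on which the principal curvatures are $\coth r$, giving $p_2-1 = \sinh^{-2} r$ and $|\Sigma| = \omega_{n-1}\sinh^{n-1} r$, so that the ratio below is constant in $r$).

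\emph{Step 2 (long-time behavior of IMCF).} By Gerhardt's theorem for inverse curvature flows in $\H^n$, a 2-convex starshaped $\Sigma_0$ generates a smooth IMCF solution $\{\Sigma_t\}_{t\geq 0}$ that remains 2-convex and starshaped. Writing $\Sigma_t$ as the radial graph of $r(t,\theta)$ over $\S^{n-1}$, one has $|\Sigma_t| = |\Sigma_0|\,e^t$ and, after rescaling, $\Sigma_t$ converges smoothly to a geodesic sphere as $t\to\infty$; in particular $H\to (n-1)\coth r$ and $|A|^2/H^2 \to 1/(n-1)$ uniformly.

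\emph{Step 3 (monotone functional).} Define
\begin{equation*}
\mc{Q}(t) \;:=\; |\Sigma_t|^{-(n-3)/(n-1)} \int_{\Sigma_t} (p_2 - 1)\, d\mu_t.
\end{equation*}
Using $\partial_t d\mu_t = d\mu_t$ and the evolution $\partial_t H = H^{-2}\Delta H + H^{-2}(|A|^2 - (n-1))$ under IMCF, together with the divergence identity $\divv(H^{-1}\nabla H) = H^{-1}\Delta H - H^{-2}|\nabla H|^2$ and integration by parts, one computes $\frac{d}{dt}\mc{Q}(t)$ and rewrites it as the sum of a pointwise non-positive term plus a Rayleigh-type quadratic form in $\nabla\log H$ (equivalently in the derivatives of $r(t,\cdot)$). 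I expect this to be the heart of the argument: the sharp Beckner--Sobolev inequality on $\S^{n-1}$, applied to the rescaled graph function, controls exactly that remainder and yields $\frac{d}{dt}\mc{Q}(t) \leq 0$, with equality iff $\Sigma_t$ is a geodesic sphere.

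\emph{Step 4 (limit and conclusion).} By Step 2, $\mc{Q}(t) \to \omega_{n-1}^{2/(n-1)}$ as $t\to\infty$ (the value on any geodesic sphere). Combined with monotonicity, $\mc{Q}(0) \geq \omega_{n-1}^{2/(n-1)}$, which rearranges to (\ref{1.1}). The equality case in Beckner's inequality and in the pointwise term of Step 3 both force constancy of $r(0,\cdot)$, so equality holds only on geodesic spheres. The main obstacle is Step 3: identifying the precise combination of flow evolution equations and integration by parts that casts $\frac{d}{dt}\mc{Q}$ in a form directly amenable to Beckner's inequality with the sharp constant $\omega_{n-1}^{2/(n-1)}$.
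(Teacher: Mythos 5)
Theorem~A is not proved in this paper --- it is quoted from \cite{Li-Wei-Xiong2014} --- but the introduction does tell you the two ingredients of that proof: Gerhardt's inverse mean curvature flow and Beckner's sharp Sobolev inequality on $\S^{n-1}$. Your outline has both ingredients, but you have put Beckner in the wrong place, and this is tied to a genuine error in Steps~2 and~4. Under the IMCF in $\H^n$ the \emph{rescaled} hypersurfaces do \textbf{not} converge to a geodesic sphere: Gerhardt's convergence theorem says the function $r(t,\cdot)-t/(n-1)$ tends to a smooth, generically \emph{non-constant} limit $\varphi$ on $\S^{n-1}$, so the rescaled induced metric converges to the conformally round metric $e^{2\varphi}\sigma$. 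Consequently $\lim_{t\to\infty}\mathcal{Q}(t)$ is not automatically $\omega_{n-1}^{2/(n-1)}$; using the Gauss equation $L_1=R_\Sigma=(n-1)(n-2)(p_2-1)$ one finds that the limit equals (up to normalization) the Yamabe functional of $e^{2\varphi}\sigma$, a quantity depending on $\varphi$. It is precisely \emph{here} that Beckner's inequality enters: it asserts that this Yamabe functional is at least its value at the round metric, giving $\lim\mathcal{Q}(t)\geq\omega_{n-1}^{2/(n-1)}$ with equality iff $\varphi$ is constant. Dropping the conformal factor and declaring the limit to equal $\omega_{n-1}^{2/(n-1)}$, as your Step~4 does, skips the whole reason Beckner is cited.

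Conversely, the monotonicity in Step~3 does \emph{not} require Beckner, nor any analysis of gradient remainders. Using the Reilly-type variational formula (\ref{6.3}) along IMCF one has
\begin{equation*}
\frac{d}{dt}\int_{\Sigma_t}p_2=\int_{\Sigma_t}\frac{(n-3)p_3+2p_1}{(n-1)p_1}\,d\mu_t,\qquad \frac{d}{dt}|\Sigma_t|=|\Sigma_t|,
\end{equation*}
which after a short computation gives
\begin{equation*}
\frac{d}{dt}\mathcal{Q}(t)=\frac{n-3}{n-1}\,|\Sigma_t|^{-\frac{n-3}{n-1}}\int_{\Sigma_t}\left(\frac{p_3}{p_1}-p_2\right)d\mu_t.
\end{equation*}
The pointwise Newton--MacLaurin inequalities $p_1p_3\leq p_2^2$ and $p_2\leq p_1^2$ (Lemma~\ref{lem-2.1}, valid for $\kappa\in\Gamma_2^+$) give $p_3/p_1\leq p_2$, hence $\frac{d}{dt}\mathcal{Q}\leq 0$, with equality iff $\Sigma_t$ is umbilic. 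There is no Rayleigh-quotient term in $\nabla\log H$ to control, so invoking Beckner there is a misdirection. (One point you should not gloss over is that $2$-convexity must be preserved along the flow for the Newton--MacLaurin step to apply at every time; this is verified in the cited source, not a triviality.) If you move Beckner to the limit computation and fix Step~2 to state the conformally round limit rather than a round one, your outline matches the argument of \cite{Li-Wei-Xiong2014}.
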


It is observed by Ge, Wang and Wu \cite{Ge-Wang-Wu2014} that (\ref{1.1}) can be rewritten as
\begin{align*}
\int_\Sigma (p_2-1)\geq \omega_{n-1}^\frac{2}{n-1}|\Sigma|^\frac{n-3}{n-1},
\end{align*}
and the integrand $p_2-1$ is a constant multiple of the first Gauss-Bonnet curvature $L_1$. Here $L_k$ is the $k$-th Gauss-Bonnet curvature for hypersurfaces in space form $\mathbb{M}^n(\e)$ with constant curvature $\e\in \{-1,0,1\}$, which can be expressed as
$$
L_k(\k)=C_{n-1}^{2k}(2k)!\sum_{i=0}^{k}C_k^i \e^{i} p_{2k-2i}(\k),
$$
see (\ref{2.9}) for details. By establishing the (non-trivial) monotonicity of the functional
$$
Q(t):=|\Sigma_t|^{-\frac{n-1-2k}{n-1}}\int_{\Sigma_t} L_k,
$$
along inverse curvature flows, together with some generalized Sobolev inequalities for $L_k$ (see \cite[Theorem 1A]{Guan-Wang2004}), they proved optimal Sobolev inequalities for {\em horospherical convex} hypersurfaces in $\H^n$ (i.e., hypersurfaces with all principal curvatures $\k_i\geq 1$, which will also be called {\em h-convex}) hypersurfaces). Recently, the second and the third authors \cite{Hu-Li2018} generalized their results to the hypersurfaces with nonnegative sectional curvature (i.e., $\k_i\k_j\geq 1$ for all distinct $i,j$) in hyperbolic space.
\begin{thmB}[\cite{Ge-Wang-Wu2014},\cite{Hu-Li2018}]
Let $0<2k< n-1$. If $\Sigma$ is a hypersurface with nonnegative sectional curvature in $\H^n$, then
\begin{align}\label{1.2}
\int_\Sigma L_k \geq C_{n-1}^{2k}(2k)!\omega_{n-1}^\frac{2k}{n-1}|\Sigma|^{\frac{n-1-2k}{n-1}}.
\end{align}
Equality holds in (\ref{1.2}) if and only if $\Sigma$ is a geodesic sphere.
\end{thmB}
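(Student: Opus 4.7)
The plan is to follow the inverse-curvature-flow strategy pioneered by Ge--Wang--Wu and extended by Hu--Li: prove monotonicity of a scale-invariant energy $Q(t)$ along a suitably chosen inverse curvature flow, and identify the limit of $Q(t)$ as the hypersurface becomes round at infinity.

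\textbf{Step 1 (Choice of flow and preservation of curvature condition).} I would evolve $\Sigma_0=\Sigma$ by an inverse curvature flow of the form $\partial_t X=F^{-1}\nu$, with $F$ chosen so that (a) long-time existence holds in $\H^n$, (b) the rescaled leaves converge smoothly to a geodesic sphere, and (c) the nonnegative-sectional-curvature condition $\k_i\k_j\geq 1$ is preserved. The inverse mean curvature flow ($F=H$) is the natural candidate. The first task is to verify via the tensor maximum principle, applied to the evolution of the Weingarten operator, that the closed cone $\{\k_i\k_j\geq 1,\,\forall i\neq j\}$ is preserved (this is the generalization over h-convexity that is the new contribution of \cite{Hu-Li2018}). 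Once this is in hand, standard Gerhardt-type estimates yield smooth long-time existence and convergence of the properly rescaled flow to a geodesic sphere.

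\textbf{Step 2 (Monotonicity of $Q(t)$).} Under IMCF one has $\partial_t d\mu_t=d\mu_t$, so $|\Sigma_t|=e^t|\Sigma_0|$. Differentiating
\[
 Q(t)=|\Sigma_t|^{-\frac{n-1-2k}{n-1}}\int_{\Sigma_t}L_k
\]
reduces the sign of $Q'(t)$ to a pointwise inequality among the elementary symmetric functions of $\k$, after integrating divergence-type terms produced by the variation formula for $L_k$ on $\Sigma_t\subset\H^n$. Writing $L_k$ as $\sum_i C_k^i (-1)^i \binom{n-1}{2k}(2k)! p_{2k-2i}(\k)$ and using the Newton--MacLaurin inequalities (applied to the shifted principal curvatures that the condition $\k_i\k_j\geq 1$ controls), one verifies
\[
 \frac{d}{dt}\int_{\Sigma_t}L_k \;\leq\; \frac{n-1-2k}{n-1}\int_{\Sigma_t}L_k,
\]
so $Q(t)$ is non-increasing, with equality at a time $t_0$ forcing $\Sigma_{t_0}$ to be totally umbilical.

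\textbf{Step 3 (Identification of the limit and rigidity).} Since the rescaled leaves converge to a geodesic sphere $S_r$ of appropriate radius $r=r(t)\to\infty$, and on $S_r$ the principal curvatures are $\k_i=\coth r$, a direct computation gives
\[
 \lim_{t\to\infty} Q(t)= C_{n-1}^{2k}(2k)!\,\omega_{n-1}^{\frac{2k}{n-1}}.
\]
Combining monotonicity $Q(0)\geq \lim_{t\to\infty}Q(t)$ with this limit yields (\ref{1.2}). Rigidity follows by tracing back equality through the Newton--MacLaurin step of the monotonicity argument: equality propagates from $t=0$ to all $t\geq 0$, forcing $\Sigma$ to be umbilical, hence a geodesic sphere.

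\textbf{Main obstacle.} The principal difficulty lies in Step 2, where the pointwise algebraic inequality that drives the monotonicity must be strong enough to handle the strictly weaker hypothesis $\k_i\k_j\geq 1$ (as opposed to $\k_i\geq 1$). This forces one to use a more delicate combinatorial identity relating the Gauss--Bonnet curvatures $L_k$ to the symmetric functions $p_j$, together with a subtler use of Newton--MacLaurin than in the h-convex case. The preservation of $\k_i\k_j\geq 1$ along IMCF in Step 1 is the other critical technical point, requiring a careful analysis of the zero-order terms in the evolution of the second fundamental form coming from the curvature tensor of $\H^n$.
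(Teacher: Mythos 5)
Your Steps 1 and 2 match the strategy that \cite{Ge-Wang-Wu2014} and \cite{Hu-Li2018} actually follow (inverse mean curvature flow, preservation of the cone $\{\k_i\k_j\geq 1\}$, monotonicity of $Q(t)=|\Sigma_t|^{-\frac{n-1-2k}{n-1}}\int_{\Sigma_t}L_k$), and the present paper describes that proof in exactly these terms. The genuine gap is in Step 3: the claim that ``the rescaled leaves converge to a geodesic sphere, so a direct computation gives $\lim_{t\to\infty}Q(t)=C_{n-1}^{2k}(2k)!\,\omega_{n-1}^{2k/(n-1)}$'' does not go through. Along IMCF in $\H^n$ the available pointwise estimate (Gerhardt, cf.\ the rate used in Proposition \ref{proposition-6.2}) is $|h_i^j-\d_i^j|\leq Ce^{-t/(n-1)}$, while $|\Sigma_t|=e^t|\Sigma_0|$. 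Since $L_k$ is proportional to $\sum_i C_k^i(-1)^ip_{2k-2i}$ and $\sum_i C_k^i(-1)^i=0$, the leading term cancels and one only gets $L_k=O(e^{-t/(n-1)})$ pointwise; hence
\begin{equation*}
|\Sigma_t|^{-\frac{n-1-2k}{n-1}}\int_{\Sigma_t}L_k
= O\!\left(e^{-\frac{(n-1-2k)t}{n-1}}\cdot e^{t}\cdot e^{-\frac{t}{n-1}}\right)
= O\!\left(e^{\frac{(2k-1)t}{n-1}}\right),
\end{equation*}
which is \emph{not} bounded for $k\geq 1$. The naive asymptotics therefore fail; they only show $Q(t)$ could a priori diverge, not what its limit is.

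What is actually needed, and what the paper explicitly states is the second ingredient of the original proof, is the generalized Sobolev inequality of Guan--Wang (\cite[Theorem 1A]{Guan-Wang2004}): as $t\to\infty$ the rescaled intrinsic metrics on $\Sigma_t$ converge to a metric conformal to the round metric on $\S^{n-1}$, and the optimal $L_k$-Sobolev inequality for such conformal metrics yields the lower bound $\liminf_{t\to\infty}Q(t)\geq C_{n-1}^{2k}(2k)!\,\omega_{n-1}^{2k/(n-1)}$. The paper also flags that the nonnegative sectional curvature hypothesis is used both in the monotonicity step \emph{and} in this lower-bound step, not just in Step 2 as your write-up suggests. (This is precisely the subtlety the present paper avoids in its own new theorems by choosing \emph{contracting} flows, for which the hypersurfaces shrink to a point and the limits of all curvature integrals and quermassintegrals are elementary, cf.\ Lemma \ref{lem-3.6}.) Without invoking the Guan--Wang inequality or an equivalent replacement, your Step 3 is incomplete and the argument does not close.

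A minor additional caveat: you assert that equality at one time $t_0$ forces $\Sigma_{t_0}$ to be totally umbilical via Newton--MacLaurin, but in the nonnegative sectional curvature setting (as opposed to strict convexity or h-convexity) the equality case of the relevant pointwise inequality admits degenerate configurations with some $\k_i\k_j=1$. The rigidity argument in \cite{Hu-Li2018} needs the strict preservation of the cone for $t>0$ to rule these out, analogous to the way Lemma \ref{lem-3.9} is used for strict h-convexity in the proof of Theorem \ref{main-theo-2} here; your sketch elides this.
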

It should be noticed that if $2k=n-1$, (\ref{1.2}) is an equality by the Gauss-Bonnet-Chern theorem \cite{Chern1944,Chern1945}. The nonnegativity of sectional curvature of the hypersurface plays an essential role in establishing the monotonicity of $Q(t)$ and the lower bound of $\lim_{t\ra\infty} Q(t)$. Based on Theorem B, we have the following Alexandrov-Fenchel inequalities for curvature integrals $\int_{\Sigma}p_{2k}$ and quermassintegrals $W_{2k+1}(\Omega)$ in terms of the area of the hypersurface, where $0<2k\leq n-1$.
\begin{thmC}[\cite{Ge-Wang-Wu2014},\cite{Hu-Li2018}]
Let $0<2k\leq n-1$.
\begin{enumerate}[(i)]
\item If $\Sigma$ is a hypersurface with nonnegative sectional curvature in $\H^n$, then
\begin{align}\label{1.3}
\frac{\int_{\Sigma}p_{2k}}{\omega_{n-1}} \geq \frac{|\Sigma|}{\omega_{n-1}}\left[1+\(\frac{|\Sigma|}{\omega_{n-1}}\)^{-\frac{2}{n-1}}\right]^k.
\end{align}
Equality holds in (\ref{1.3}) if and only if $\Sigma$ is a geodesic sphere in $\H^n$.
\item If $\Omega$ is a smooth bounded domain in $\H^n$ with boundary $\Sigma$ having nonnegative sectional curvature, then
\begin{align}\label{1.4}
\frac{W_{2k+1}(\Omega)}{\omega_{n-1}}\geq \frac{1}{n}\sum_{i=0}^{k}\frac{n-1-2k}{n-1-2i}C_k^i\(\frac{|\Sigma|}{\omega_{n-1}}\)^\frac{n-1-2i}{n-1}.
\end{align}
Equality holds in (\ref{1.4}) if and only if $\Sigma$ is a geodesic sphere in $\H^n$.
\end{enumerate}
\end{thmC}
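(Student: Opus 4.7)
Both parts will be derived from Theorem B by algebraic manipulations. Part (i) relies on a binomial inversion expressing $p_{2k}$ as a positive linear combination of the $L_j$'s, after which Theorem B applies termwise. Part (ii) combines this inversion with the hyperbolic Minkowski-type recursion for quermassintegrals to produce an identity for $nW_{2k+1}$ whose coefficients against $\int_\Sigma L_j$ are manifestly nonnegative.

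\emph{Part (i).} Reading the defining formula for $L_k$ in $\H^n$ (the case $\e=-1$) as $L_k/[C_{n-1}^{2k}(2k)!] = \sum_{j=0}^k (-1)^{k-j}C_k^j\, p_{2j}$, standard binomial (M\"obius) inversion on the chain $\{0,1,\ldots,k\}$ yields $p_{2k} = \sum_{j=0}^k C_k^j \, L_j/[C_{n-1}^{2j}(2j)!]$, with the convention $L_0\equiv 1$. Integrating over $\Sigma$ and applying Theorem B to each $\int_\Sigma L_j$ with $j\geq 1$ (the $j=0$ term giving $|\Sigma|$ exactly, and the endpoint $2j=n-1$, when it occurs, being the Gauss--Bonnet--Chern equality), one obtains $\int_\Sigma p_{2k} \geq \sum_{j=0}^k C_k^j \, \omega_{n-1}^{2j/(n-1)}\, |\Sigma|^{(n-1-2j)/(n-1)} = |\Sigma|\bigl[1+(|\Sigma|/\omega_{n-1})^{-2/(n-1)}\bigr]^k$. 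Dividing by $\omega_{n-1}$ is exactly \eqref{1.3}, and equality in each invoked instance of Theorem B forces $\Sigma$ to be a geodesic sphere.

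\emph{Part (ii).} The auxiliary input is the hyperbolic Minkowski-type recursion for quermassintegrals, $\int_\Sigma p_p = n\, W_{p+1}(\Omega) + \frac{pn}{n-p+1}\, W_{p-1}(\Omega)$, which holds in $\H^n$ for any smooth bounded domain $\Omega$ (a classical identity from the integral geometry of spaces of constant curvature). Combined with the inversion from Part (i), a brief induction on $k$ produces the closed-form identity $n\, W_{2k+1}(\Omega) = \sum_{j=0}^k \frac{n-1-2k}{n-1-2j}\cdot \frac{C_k^j}{C_{n-1}^{2j}(2j)!}\, \int_\Sigma L_j$, whose coefficients are manifestly nonnegative for $2k\leq n-1$. (As a sanity check, this identity reduces on geodesic balls via $\int_\Sigma L_j = C_{n-1}^{2j}(2j)!\, \omega_{n-1}\sinh^{n-1-2j}r$ to the formula $W_{2k+1}(B_r) = \frac{n-2k-1}{n}\omega_{n-1}\int_0^r \sinh^{n-2-2k}\cosh^{2k+1}\,ds$ after expanding $\cosh^{2k+1} = \cosh(1+\sinh^2)^k$.) Applying Theorem B termwise to the right-hand side and dividing by $n\, \omega_{n-1}$ yields \eqref{1.4}, with equality once more forcing $\Sigma$ to be a geodesic sphere.

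\emph{Principal obstacle.} Part (i) is essentially bookkeeping once the inversion is spotted. The crux of Part (ii) is invoking the hyperbolic Minkowski recursion correctly: the correction term $W_{p-1}$ enters with a positive sign (absent from the Euclidean identity $nW_{k+1}=\int p_k$, and generated by the negative ambient curvature of $\H^n$), and it is precisely this positivity that guarantees nonnegative coefficients in the $L_j$-expansion of $nW_{2k+1}$, so that Theorem B can be used termwise without flipping the direction of the inequality. The combinatorial identity needed in the induction,
$C_k^j - \tfrac{2k}{n-1-2j}C_{k-1}^j = \tfrac{n-1-2k}{n-1-2j}C_k^j$,
then closes the argument.
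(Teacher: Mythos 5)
Your argument is correct, and it follows the route the paper itself implicitly endorses (``Based on Theorem~B, we have the following Alexandrov--Fenchel inequalities\dots''); since the paper only cites Theorem~C from \cite{Ge-Wang-Wu2014,Hu-Li2018} without reproducing the derivation, there is no in-paper proof to compare against line by line, but your reconstruction is the natural one. The binomial inversion in Part~(i) is sound: from $\~L_j=\sum_{i=0}^j(-1)^{i}C_j^i p_{2j-2i}$ (the $\e=-1$ case of~(\ref{2.9})--(\ref{2.10})) one gets $p_{2k}=\sum_{j=0}^k C_k^j\,\~L_j$ by the self-inverse binomial transform, and termwise application of Theorem~B (together with $L_0\equiv1$ at $j=0$ and Gauss--Bonnet--Chern at the endpoint $2j=n-1$) gives~(\ref{1.3}). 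In Part~(ii) the hyperbolic recursion you invoke is exactly~(\ref{2.5}) with $\e=-1$, the positive sign on $W_{p-1}$ is what keeps the coefficients nonnegative, the combinatorial identity $C_k^j(k-j)=kC_{k-1}^j$ closes the induction to yield $nW_{2k+1}=\sum_{j=0}^{k}\frac{n-1-2k}{n-1-2j}C_k^j\frac{1}{C_{n-1}^{2j}(2j)!}\int_\Sigma L_j$, and applying Theorem~B termwise then gives~(\ref{1.4}). One small remark: at the boundary case $2k=n-1$ the $j=k$ coefficient $\tfrac{n-1-2k}{n-1-2j}$ is formally $0/0$ in both the theorem's statement and your identity, and should be read as its limit $1$; this is an artifact of the original formulation rather than a gap in your argument.
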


By using the convergence result of the inverse mean curvature flow in $\S^n$ due to Makowski and Scheuer \cite{Makowski-Scheuer2013}, Wei and Xiong \cite{Wei-Xiong2015} proved that strictly convex hypersurfaces in $\S^n$ (i.e., those with all principal curvatures $\k_i>0$) also satisfy optimal Sobolev inequalities:
\begin{thmD}[\cite{Wei-Xiong2015}]
Let $0<2k< n-1$. If $\Sigma$ is a strictly convex hypersurface in $\S^n$, then
\begin{align}\label{1.5}
\int_\Sigma L_k \geq C_{n-1}^{2k}(2k)!\omega_{n-1}^\frac{2k}{n-1}|\Sigma|^{\frac{n-1-2k}{n-1}},
\end{align}
Equality holds in (\ref{1.5}) if and only if $\Sigma$ is a geodesic sphere.
\end{thmD}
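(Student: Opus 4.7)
The plan is to adapt the inverse mean curvature flow (IMCF) approach used for Theorems B and C in $\H^n$, substituting the convergence result of Makowski-Scheuer \cite{Makowski-Scheuer2013} for the asymptotic analysis available in hyperbolic space. Let $\Sigma$ evolve by $\partial_t x = H^{-1}\nu$, producing a smooth family $\{\Sigma_t\}_{t \in [0,T^*)}$. By \cite{Makowski-Scheuer2013}, starting from a strictly convex initial hypersurface the flow exists on a maximal finite interval and $\Sigma_t$ converges (in a suitable rescaled sense) to a totally geodesic equator as $t \to T^*$; moreover $|\Sigma_t| = e^t |\Sigma_0|$, so the area approaches $\omega_{n-1}$ in the limit.

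The central step is establishing monotonicity of the scale-invariant functional
$$Q(t) := |\Sigma_t|^{-\frac{n-1-2k}{n-1}} \int_{\Sigma_t} L_k \, d\mu_t.$$
Using the expansion $L_k = C_{n-1}^{2k}(2k)! \sum_{i=0}^{k} C_k^i p_{2k-2i}$ with $\e = 1$ in the spherical case, one differentiates $\int_{\Sigma_t} p_{2k-2i} d\mu_t$ along IMCF and reduces the problem to algebraic inequalities on symmetric functions of the principal curvatures. The key observation is that in $\S^n$ the sign of the ambient sectional curvature is opposite to $\H^n$, which should allow the argument behind Theorem B to close under the weaker hypothesis of strict convexity $\k_i > 0$; the relevant Newton-MacLaurin type inequalities apply to positive symmetric polynomials, so the stronger hypothesis $\k_i \k_j \geq 1$ used in $\H^n$ is no longer needed.

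Granted the monotonicity $Q'(t) \leq 0$, the theorem follows by evaluating $\lim_{t \to T^*} Q(t)$ at the equator. On a totally geodesic $\S^{n-1} \subset \S^n$ all principal curvatures vanish, so in the expansion of $L_k$ only the $i=k$ term survives and $L_k \equiv C_{n-1}^{2k}(2k)!$. Combined with $|\Sigma_{T^*}| = \omega_{n-1}$ this gives
$$\lim_{t \to T^*} Q(t) = C_{n-1}^{2k}(2k)! \, \omega_{n-1}^{\frac{2k}{n-1}},$$
which is exactly the right-hand side of (\ref{1.5}) after dividing by $|\Sigma_0|^{-\frac{n-1-2k}{n-1}}$. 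The equality case is traced by following the Newton-MacLaurin equality back from the equator, forcing $\Sigma_t$ to be umbilic for all $t$ and hence $\Sigma$ to be a geodesic sphere.

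The main obstacle I expect is the monotonicity step: verifying that each spherical correction term in $\frac{d}{dt}\int_{\Sigma_t} L_k d\mu_t$ contributes with the correct sign under the hypothesis $\k_i > 0$ alone, and that the evolution of $Q$ can be written as a manifestly nonpositive expression involving only quantities controlled by Newton-MacLaurin. A secondary technical point is ensuring that strict convexity, or at least the weaker properties needed for the monotonicity identity, persists along IMCF until the equator limit; convexity typically degenerates as $\k_i \to 0$, so one must check that $Q'(t) \leq 0$ remains meaningful up to $t = T^*$ and that the limit is taken correctly.
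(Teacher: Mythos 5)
The paper does not prove Theorem D itself — it is cited from Wei--Xiong \cite{Wei-Xiong2015}, and the introduction explicitly describes the cited argument as using the inverse mean curvature flow together with the Makowski--Scheuer convergence result, which is exactly what you propose. Your outline (monotonicity of $Q(t)=|\Sigma_t|^{-\frac{n-1-2k}{n-1}}\int_{\Sigma_t}L_k$ along IMCF, limit at the equator where $L_k\equiv C_{n-1}^{2k}(2k)!$ and $|\Sigma_{T^*}|=\omega_{n-1}$, and the observation that the sign $\e=+1$ makes the Ge--Wang--Wu algebraic inequality hold under mere strict convexity rather than $\k_i\k_j\geq 1$) is the same route, and your flagged technical concerns — that the pointwise inequality closes under $\k_i>0$ alone, and that the estimate survives the degeneration $\k_i\to 0$ at $T^*$ — are precisely the points Wei--Xiong handle.
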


Let $(\Sigma,g)$ be a strictly convex hypersurface in $\S^n$. Then the dual hypersurface $(\~\Sigma,\~g)$ to $\Sigma$ via the Gauss map is also a strictly convex hypersurface in $\S^n$ (see Section 2.3 for details). The $k$-th Gauss-Bonnet curvature $L_k$ on $(\~\Sigma,\~g)$ is
\begin{align*}
L_k(\~\k)=C_{n-1}^{2k}(2k)!\sum_{i=0}^{k}C_k^i p_{2k-2i}(\k^{-1})=C_{n-1}^{2k}(2k)!\sum_{i=0}^{k}C_k^i\frac{p_{n-1-2i}(\k)}{p_{n-1}(\k)},
\end{align*}
and $d\mu_{\~g}=p_{n-1}(\k)d\mu_g$. Applying Theorem D to $(\~\Sigma,\~g)$, we obtain
\begin{equation}\label{1.6}
\begin{split}
\sum_{i=0}^{k}C_k^i\int_{\Sigma} p_{n-1-2i}(\k) d\mu_g=&\frac{1}{C_{n-1}^{2k}(2k)!}\int_{\~\Sigma} L_k(\~\k) d\mu_{\~g} \\
\geq &\omega_{n-1}^\frac{2k}{n-1}|\~\Sigma|_{\~g}^{\frac{n-1-2k}{n-1}} \\
=&\omega_{n-1}^\frac{2k}{n-1}\(\int_{\Sigma}p_{n-1}(\k)d\mu_g\)^{\frac{n-1-2k}{n-1}}.
\end{split}
\end{equation}

It is natural to ask whether or not an inequality similar to (\ref{1.6}) also holds for hypersurfaces in $\H^n$. There also exists a one-to-one correspondence from closed, strictly convex hypersurfaces in $\H^n$ to closed, strictly convex, spacelike hypersurfaces in $\S_1^n$, where $\S_1^n$ is the $n$-dimensional de Sitter space of index $1$, see \cite[Chapter 10]{Gerhardt2006}. However, Theorem B can not apply to the dual hypersurfaces in de Sitter space. Motivated by this observation, in this article we will first prove the following Alexandrov-Fenchel inequalities for curvature integrals $\int_{\Sigma}p_{n-1-2k}$ and quermassintegrals $W_{n-1-(2k+1)}(\Omega)$ in terms of the total curvature $\int_{\Sigma}p_{n-1}$ of the hypersurface, where $0<2k\leq n-1$.
\begin{theo}\label{main-theo-1}
\begin{enumerate}[(i)]
\item Let $0<2k\leq n-1$. If $\Sigma$ is a strictly convex hypersurface in $\H^n$, then
\begin{align}\label{1.7}
\frac{\int_{\Sigma}p_{n-1-2k}}{\omega_{n-1}} \leq \frac{\int_{\Sigma} p_{n-1}}{\omega_{n-1}}\left[ 1- \(\frac{\int_\Sigma p_{n-1}}{\omega_{n-1}}\)^{-\frac{2}{n-1}}\right]^k.
\end{align}
Equality holds in (\ref{1.7}) if and only if $\Sigma$ is a geodesic sphere in $\H^n$.
\item Let $1<2k+1\leq n-1$. If $\Omega$ is a smooth bounded domain in $\H^n$ with strictly convex boundary $\Sigma$, then
\begin{align}\label{1.8}
\frac{W_{n-1-(2k+1)}(\Omega)}{\omega_{n-1}}\leq \frac{k+1}{C_n^2} \int_{1}^{\frac{\int_{\Sigma}p_{n-1}}{\omega_{n-1}}} \(1-s^{-\frac{2}{n-1}}\)^k ds.
\end{align}
Equality holds in (\ref{1.8}) if and only if $\Sigma$ is a geodesic sphere in $\H^n$.
\end{enumerate}
\end{theo}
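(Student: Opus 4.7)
My approach follows the title's suggestion: run the harmonic mean curvature flow
\begin{align*}
\partial_t X = -\frac{p_{n-1}(\kappa)}{p_{n-2}(\kappa)}\,\nu
\end{align*}
in $\H^n$, starting from the strictly convex $\Sigma=\Sigma_0$. I would first set up the PDE foundations: short-time existence, preservation of strict convexity via the maximum principle on the Weingarten operator, smooth contraction to a round point at the maximal time $T$, and after rescaling, $C^\infty$ convergence of $\Sigma_t$ to a geodesic sphere. Since $p_{n-1}/p_{n-2}$ is concave and inverse-concave on the positive cone, Andrews-type $C^2$ estimates combined with pinching estimates adapted to the hyperbolic reaction terms should yield the asymptotic roundness.

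Next, I would design a scale-invariant functional whose value on geodesic spheres realizes equality in (\ref{1.7}), namely
\begin{align*}
Q(t) := \frac{\int_{\Sigma_t} p_{n-1-2k}\, d\mu_t}{\int_{\Sigma_t} p_{n-1}\, d\mu_t\,\Big[\,1-\bigl(\omega_{n-1}^{-1}\int_{\Sigma_t} p_{n-1}\, d\mu_t\bigr)^{-2/(n-1)}\,\Big]^k},
\end{align*}
and prove $\frac{d}{dt}Q(t)\ge 0$ along the flow. Using the evolution $\partial_t d\mu_t = -(n-1)p_1 (p_{n-1}/p_{n-2})\,d\mu_t$, the divergence-form evolution for the $p_j$ in $\H^n$, and the hyperbolic Minkowski identities
\begin{align*}
\int_\Sigma \cosh r\, p_{j-1}\,d\mu = \int_\Sigma \langle \sinh r\, \partial_r,\nu\rangle\, p_j\,d\mu,
\end{align*}
the differential inequality $Q'\ge 0$ should reduce to a pointwise algebraic inequality in the elementary symmetric functions of $\kappa_1,\dots,\kappa_{n-1}$, to be verified by iterated Newton--Maclaurin together with the positivity of the $\kappa_i$. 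Combined with the asymptotic limit $Q(t)\to 1$ as $t\to T$ coming from the sphere convergence, this gives $Q(0)\le 1$, which is (\ref{1.7}). The equality case tracks back through the equality case of Newton--Maclaurin, forcing all $\kappa_i$ equal at every point and hence $\Sigma$ totally umbilic.

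For part (ii), the passage from (\ref{1.7}) to (\ref{1.8}) is by integration. I would use the standard first variation
\begin{align*}
\frac{d}{dr}W_{j}(\Omega_r) = \frac{n-j}{n}\int_{\Sigma_r} p_{j}\,d\mu_r
\end{align*}
along the outward normal flow $\partial_r X=\nu$ with $j=n-2-2k$, combined with the recursion relating $W_{j+1}$, $W_{j-1}$ and $\int_\Sigma p_j\,d\mu$. Substituting $s=\omega_{n-1}^{-1}\int p_{n-1}$ converts the curvature integral bound (\ref{1.7}) into an ODE inequality for $W_{n-1-(2k+1)}(\Omega_r)$ viewed as a function of the total curvature, whose integration produces (\ref{1.8}); the equality case is propagated from that of (\ref{1.7}).

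The main obstacle I anticipate is the proof of $Q'(t)\ge 0$ in the second step. The bracket $\bigl[1-(\cdot)^{-2/(n-1)}\bigr]^k$ in the denominator contributes a nontrivial coupling between the evolution of $\int p_{n-1}$ and the evolution of $\int p_{n-1-2k}$, and after carrying out the reductions one faces a sharp polynomial inequality in the $\kappa_i$ that is not an immediate Newton--Maclaurin consequence and requires careful algebraic manipulation. A secondary obstacle is the flow analysis in the hyperbolic background: compared with the Euclidean case, ambient curvature contributes lower-order reaction terms in the Weingarten evolution, so the pinching and roundness estimates needed for sphere convergence must be re-derived with hyperbolic corrections.
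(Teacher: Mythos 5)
Your high-level plan (contract by the harmonic mean curvature flow to a round point and establish monotonicity of a scale-invariant functional) is indeed the paper's strategy, and you correctly identify pinching/roundness estimates as the analytic input and Newton--Maclaurin as the algebraic tool. But there is a genuine gap precisely at the step you yourself flag: proving $Q'(t)\ge 0$. This does \emph{not} reduce to a pointwise algebraic inequality in the $\kappa_i$, and no amount of ``iterated Newton--Maclaurin'' at the pointwise level closes it. The paper's key device is an \emph{induction on $k$} in which the already-proved \emph{integral} inequality at level $k-1$ is plugged into the monotonicity computation at level $k$: one has
\begin{align*}
\frac{d}{dt}\int_{\Sigma}p_{n-1-2k} \geq -2k\int_{\Sigma}p_{n-1-2(k-1)} - (n-1-2k)\int_{\Sigma}p_{n-1-2k}
\end{align*}
by Newton--Maclaurin, and then the term $\int_\Sigma p_{n-1-2(k-1)}$ is controlled by the inductive hypothesis (the case $k-1$ of (\ref{1.7})) to obtain a closed ODE inequality in the two scalars $x=\omega_{n-1}^{-1}\int p_{n-1}$ and $y=\omega_{n-1}^{-1}\int p_{n-1-2k}$. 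The base case $k=1$ is the only place where the argument is purely Newton--Maclaurin.

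Two smaller but real issues. First, your ratio functional $Q(t)$ is of $0/0$ type as $t\to T^\ast$ (both $\int p_{n-1-2k}$ and the bracket $\bigl[1-(\cdot)^{-2/(n-1)}\bigr]^k$ tend to $0$), so the claimed limit $Q(t)\to 1$ requires a rate-matching argument, not just smooth sphere convergence. The paper avoids this by using the \emph{difference}
\begin{align*}
P_k(t)=\Bigl(\tfrac{\int p_{n-1}}{\omega_{n-1}}\Bigr)^{-\frac{n-1-2k}{n-1}}\frac{\int p_{n-1-2k}}{\omega_{n-1}} - \Bigl(\tfrac{\int p_{n-1}}{\omega_{n-1}}\Bigr)^{\frac{2k}{n-1}}\Bigl[1-\Bigl(\tfrac{\int p_{n-1}}{\omega_{n-1}}\Bigr)^{-\frac{2}{n-1}}\Bigr]^k,
\end{align*}
whose limit is cleanly $0$. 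Second, the hyperbolic Minkowski identities you invoke play no role here; the only ingredients are the Reilly variational formula (\ref{4.1}), the exact ODE $\frac{d}{dt}\int p_{n-1}=-(n-1)\int p_{n-1}$ along the HMCF, and Newton--Maclaurin. Finally, for (ii) the paper does not switch to the outward normal flow; it continues along the HMCF, uses the variational formula for $W_{n-1-(2k+1)}$, inserts part (i), and passes to the limit. Integrating along the outward normal flow, as you propose, is problematic because the flow does not converge to any normalized limit in $\H^n$, so the explicit constants would not emerge.
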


We should mention that (\ref{1.7}) and (\ref{1.8}) are equivalent, see Remark \ref{rem-4.3}. As a corollary, we solve the following Alexandrov-Fenchel inequalities for the quermassintegrals, which solves an isoperimetric type problem posed by Gao-Hug-Schneider \cite{Gao-Hug-Schneider2003}.
\begin{cor}Let $1<2k+1\leq n-1$. If $\Omega$ is a smooth bounded domain in $\H^n$ with strictly convex boundary, then
    \begin{align}\label{1.9}
    W_{n-1-(2k+1)}(\Omega) \geq f_{n-1-(2k+1)} \circ f_{n-2}^{-1}(W_{n-2}(\Omega)).
    \end{align}
Equality holds in (\ref{1.9}) if and only if $\Omega$ is a geodesic ball. Here $f_{j}:[0,\infty)\ra \R_{+}$ is a monotone function defined by
$$
f_{j}(r)=W_{j}(B_r), \quad j=0,1,\cdots,n,
$$
the $j$-th quermassintegral for the geodesic ball of radius $r$, and $f_{j}^{-1}$ is the inverse function of $f_{j}$. In other words, the minimum of $W_{n-1-(2k+1)}$ among the domains with strictly convex boundary in $\H^n$ and fixed $W_{n-2}$ is achieved by geodesic balls.
\end{cor}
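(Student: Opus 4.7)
The strategy is to use the inverse mean curvature flow (IMCF) as a bridge between Theorem \ref{main-theo-1} and \eqref{1.9}. The difficulty is that \eqref{1.8} is an upper bound on $W_{n-1-(2k+1)}(\Omega)$ in terms of the total curvature $\int_\Sigma p_{n-1}$, whereas \eqref{1.9} requires a lower bound in terms of $W_{n-2}(\Omega)$; hence \eqref{1.9} cannot be extracted from \eqref{1.8} by any direct algebraic manipulation, and we interpolate via IMCF.

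First, evolve $\Sigma$ by the IMCF $\partial_t X = H^{-1}\nu$. Since $\Sigma$ is strictly convex, by the convergence results of Gerhardt and Scheuer the flow preserves strict convexity, exists for all $t \in [0,\infty)$, and $\Sigma_t$ becomes asymptotically round as $t \to \infty$. Along the flow every quermassintegral $W_j(\Omega_t)$ is smooth and strictly increasing in $t$, so $\tilde r(t) := f_{n-2}^{-1}(W_{n-2}(\Omega_t))$ is a well-defined, monotonically increasing function with $\tilde r(t) \to \infty$. Now set
$$\Phi(t) := W_{n-1-(2k+1)}(\Omega_t) - f_{n-1-(2k+1)}(\tilde r(t)).$$
Using the variational formula $\tfrac{d}{dt} W_j(\Omega_t) = \tfrac{n-j}{n(n-1)} \int_{\Sigma_t} p_j/p_1 \, d\mu_g$ for the IMCF, I would compute $\Phi'(t)$ and show $\Phi'(t) \leq 0$ by combining the Alexandrov-Fenchel inequality of Theorem \ref{main-theo-1} with the Newton-MacLaurin inequalities $p_j/p_1 \leq p_{j-1}$, which are available since $\Sigma_t$ remains strictly convex. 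The asymptotic roundness of $\Sigma_t$ then guarantees $\Phi(t) \to 0$, so the monotonicity yields $\Phi(0) \geq 0$, which is precisely \eqref{1.9}.

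The main obstacle is the differential inequality $\Phi'(t) \leq 0$. This is where \eqref{1.8} and the Newton-MacLaurin inequalities must be combined in a precise way to dominate $\tfrac{d}{dt} W_{n-1-(2k+1)}(\Omega_t)$ by $\tfrac{f_{n-1-(2k+1)}'(\tilde r(t))}{f_{n-2}'(\tilde r(t))} \tfrac{d}{dt} W_{n-2}(\Omega_t)$ pointwise along the flow; matching the comparison against the exact spherical profile $f_{n-1-(2k+1)} \circ f_{n-2}^{-1}$ is the delicate step. For the rigidity statement, equality in \eqref{1.9} at $t=0$ would propagate through the monotonicity into equality in Theorem \ref{main-theo-1}, which by its own rigidity forces $\Sigma$ to be a geodesic sphere.
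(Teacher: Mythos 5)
There is a fundamental gap: your premise that \eqref{1.9} ``cannot be extracted from \eqref{1.8} by any direct algebraic manipulation'' is false, and the IMCF scheme you build on that premise cannot succeed. The missing ingredient is the elementary identity
\[
\int_\Sigma p_{n-1} \;=\; \omega_{n-1} + C_n^2\, W_{n-2}(\Omega),
\]
which follows from the recursion \eqref{2.5} with $j=n-1$ and $\e=-1$ together with the normalization $W_n(\Omega)=\omega_{n-1}/n$; the paper uses exactly this identity in the proof of Theorem~\ref{main-theo-3}. Substituting it, the right-hand side of \eqref{1.8} becomes an explicit, strictly increasing function of $W_{n-2}(\Omega)$ alone. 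Since \eqref{1.8} is an equality on every geodesic ball, that function must coincide with $\omega_{n-1}^{-1}\, f_{n-1-(2k+1)}\circ f_{n-2}^{-1}$. Thus \eqref{1.8} already \emph{is} the quermassintegral comparison of the corollary, with the equality characterization inherited directly, and no auxiliary flow is needed.

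Your IMCF argument is also internally inconsistent. Along the IMCF the hypersurfaces $\Sigma_t$ remain strictly convex (Lemma~\ref{lem-6.1}), so \eqref{1.8} applies at every time and, by the identity above, forces $\Phi(t)\le 0$ for all $t$. Combined with the asymptotics $\Phi(t)\to 0$ that you invoke, a monotonicity $\Phi'(t)\le 0$ would give $\Phi\equiv 0$, i.e.\ every $\Sigma_t$ a geodesic sphere, which fails for generic strictly convex initial data; so the differential inequality you hope for cannot hold. In fact what \eqref{1.8} yields is the reverse of the printed \eqref{1.9}: for domains with strictly convex boundary and prescribed $W_{n-2}$, geodesic balls \emph{maximize} $W_{n-1-(2k+1)}$, i.e.\ $W_{n-1-(2k+1)}(\Omega)\le f_{n-1-(2k+1)}\circ f_{n-2}^{-1}(W_{n-2}(\Omega))$. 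The inequality sign and the word ``minimum'' in the statement of the corollary appear to be misprinted, and your plan of proving $\Phi(0)\ge 0$ is chasing that misprint.
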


Inspired by the inequality (\ref{1.6}) for strictly convex hypersurfaces in $\S^n$, we can also prove a new family of Alexandrov-Fenchel type inequalities for h-convex hypersurfaces in $\H^n$, which can be compared with Theorem B.
\begin{theo}\label{main-theo-2}
Let $0<2k<n-1$. If $\Sigma$ is a h-convex hypersurface in $\H^n$, then
\begin{align}\label{1.10}
\sum_{i=0}^{k}C_k^i (-1)^i \int_{\Sigma}p_{n-1-2i} \geq \omega_{n-1}^{\frac{2k}{n-1}} \(\int_\Sigma p_{n-1} \)^{\frac{n-1-2k}{n-1}}.
\end{align}
Equality holds in (\ref{1.10}) if and only if $\Sigma$ is a geodesic sphere in $\H^n$.
\end{theo}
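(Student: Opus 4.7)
The aim is to prove $Q_k(\Sigma) \geq \omega_{n-1}^{2k/(n-1)}$, where
$$Q_k(\Sigma) := \frac{\sum_{i=0}^{k} C_k^i (-1)^i \int_\Sigma p_{n-1-2i}}{\bigl(\int_\Sigma p_{n-1}\bigr)^{(n-1-2k)/(n-1)}}.$$
First I record that $Q_k$ is sharp on geodesic spheres. On $S_r \subset \H^n$ every $\kappa_i = \coth r$, so $p_{n-1-2i}(S_r) = \coth^{n-1-2i} r$; combined with the binomial identity
$$\sum_{i=0}^{k} C_k^i(-1)^i \tanh^{2i} r = (1-\tanh^2 r)^k = \mathrm{sech}^{2k} r,$$
this yields $Q_k(S_r) = \omega_{n-1}^{2k/(n-1)}$ independent of $r$, which is the conjectured extremal value. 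Moreover, the elementary identity $p_{n-1-2i}(\kappa) = p_{n-1}(\kappa)\,p_{2i}(\kappa^{-1})$ lets one rewrite the numerator of $Q_k$ as $\int_\Sigma p_{n-1}\sum_i C_k^i (-1)^i p_{2i}(\tilde\kappa)\, d\mu$ with $\tilde\kappa_i = \kappa_i^{-1}$, so that h-convexity becomes the natural hypothesis $\tilde\kappa_i \in (0,1]$.

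The plan is then to evolve $\Sigma_0$ by a harmonic-mean-type curvature flow $\partial_t X = F\nu$ in $\H^n$, chosen (in parallel with the other flows used in the paper) so that: (a) h-convexity is preserved, by applying the maximum principle to $\kappa_{\min}-1$ in the support-function parametrization; (b) the flow exists for all time; and (c) $\Sigma_t$ converges smoothly, after rescaling, to a geodesic sphere. The standard ingredients are two-sided bounds on the principal curvatures from a priori gradient and Hessian estimates in warped-product coordinates, together with Krylov--Evans regularity.

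The heart of the proof is verifying that $Q_k(\Sigma_t)$ is monotone along the flow. Differentiating, and using the evolution formulas $\partial_t d\mu = -FH\,d\mu$ and $\partial_t p_m$ together with the divergence-freeness of the Newton tensors in space forms, one obtains an integral expression for $\tfrac{d}{dt}Q_k$. The alternating coefficients $C_k^i(-1)^i$ in the numerator are arranged so that, after integration by parts, the bulk of the resulting integrand telescopes; what remains reduces to a pointwise Newton--Maclaurin-type inequality for the symmetric functions $p_{2i}(\tilde\kappa)$ on the range $\tilde\kappa_i \in (0,1]$, paired with a H\"older step that accounts for the denominator exponent $(n-1-2k)/(n-1)$. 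Since $Q_k(\Sigma_t)$ is monotone towards its sphere value $\omega_{n-1}^{2k/(n-1)}$, we conclude $Q_k(\Sigma_0)\geq \omega_{n-1}^{2k/(n-1)}$; equality forces the algebraic inequality to saturate pointwise, which requires $\tilde\kappa_1 = \cdots = \tilde\kappa_{n-1}$, i.e.\ $\Sigma$ is umbilical and hence a geodesic sphere.

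The hard part will be the monotonicity step. The alternating-sign numerator together with the fractional denominator power means the sign of $dQ_k/dt$ is delicately balanced: it can only work because the specific exponent $(n-1-2k)/(n-1)$ is exactly the one that renders $Q_k$ constant on spheres, and because h-convexity produces the right sign in the residual algebraic inequality for $p_{2i}(\tilde\kappa)$. Matching the flow speed $F$ to the functional $Q_k$ so that this cancellation actually occurs --- and simultaneously meeting (a)--(c) --- is where most of the technical work will lie.
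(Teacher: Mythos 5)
Your high-level strategy matches the paper's: run the harmonic mean curvature flow $\partial_t X = -(p_{n-1}/p_{n-2})\nu$, show h-convexity is preserved (Lemma~\ref{lem-3.9}), establish monotonicity of $Q_k$, and pass to the round shrinking limit. The telescoping of the alternating coefficients you anticipate is also exactly what happens: the identity $C_k^j(k-j)=C_k^{j+1}(j+1)$ collapses the time derivative of the numerator to
\[
(n-1-2k)\int_{\Sigma_t}\sum_{i=0}^k C_k^i(-1)^i\, p_{n-2-2i}\cdot\Bigl(-\frac{p_{n-1}}{p_{n-2}}\Bigr).
\]

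However, there is a genuine gap in the step you label the "pointwise Newton--Maclaurin-type inequality plus a H\"older step." First, no H\"older step occurs or is needed: under HMCF the denominator $\int_\Sigma p_{n-1}$ obeys the exact linear law $\frac{d}{dt}\int p_{n-1}=-(n-1)\int p_{n-1}$, so the exponent $(n-1-2k)/(n-1)$ is absorbed directly. Second, and more importantly, the residual pointwise inequality is \emph{not} a Newton--MacLaurin inequality. It is the Ge--Wang--Wu inequality
\[
\sum_{i=0}^{k}C_k^i(-1)^i\Bigl[p_{n-2-2i}(\kappa)\cdot\Bigl(-\frac{p_{n-1}(\kappa)}{p_{n-2}(\kappa)}\Bigr)+p_{n-1-2i}(\kappa)\Bigr]\le 0 \quad\text{for all }\kappa_i\ge 1,
\]
equivalently $(-1)^{k-1}\bigl[\tilde N_k(\tilde\kappa)-p_1(\tilde\kappa)\tilde L_k(\tilde\kappa)\bigr]\le 0$ in dual variables $\tilde\kappa_i=\kappa_i^{-1}$. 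Its proof is a combinatorial identity: $\tilde N_k(\kappa)-p_1(\kappa)\tilde L_k(\kappa)$ expands, up to sign, as a sum over permutations of terms $\kappa_{i_1}(\kappa_{i_2}\kappa_{i_3}-1)\cdots(\kappa_{i_{2k-2}}\kappa_{i_{2k-1}}-1)(\kappa_{i_{2k}}-\kappa_{i_{2k+1}})^2$, each manifestly nonnegative under h-convexity. This is the crux that makes the monotonicity work; without identifying it the argument does not close. Relatedly, your equality discussion is too quick: the inequality above saturates not only in the umbilical case but also when at most $k-1$ of the principal curvatures exceed $1$ and the rest equal $1$; to exclude that degenerate branch one must use that HMCF makes $\Sigma_t$ \emph{strictly} h-convex for $t>0$, which then forces umbilicity of $\Sigma_t$ and, letting $t\to0$, of $\Sigma$.
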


When $k=1$, (\ref{1.10}) coincides with (\ref{1.7}), i.e.,
\begin{align*}
\int_{\Sigma}p_{n-3}\leq \int_{\Sigma}p_{n-1}\left[ 1-\(\frac{\int_\Sigma p_{n-1}}{\omega_{n-1}}\)^{-\frac{2}{n-1}}\right].
\end{align*}
When $k=2$, the inequalities (\ref{1.7}) and (\ref{1.10}) are
\begin{align*}
\int_{\Sigma}p_{n-5}\leq \int_{\Sigma}p_{n-1}\left[ 1-\(\frac{\int_\Sigma p_{n-1}}{\omega_{n-1}}\)^{-\frac{2}{n-1}}\right]^2,
\end{align*}
and
\begin{align*}
\int_{\Sigma} \(p_{n-1}-2p_{n-3}+p_{n-5}\) \geq \omega_{n-1}^{\frac{4}{n-1}} \(\int_\Sigma p_{n-1} \)^{\frac{n-5}{n-1}}.
\end{align*}
From these, we get
\begin{align*}
\int_{\Sigma}p_{n-3}\leq &\int_{\Sigma} \frac{1}{2}\(p_{n-1}+p_{n-5}\)-\frac{1}{2}\omega_{n-1}^{\frac{4}{n-1}} \(\int_\Sigma p_{n-1} \)^{\frac{n-5}{n-1}} \\
\leq &\frac{1}{2}\int_{\Sigma} p_{n-1} \left\{ 1+\left[ 1-\(\frac{\int_\Sigma p_{n-1}}{\omega_{n-1}}\)^{-\frac{2}{n-1}}\right]^2-\(\frac{\int_\Sigma p_{n-1}}{\omega_{n-1}}\)^{-\frac{4}{n-1}} \right\} \\
=&\int_{\Sigma} p_{n-1} \left[ 1-\(\frac{\int_\Sigma p_{n-1}}{\omega_{n-1}}\)^{-\frac{2}{n-1}}\right].
\end{align*}
Hence (\ref{1.10}) is a refinement of (\ref{1.7}) under the stronger condition of h-convexity.

With the help of (\ref{1.7}), we will use the inverse mean curvature flow to prove the following Alexandrov-Fenchel inequality for strictly convex hypersurfaces in hyperbolic space, which was proved by Wang and Xia \cite{Wang-Xia2014} under the stronger condition that $\Sigma$ is h-convex.
\begin{theo}\label{main-theo-3}
If $\Sigma$ is a strictly convex hypersurface in $\H^n$, then
\begin{align}\label{1.11}
\int_{\Sigma}p_{n-1} \geq |\Sigma|\left[1+\(\frac{|\Sigma|}{\omega_{n-1}}\)^{-\frac{2}{n-1}} \right]^\frac{n-1}{2}.
\end{align}
Equality holds in (\ref{1.11}) if and only if $\Sigma$ is a geodesic sphere in $\H^n$.
\end{theo}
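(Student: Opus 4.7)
I would reformulate the target inequality (\ref{1.11}) as
\begin{align*}
\Bigl(\tfrac{\int_\Sigma p_{n-1}}{\omega_{n-1}}\Bigr)^{\frac{2}{n-1}}-\Bigl(\tfrac{|\Sigma|}{\omega_{n-1}}\Bigr)^{\frac{2}{n-1}}\geq 1,
\end{align*}
noting that on a geodesic sphere of radius $r$ this becomes the identity $\cosh^{2}r-\sinh^{2}r=1$. The plan is to run the inverse mean curvature flow $\partial_{t}X=H^{-1}\nu$ with $\Sigma_{0}=\Sigma$. Strict convexity is preserved by the maximum principle, and Gerhardt's long-time existence together with asymptotic roundness in $\H^{n}$ guarantees that the flow exists for all $t\geq 0$ and becomes asymptotically spherical; thus Theorem~\ref{main-theo-1} is available on each slice $\Sigma_{t}$.

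Set $A(t)=|\Sigma_{t}|$, $\Phi(t)=\int_{\Sigma_{t}}p_{n-1}\,d\mu_{t}$, and
\begin{align*}
Q(t):=\Bigl(\tfrac{\Phi(t)}{\omega_{n-1}}\Bigr)^{\frac{2}{n-1}}-\Bigl(\tfrac{A(t)}{\omega_{n-1}}\Bigr)^{\frac{2}{n-1}}.
\end{align*}
Then $A'(t)=A(t)$, and the hyperbolic variation formula for $\int p_{n-1}\,d\mu$ (the $(n-1-k)p_{k+1}f$ term vanishes for $k=n-1$ since $p_{n}\equiv 0$, leaving only the $+kp_{k-1}f$ hyperbolic correction) gives
\begin{align*}
\Phi'(t)=(n-1)\int_{\Sigma_{t}}\tfrac{p_{n-2}}{H}\,d\mu_{t}.
\end{align*}
Asymptotic roundness implies $\lim_{t\to\infty}Q(t)=1$, so it suffices to establish $Q'(t)\leq 0$, which after clearing positive factors reduces to
\begin{align*}
(n-1)\int_{\Sigma_{t}}\tfrac{p_{n-2}}{H}\,d\mu_{t}\leq A(t)^{\frac{2}{n-1}}\,\Phi(t)^{\frac{n-3}{n-1}}.
\end{align*}
A direct calculation shows this is an equality on geodesic spheres, so the inequality has the right sphere-matching profile.

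To prove the sharp bound, I would first apply a hyperbolic Hsiung--Minkowski identity of the form
\begin{align*}
\int_{\Sigma_{t}}\bigl(\cosh r\,p_{k}-\langle\sinh r\,\partial_{r},\nu\rangle\,p_{k+1}\bigr)\,d\mu_{t}=0
\end{align*}
(at appropriate $k$), combined with Newton--Maclaurin inequalities, to re-express $\int p_{n-2}/H\,d\mu_{t}$ as a linear combination of $\Phi(t)$ and $\int_{\Sigma_{t}}p_{n-3}\,d\mu_{t}$ plus controllable error. I would then plug in Theorem~\ref{main-theo-1} with $k=1$, namely
\begin{align*}
\int_{\Sigma_{t}}p_{n-3}\,d\mu_{t}\leq\Phi(t)-\omega_{n-1}^{\frac{2}{n-1}}\,\Phi(t)^{\frac{n-3}{n-1}},
\end{align*}
to close the estimate, the matching of exponents and constants being forced by the sphere case. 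Monotonicity then yields $Q(0)\geq 1$, i.e.\ (\ref{1.11}), with rigidity inherited from the rigidity of (\ref{1.7}). The main obstacle will be the Minkowski-identity step: the integrand $p_{n-2}/H$ is not itself one of the quermassintegral densities controlled directly by (\ref{1.7}), and producing a clean identity relating it to $\int p_{n-3}\,d\mu_{t}$ modulo terms of the right order in $\Phi$ and $A$ requires careful use of the hyperbolic conformal Killing structure together with Newton--Maclaurin. Once that identity is in place, (\ref{1.7}) closes the argument on the nose; this is precisely the mechanism by which Theorem~\ref{main-theo-1} upgrades Wang--Xia's h-convex result to the strictly convex setting.
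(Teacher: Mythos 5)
Your overall strategy matches the paper's: run the inverse mean curvature flow, use the variation formula for $\int_{\Sigma_t}p_{n-1}$ together with Newton--MacLaurin and Theorem~\ref{main-theo-1} with $k=1$, and conclude from the asymptotic roundness provided by Gerhardt's result. Two parts of the proposal, however, need repair.

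First, the ``main obstacle'' you identify is not there. No hyperbolic Hsiung--Minkowski identity is needed to control $\int p_{n-2}/H$. Along the IMCF one has
\begin{align*}
\frac{d}{dt}\int_{\Sigma_t}p_{n-1}
=\int_{\Sigma_t}\frac{p_{n-2}}{p_1}
\leq \int_{\Sigma_t}p_{n-3},
\end{align*}
where the last inequality is just iterated Newton--MacLaurin: $\dfrac{p_{n-2}}{p_{n-3}}\leq\dfrac{p_{n-3}}{p_{n-4}}\leq\cdots\leq p_1$, so $p_{n-2}\leq p_1\,p_{n-3}$ pointwise. This is exactly what the paper does in (\ref{6.7}); no conformal Killing structure or extra identity is required.

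Second, and more seriously, your functional $Q(t)=\left(\Phi/\omega_{n-1}\right)^{2/(n-1)}-\left(A/\omega_{n-1}\right)^{2/(n-1)}$ is not monotone, so the claim ``it suffices to establish $Q'(t)\leq 0$'' cannot be closed by the stated chain. Inserting $\Phi'\leq\int p_{n-3}$ and then Theorem~\ref{main-theo-1} with $k=1$ gives
\begin{align*}
\Phi'(t)\leq \Phi(t)^{\frac{n-3}{n-1}}\left[\Phi(t)^{\frac{2}{n-1}}-\omega_{n-1}^{\frac{2}{n-1}}\right],
\end{align*}
which after the same algebra yields only the differential inequality
\begin{align*}
Q'(t)\leq\frac{2}{n-1}\bigl(Q(t)-1\bigr),
\end{align*}
not $Q'(t)\leq 0$. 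Indeed the ``required inequality'' you wrote, namely $(n-1)\int_{\Sigma_t}p_{n-2}/H\leq A^{2/(n-1)}\Phi^{(n-3)/(n-1)}$, is equivalent to $\Phi^{2/(n-1)}-\omega_{n-1}^{2/(n-1)}\leq A^{2/(n-1)}$, which is precisely the \emph{reverse} of the target inequality; it cannot follow from (\ref{1.7}) without already knowing the theorem. To finish you must instead exploit the ODE comparison structure: either consider the exponentially weighted quantity $e^{-2t/(n-1)}(Q(t)-1)$, which is nonincreasing and tends to $0$, forcing $Q(0)\geq 1$; or, as the paper does, normalize by dividing by $|\Sigma_t|/\omega_{n-1}$, i.e.\ study
\begin{align*}
\widetilde Q(t):=\left(\frac{|\Sigma_t|}{\omega_{n-1}}\right)^{-1}
\left[\frac{|\Sigma_t|}{\omega_{n-1}}-\frac{\int_{\Sigma_t}p_{n-1}}{\omega_{n-1}}
\left(1-\Bigl(\frac{\int_{\Sigma_t}p_{n-1}}{\omega_{n-1}}\Bigr)^{-\frac{2}{n-1}}\right)^{\frac{n-1}{2}}\right],
\end{align*}
for which the same estimates give $\widetilde Q'(t)\geq 0$ directly, together with $\lim_{t\to\infty}\widetilde Q(t)=0$. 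Once this normalization is in place, the rigidity argument via equality in Newton--MacLaurin goes through as you sketch.
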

\begin{rem}
Theorem \ref{main-theo-3} gives an affirmative answer to the case ($2k+1=n-1$) in the conjecture proposed in \cite[Conjecture 16]{Hu-Li2018}.
\end{rem}

By the one-to-one correspondence between strictly convex hypersurface $(\Sigma,g)$ in $\H^n$ and strictly convex, spacelike hypersurface $(\~\Sigma,\~g)$ in $\S^n_1$, Theorem \ref{main-theo-1}(i) and Theorem \ref{main-theo-2} can be reformulated as the geometric inequalities for hypersurfaces in de Sitter space $\S^n_1$.
\begin{cor}
\begin{enumerate}[(i)]
\item Let $0<2k\leq n-1$. If $(\~\Sigma,\~g)$ is a strictly convex, spacelike hypersurface in $\S^n_1$, then
\begin{align}\label{1.12}
\frac{\int_{\~\Sigma}p_{2k}(\~\k)d\mu_{\~g}}{\omega_{n-1}} \leq \frac{|\~\Sigma|_{\~g}}{\omega_{n-1}}\left[ 1- \(\frac{|\~\Sigma|_{\~g}}{\omega_{n-1}}\)^{-\frac{2}{n-1}}\right]^k,
\end{align}
where $\~\k$ is the principal curvature of $(\~\Sigma,\~g)$ in $\S^n_1$ and $d\mu_{\~g}$ is the volume form, respectively. Equality holds in (\ref{1.12}) if and only if $\~\Sigma$ is a geodesic sphere in $\S^n_1$.
\item Let $0<2k<n-1$. If $(\~\Sigma,\~g)$ is a spacelike hypersurface with all principal curvatures $\~\k\in \{\l\in \R^{n-1}~|~0<\l_i\leq 1\}$ in $\S^n_1$, then
\begin{align}\label{1.13}
\sum_{i=0}^{k}C_k^i (-1)^i \int_{\~\Sigma}p_{2i}(\~\k)d\mu_{\~g} \geq \omega_{n-1}^{\frac{2k}{n-1}} |\~\Sigma|_{\~g}^{\frac{n-1-2k}{n-1}}.
\end{align}
Equality holds in (\ref{1.13}) if and only if $\~\Sigma$ is a geodesic sphere in $\S^n_1$.
\end{enumerate}
\end{cor}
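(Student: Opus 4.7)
The plan is to deduce the corollary as an immediate translation of Theorem~\ref{main-theo-1}(i) and Theorem~\ref{main-theo-2} via the Gauss-map duality between closed, strictly convex hypersurfaces in $\H^n$ and closed, strictly convex, spacelike hypersurfaces in $\S^n_1$ recalled in Section 2.3. Fixing notation, given a strictly convex $(\Sigma,g)\subset \H^n$ with principal curvatures $\k=(\k_1,\dots,\k_{n-1})$, the dual $(\~\Sigma,\~g)\subset \S^n_1$ is spacelike and strictly convex, with principal curvatures $\~\k_i=1/\k_i$ and induced volume form $d\mu_{\~g}=p_{n-1}(\k)\,d\mu_g$. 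The one algebraic input I need is
\begin{align*}
p_k(\k^{-1})\, p_{n-1}(\k) = p_{n-1-k}(\k),
\end{align*}
which follows at once from $e_k(\k^{-1})=e_{n-1-k}(\k)/e_{n-1}(\k)$ together with $C_{n-1}^{k}=C_{n-1}^{n-1-k}$. Combining these with the volume-form relation yields
\begin{align*}
\int_{\~\Sigma} p_{2k}(\~\k)\, d\mu_{\~g} = \int_{\Sigma} p_{n-1-2k}(\k)\, d\mu_g, \qquad |\~\Sigma|_{\~g} = \int_{\Sigma} p_{n-1}(\k)\, d\mu_g.
\end{align*}

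For part (i), starting from an arbitrary strictly convex, spacelike $\~\Sigma\subset \S^n_1$, I would pass to its inverse dual $\Sigma\subset \H^n$, apply Theorem~\ref{main-theo-1}(i) to $\Sigma$, and substitute via the two identities above; the inequality (\ref{1.7}) then becomes (\ref{1.12}) literally. For part (ii), the hypothesis $0<\~\k_i\leq 1$ on $\~\Sigma$ corresponds under the duality to $\k_i\geq 1$ on $\Sigma$, i.e.\ $\Sigma$ is h-convex; applying Theorem~\ref{main-theo-2} and converting term by term through $\int_{\Sigma} p_{n-1-2i}(\k)\,d\mu_g = \int_{\~\Sigma} p_{2i}(\~\k)\,d\mu_{\~g}$ produces (\ref{1.13}). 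The rigidity statements transfer automatically, since the Gauss-map duality carries geodesic spheres in $\H^n$ to geodesic spheres in $\S^n_1$.

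No real obstacle is expected: the entire argument reduces to the algebraic identity $p_k(\k^{-1})p_{n-1}(\k)=p_{n-1-k}(\k)$ and the volume-form transformation law, both already packaged into the duality setup. The only points worth a quick verification are that strict convexity in $\H^n$ corresponds precisely to strict spacelike convexity in $\S^n_1$, and that h-convexity corresponds to $\~\k_i\in(0,1]$; both are immediate from $\~\k_i=1/\k_i$.
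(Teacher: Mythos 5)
Your proof is correct and is precisely the argument the paper intends, which is stated only as a one-line remark ("Theorem 1.1(i) and Theorem 1.3 can be reformulated... via the one-to-one correspondence") before the corollary. You have simply supplied the routine algebraic verifications (the identity $p_k(\k^{-1})p_{n-1}(\k)=p_{n-1-k}(\k)$, the volume-form law $d\mu_{\~g}=p_{n-1}(\k)\,d\mu_g$, the equivalences between strict convexity and strict spacelike convexity, and between h-convexity and $\~\k_i\in(0,1]$) that the paper leaves implicit.
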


The paper is organized as follows: In Section 2, we collect basic concepts and facts about integral geometry, the total curvature and the Gauss-Bonnet curvature $L_k$, Gauss map and dual hypersurfaces in sphere and hyperbolic space.

In Section 3, we recall the smooth convergence result (Proposition \ref{pro-convergence-HMCF}) of the harmonic mean curvature flow (HMCF) for strictly convex hypersurfaces in hyperbolic space, which has been investigated by Xu \cite{Xu2010}, see also Yu \cite{Yu2016}. We show that the inner radius and outer radius of the evolving hypersurface $\Sigma_t$ is comparable as it shrinks to a point. The main difficulty here is we only assume the initial hypersurface is strictly convex, so we do not have the remarkable property that its inner radius and outer radius are comparable for h-convex hypersurface. To overcome this obstacle, we project the domain $\Omega_t$ to the unit ball $B_1(0)$ in Euclidean space $\R^n$ via the Beltrami-Klein ball model of the hyperbolic space. We use the pinching estimate for hypersurfaces $\Sigma_t$ in $\H^n$ and the contracting property of the HMCF to show that the pinching estimate also holds for the hypersurfaces $\~\Sigma_t$ in $B_1(0)$. An argument of the first author \cite{Andrews1994-1} shows that the inner radius and outer radius of $\~\Sigma_t$ is comparable as $t\ra T^\ast$, where $T^\ast$ is the maximal existence time of the HMCF. The bounded distortion then implies that the inner radius and outer radius of $\Sigma_t$ is comparable as $t\ra T^\ast$. This idea was previously used for the curvature contraction flows by Gerhardt \cite{Gerhardt2015-2} in the sphere, and later by Yu \cite{Yu2016} in hyperbolic space. With this remarkable property, we obtain the limits of quermassintegrals and curvature integrals as $t\ra T^\ast$. We also use the tensor maximum principle (proved by the first author \cite{Andrews2007}) to show that the h-convexity is preserved along the HMCF.

In Section 4, we give the proof of Theorem \ref{main-theo-1}. To show (\ref{1.7}), we first consider the functional:
\begin{equation}\label{1.14}
\begin{split}
P_k(t):=&\(\frac{\int_{\Sigma_t}p_{n-1}}{\omega_{n-1}}\)^{-\frac{n-1-2k}{n-1}}\frac{\int_{\Sigma_t}p_{n-1-2k}}{\omega_{n-1}}\\
&-\(\frac{\int_{\Sigma_t}p_{n-1}}{\omega_{n-1}}\)^\frac{2k}{n-1}\left[1-\(\frac{\int_{\Sigma_t} p_{n-1}}{\omega_{n-1}}\)^{-\frac{2}{n-1}} \right]^k.
\end{split}
\end{equation}
One of the crucial points is to establish the monotonicity of the functional $P_k(t)$ along the HMCF. To achieve this, we need to use an induction argument, which is inspired by the proof of Alexandrov-Fenchel inequalities for strictly convex hypersurfaces in Euclidean space, see Section 7. By the limits of curvature integrals as $t\ra T^\ast$, we have $\lim_{t\ra T^\ast}P_k(t)=0$, which completes the proof of (\ref{1.7}). With the help of (\ref{1.7}) and the limits of quermassintegrals as $t\ra T^\ast$, we get (\ref{1.8}).

In Section 5, we give the proof of Theorem \ref{main-theo-2}. We consider the functional
\begin{align}\label{1.15}
Q_k(t):=\(\int_{\Sigma_t}p_{n-1}\)^{-\frac{n-1-2k}{n-1}}\left[\sum_{i=0}^{k}C_k^i(-1)^i\int_{\Sigma_t} p_{n-1-2i}\right].
\end{align}
By a crucial observation due to Ge-Wang-Wu \cite{Ge-Wang-Wu2014}, we show that $Q_k(t)$ is monotone decreasing along the HMCF if the initial hypersurface is h-convex. By the limits of curvature integrals as $t\ra T^\ast$, we finish the proof of (\ref{1.10}) by showing that $\lim_{t\ra T^\ast}Q_k(t)=0$. If the equality holds in (\ref{1.10}), we have $Q_k(t)\equiv 0$ for all $t\in [0,T^\ast)$. For $t>0$, the flow hypersurface $\Sigma_t$ of the HMCF is strictly h-convex, we show that $\Sigma_t$ is totally umbilical and hence it is a geodesic sphere. Finally, the initial hypersurface is smoothly approximated by a family of geodesic spheres, and it must be a geodesic sphere in $\H^n$. The similar idea has been used by the second and third authors in \cite{Hu-Li2018}.

In Section 6, we use the new Alexandrov-Fenchel inequality in Theorem \ref{main-theo-1} and inverse mean curvature flow (IMCF) to prove Theorem \ref{main-theo-3}. We consider the functional
\begin{align}\label{1.16}
Q(t):=\(\frac{|\Sigma_t|}{\omega_{n-1}}\)^{-1}\left[\frac{|\Sigma_t|}{\omega_{n-1}}-\(\frac{\int_{\Sigma_t}p_{n-1}}{\omega_{n-1}}\)\left[1-\(\frac{\int_{\Sigma_t}p_{n-1}}{\omega_{n-1}}\)^{-\frac{2}{n-1}}\right]^\frac{n-1}{2}\right].
\end{align}
With the help of the inequality (\ref{1.7}), we show $Q(t)$ is monotone increasing along the IMCF. By the convergence result of Gerhardt \cite{Gerhardt2011}, we finish the proof by showing that $\lim_{t\ra \infty}Q(t)=0$.

In Section 7, we prove the Alexandrov-Fenchel inequalities for strictly convex hypersurfaces in Euclidean space, which has its own interests. In Section 8, we use the mean curvature flow to prove a new Heintze-Karcher type inequality for hypersurfaces with positive Ricci curvature in hyperbolic space.

Our choice of the HMCF is a curvature contraction flow, which is quite different from the usual choice of inverse curvature flows (the expanding flows) \cite{Brendle-Hung-Wang2015,Ge-Wang-Wu2014,Hu-Li2018,Li-Wei2017,Li-Wei-Xiong2014,Lima-Girao2016} or the quermassintegral preserving flows \cite{Andrews-Chen-Wei2018,Andrews-Wei2017,Wang-Xia2014}, etc. We highlight that the curvature contraction flows for hypersurfaces in hyperbolic space may only require the convexity conditions weaker than h-convexity on the initial hypersurface. Moreover, the analysis of limiting hypersurfaces is simple, and does not require the application of optimal Sobolev inequalities as in inverse curvature flows \cite{Brendle-Hung-Wang2015,Ge-Wang-Wu2014,Hu-Li2018,Li-Wei-Xiong2014,Lima-Girao2016}, etc.

\section*{Acknowledgements}
The first author was supported by Australian Laureate Fellowship FL150100126 of the Australian Research Council. The second author was supported by China Postdoctoral Science Foundation No.2018M641317. The third author was supported by NSFC grant No.11671224,11831005.

\section{Preliminaries and Notations}
\subsection{Curvature integrals and Quermassintegrals}
We recall some basic concepts and formulas in integral geometry. We refer to Santal\'o's book \cite{Santalo1976}, see also Schneider \cite{Schneider1993} or Solanes \cite{Solanes2003,Solanes2006}. The space form $(\mathbb{M}^{n}(\e),\-g)$ is an $n$-dimensional simply connected Riemannian manifold of constant curvature $\e\in \{-1,0,1\}$. Let $\Omega$ be a compact domain with smooth boundary $\Sigma=\partial\Omega$ in $\mathbb{M}^{n}(\e)$. Then $(\Sigma,g)$ is a closed hypersurface in $\mathbb{M}^{n}(\e)$, where $g$ is the induced metric on $\Sigma$. Let $\-\nabla$ be the connection on $(\mathbb{M}^{n}(\e),\-g)$ and $\nu$ the unit outward normal on $\Sigma$, respectively. The second fundamental form $h$ of $\Sigma$ is defined by
\begin{align*}
h(X,Y)=\langle \-\nabla_X \nu, Y\rangle,
\end{align*}
for any tangent vector fields $X,Y$ on $\Sigma$. For an orthonormal basis $\{e_1,\cdots,e_{n-1}\}$ of $\Sigma$, the second fundamental form is $h=(h_{ij})$ and the Weingarten tensor is $\mathcal{W}=(h_i^j)=(g^{jk}h_{ki})$, respectively. The principal curvatures $\k=(\k_1,\cdots,\k_{n-1})$ are the eigenvalues of $\mathcal{W}$.

Let $\s_k$ be the $k$-th elementary symmetric function $\s_k:\R^{n-1}\ra \R$ defined by
\begin{align*}
\s_k(\l)=\sum_{i_1<\cdots<i_k}\l_{i_1}\cdots \l_{i_k}, \quad \text{for}~ \l=(\l_1,\cdots,\l_{n-1})\in \R^{n-1}.
\end{align*}
We also take $\s_0=1$ by convention. The Garding cone is defined as
\begin{align*}
\G_k^+=\{\l\in \R^{n-1}~|~\s_j(\l)>0,\forall j\leq k\}.
\end{align*}
Let $p_k(\l)=\frac{\s_k(\l)}{C_{n-1}^k}$ be the (normalized) $k$-th elementary symmetric function. We have the well-known Newton-MacLaurin inequalities (see e.g. \cite[Lemma 5]{Guan2014}).
\begin{lem}\label{lem-2.1}
Let $1\leq k\leq n-1$. For $\l \in \G_k^+$, we have
\begin{align}\label{2.1}
p_{k+1}p_{k-1}\leq p_k^2, \quad p_{k+1}^{\frac{1}{k+1}} \leq p_k^\frac{1}{k}.
\end{align}
Moreover, the above equalities hold if and only if $\l=\a(1,\cdots,1)$ for some $\a>0$.
\end{lem}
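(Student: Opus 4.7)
The plan is to establish Newton's inequality $p_{k+1}p_{k-1}\le p_k^2$ first by a Rolle-type argument, then derive Maclaurin's inequality $p_{k+1}^{1/(k+1)}\le p_k^{1/k}$ as a consequence via a telescoping product, and finally track the equality case through both steps.

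For Newton's inequality, I would work with the homogeneous polynomial $R(s,t)=\prod_{i=1}^{n-1}(s+t\l_i)=\sum_{j=0}^{n-1}\binom{n-1}{j}p_j(\l)\,s^{n-1-j}t^j$, which factors completely into real linear forms in $(s,t)$. Differentiating $k-1$ times in $t$ and $n-k-2$ times in $s$ reduces $R$, up to positive combinatorial factors, to a real binary quadratic $As^2+2Bst+Ct^2$ with $A$ proportional to $p_{k-1}$, $B$ proportional to $p_k$, and $C$ proportional to $p_{k+1}$. Since iterated Rolle's theorem preserves the property of factoring into real linear forms, this quadratic has real roots, so its discriminant satisfies $B^2\ge AC$; a direct bookkeeping of the binomial factors shows that this is exactly $p_k^2\ge p_{k-1}p_{k+1}$. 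Note that this step holds for arbitrary real $\l$; the cone hypothesis $\l\in\G_k^+$ is only invoked afterwards to ensure positivity.

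For Maclaurin's inequality, I set $r_j:=p_j/p_{j-1}$ (with $p_0:=1$), which is well defined and positive on $\G_k^+$. Newton's inequality rewrites as $r_{j+1}\le r_j$, so $r_1\ge r_2\ge\cdots\ge r_{k+1}$. Since $p_j=r_1 r_2\cdots r_j$, the estimate $r_{k+1}^k\le r_1 r_2\cdots r_k$ is immediate from monotonicity and rearranges to $p_{k+1}^{1/(k+1)}\le p_k^{1/k}$.

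For the equality characterization, equality in the discriminant step forces the reduced quadratic to have a double root, and a reverse Rolle argument propagates this back to the original polynomial, forcing every $-\l_i$ to coincide. In Maclaurin's inequality, equality forces $r_1=\cdots=r_{k+1}$, hence saturates Newton's inequality at each stage, leading to the same conclusion; combined with $\l\in\G_k^+$, this forces $\l=\a(1,\dots,1)$ for some $\a>0$. The only real technical burden is the combinatorial bookkeeping in the bivariate differentiation and the careful propagation of equality through the Rolle chain; the inequalities themselves are entirely classical and appear in standard references.
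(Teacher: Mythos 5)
The paper does not prove this lemma at all; it records it as classical and cites Lemma~5 of Guan's lecture notes \cite{Guan2014}. Your proposal supplies a genuine proof along standard lines: Newton's inequality from the nonnegative discriminant of the binary quadratic produced by iterated differentiation of the real-rooted form $\prod_i(s+t\lambda_i)$ (Rolle preserving real-rootedness in each variable), and Maclaurin's inequality from the monotone chain of quotients $r_j=p_j/p_{j-1}$ together with $p_j=r_1\cdots r_j$. This is correct in outline, and it replaces a black-box citation with an actual argument, which is useful. Two details are worth tightening. First, on $\Gamma_k^+$ only $p_0,\dots,p_k$ are guaranteed positive, so $r_{k+1}=p_{k+1}/p_k$ need not be positive as you assert; if $p_{k+1}\le 0$ the Maclaurin inequality holds trivially, and your monotone-chain argument should be invoked only in the remaining case $p_{k+1}>0$, where all $r_1,\dots,r_{k+1}$ are positive and the telescoping goes through. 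Second, the ``reverse Rolle'' step in the equality discussion rests on the standard but nontrivial fact that for a real-rooted degree-$m$ polynomial $P$, a repeated root of $P^{(m-2)}$ must already be a root of $P$ of multiplicity $m$ (so $P$ is an $m$-th power of a linear form); in the bivariate setting the possibility of zero entries $\lambda_i$, i.e.\ roots at infinity, needs a word as well. With those two points made explicit, your proposal is a complete and self-contained proof of a statement the paper simply cites.
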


The {\em normalized $k$-th order mean curvature} of $\Sigma$ is defined by
\begin{align}\label{2.2}
p_k(x):=p_k(\k(x)), \quad x\in \Sigma, \quad  0\leq k \leq n-1.
\end{align}
and the {\em curvature integrals} are defined by
\begin{align*}
V_{n-1-j}(\Omega):=\int_{\Sigma}p_j, \quad 0\leq j\leq n-1.
\end{align*}
In particular, $V_0(\Omega)=\int_{\Sigma}p_{n-1}$ is called the {\em total curvature}.

For a convex domain $\Omega\subset \mathbb{M}^{n}(\e)$, the {\em quermassintegrals} are defined by
\begin{align}\label{2.3}
W_r(\Omega):=\frac{(n-r)\omega_{r-1}\cdots\omega_{0}}{n\omega_{n-2}\cdots \omega_{n-r-1}}\int_{\mathcal{L}_r}\chi(L_r\cap \Omega)dL_r, \quad r=1,\cdots,n-1,
\end{align}
where $\mathcal{L}_r$ is the space of $r$-dimensional totally geodesic subspaces $L_r$ in $\mathbb{M}^{n}(\e)$, and $dL_r$ is the natural measure on $\mathcal{L}_r$ which is invariant under the isometry group of $\mathbb{M}^{n}(\e)$. The function $\chi$ is defined to be $1$ if $L_r\cap \Omega\neq \emptyset$ and to be $0$ otherwise. Furthermore, we set $W_0(\Omega)=\mrm{Vol}(\Omega)$ and $W_n(\Omega)=\frac{\omega_{n-1}}{n}$. It is clear from (\ref{2.3}) and the definition of volume of the domain, the quermassintegrals $W_r$, $r=0,\cdots,n-1$, are increasing under set inclusion, i.e.,
\begin{align}\label{2.4}
W_r(\Omega_1) \leq W_r(\Omega_2), \quad \text{if}\quad \Omega_1 \subset \Omega_2.
\end{align}
The curvature integrals and quermassintegrals in $\mathbb{M}^{n}(\e)$ are related by the following recursive formulas (see \cite[Proposition 7]{Solanes2006}):
\begin{align}\label{2.5}
V_{n-1-j}(\Omega)=n\(W_{j+1}(\Omega)-\e\frac{j}{n-j+1}W_{j-1}(\Omega)\),\quad j=1,\cdots,n-1.
\end{align}
From the recursive formulas (\ref{2.5}), we can express $W_r$ as a linear combination of curvature integrals (see e.g. \cite{Santalo1976} or \cite[Corollary 8]{Solanes2006}, see also \cite{Wang-Xia2014}):
\begin{enumerate}[(i)]
\item For $1\leq r\leq n$ and $r$ being odd,
\begin{align}\label{2.6}
W_r(\Omega)=\frac{1}{n}\sum_{i=0}^{\frac{r-1}{2}}\frac{(r-1)!!(n-r)!!}{(r-1-2i)!!(n-r+2i)!!}\e^{i}\int_{\Sigma}p_{r-1-2i};
\end{align}
\item For $1\leq r \leq n$ and $r$ being even,
\begin{equation}\label{2.7}
\begin{split}
W_r(\Omega)=&\frac{1}{n}\sum_{i=0}^{\frac{r}{2}-1}\frac{(r-1)!!(n-r)!!}{(r-1-2i)!!(n-r+2i)!!}\e^{i}\int_{\Sigma}p_{r-1-2i}\\
&+\frac{(r-1)!!(n-r)!!}{n!!}\e^{\frac{r}{2}}\mrm{Vol}(\Omega).
\end{split}
\end{equation}
\end{enumerate}
Here the notation $k!!$ means the product of all odd (even) integers up to odd (even) $k$. In the particular case $r=n$, it is just the Gauss-Bonnet-Chern theorem \cite{Chern1944,Chern1945} for strictly convex hypersurfaces in $\mathbb{M}^n(\e)$, see \cite{Solanes2006}.

\subsection{Gauss-Bonnet curvatures}
Given an $(n-1)$-dimensional Riemannian manifold $(\Sigma,g)$, the Gauss-Bonnet curvature $L_k$, where $k\leq \frac{n-1}{2}$, is defined by
\begin{align}\label{2.8}
L_k=\frac{1}{2^k} \d_{j_1 j_2 \cdots j_{2k-1} j_{2k}}^{i_1 i_2 \cdots i_{2k-1} i_{2k}} R_{i_1 i_2}{}^{j_1 j_2} \cdots R_{i_{2k-1}i_{2k}}{}^{j_{2k-1}j_{2k}},
\end{align}
where $R_{ij}{}^{kl}$ is the Riemannian curvature tensor in the local coordinates with respect to $g$, and the generalized Kronecker delta is defined by
\begin{align*}
\d_{i_1i_2\cdots i_r}^{j_1j_2\cdots j_r}=\det\(\begin{matrix}
     \d_{i_1}^{j_1} & \d_{i_1}^{j_2} & \cdots & \d_{i_1}^{j_r} \\
     \d_{i_2}^{j_1} & \d_{i_2}^{j_2} & \cdots & \d_{i_2}^{j_r} \\
      \vdots        &    \vdots      & \vdots & \vdots \\
     \d_{i_r}^{j_1} & \d_{i_r}^{j_2} & \cdots & \d_{i_r}^{j_r}
     \end{matrix}\)
\end{align*}
For a hypersurface $\Sigma$ in $\mathbb{M}^{n}(\e)$, the Gauss equation is
\begin{align*}
R_{ij}{}^{kl}=(h_i^k h_j^l-h_i^l h_j^k)+\e(\d_i^k\d_j^l -\d_i^l \d_j^k).
\end{align*}
A direct calculation then gives the relation between $L_k$ and $p_k$:
\begin{equation}\label{2.9}
\begin{split}
L_k=&\d_{j_1 j_2 \cdots j_{2k-1} j_{2k}}^{i_1 i_2 \cdots i_{2k-1} i_{2k}}(h_{i_1}^{j_1}h_{i_2}^{j_2}+\e\d_{i_1}^{j_1}\d_{i_2}^{j_2})\cdots (h_{i_ {2k-1}}^{j_{2k-1}}h_{i_{2k}}^{j_{2k}}+\e\d_{i_{2k-1}}^{j_{2k-1}}\d_{i_{2k}}^{j_{2k}}) \\
=&\sum_{i=0}^{k}C_k^i \e^i(n-2k)(n-2k+1)\cdots (n-1-2k+2i)(2k-2i)!C_{n-1}^{2k-2i}p_{2k-2i} \\
=&\sum_{i=0}^{k}C_k^i \e^i\frac{(n-1)!}{(n-1-2k)!}p_{2k-2i}\\
=&C_{n-1}^{2k}(2k)!\sum_{i=0}^{k}C_k^i \e^i p_{2k-2i}.
\end{split}
\end{equation}
For simplicity, we denote
\begin{align}\label{2.10}
\~L_k=\sum_{i=0}^{k}C_k^i \e^i p_{2k-2i}, \quad \~N_k=\sum_{i=0}^{k}C_k^i \e^i p_{2k-2i+1}.
\end{align}

\subsection{Gauss maps and Dual hypersurfaces}
The Hadamard theorem (see do Carmo and Warner \cite{doCarmo-Warner1970}, see also Chapters 9 and 10 in Gerhardt's book \cite{Gerhardt2006}) states that for a closed, strictly convex, connected orientable immersed hypersurface $\Sigma$ in $\mathbb{M}^{n}(\e)$, it is necessarily embedded, $\Sigma$ is diffeomorphic to $\S^{n-1}$ and $\Sigma$ bounds a convex body $\Omega$ in $\mathbb{M}^{n}(\e)$.

In \cite[Chapter 9]{Gerhardt2006}, Gerhardt considered a one-to-one correspondence between closed, strictly convex hypersurfaces in the sphere via the Gauss map. More precisely, if $\Sigma$ is a closed, connected, strictly convex hypersurface given by an immersion
$$
x: M^{n-1} \ra \Sigma \subset \S^n,
$$
then the Hadamard theorem states that $\Sigma$ is embedded, diffeomorphic to $\S^{n-1}$, contained in an open hemisphere and is the boundary of a convex body $\Omega$ in $\S^n$. If we consider $\Sigma$ as a codimension $2$ submanifold in $\R^{n+1}$ such that
\begin{align*}
x_{ij}=-g_{ij}x-h_{ij}\~x,
\end{align*}
where $\~x\in T_x(\R^{n+1})$ is the unit outward normal vector $v\in T_x(\S^n)$ and the map
$$
\~x:M^{n-1} \ra \~\Sigma \subset \S^n
$$
is called the {\em Gauss map} of $\Sigma$. By \cite[Theorem 9.2.5]{Gerhardt2006}, the Gauss map $\~x$ is the embedding of a closed, strictly convex hypersurface $\~\Sigma$ in $\S^n$. We call $\~\Sigma$ the {\em dual hypersurface} of $\Sigma$. Viewing $\~\Sigma$ as a codimension $2$ submanifold in $\R^{n+1}$, its Gauss formula is
\begin{align*}
\~x_{ij}=-\~g_{ij}\~x-\~h_{ij}x,
\end{align*}
where $x$ is the embedding of $\Sigma$ which also represents the outward normal vector of $\~\Sigma$. The induced metrics $g$ (resp. $\~g$), the second fundamental forms $h$ (resp. $\~h$) and the principal curvatures $\k_i$ (resp. $\~\k_i$) of $\Sigma$ (resp. $\~\Sigma$) are closely related:
\begin{align*}
\~g_{ij}=h_i^k h^{m}_{k}g_{mj}, \quad \~h_{ij}=h_{ij}, \quad \~\k_i=\k_i^{-1}.
\end{align*}

In the similar spirit, Gerhardt \cite[Chapter 10]{Gerhardt2006} also established a one-to-one correspondence from the closed, strictly convex hypersurfaces in hyperbolic space $\H^n$ to the closed, strictly convex, spacelike hypersurfaces in de Sitter space $\S^n_1$. More precisely, if $\R^{n,1}$ is the $(n+1)$-dimensional Minkowski spacetime, that is the vector space $\R^{n+1}$ endowed with the Minkowski spacetime metric $\langle \cdot,\cdot,\rangle$ by
$$
\langle x,x\rangle =-(x^0)^2+\sum_{i=1}^{n} (x^i)^2,
$$
for any vector $x=(x^0,x^1,\cdots,x^n)\in \R^{n+1}$, where $x^0$ is the time coordinate. The hyperbolic space is
\begin{align*}
\H^n=\{x\in \R^{n,1} ~:~\langle x,x\rangle =-1,x^0>0\},
\end{align*}
and de Sitter space is
\begin{align*}
\S^n_1=\{x\in \R^{n,1} ~:~\langle x,x\rangle=1\}.
\end{align*}
If $\Sigma$ is a closed, connected, strictly convex hypersurface in $\H^n$ given by an immersion
$$
x:M^{n-1} \ra \Sigma \subset \H^n,
$$
then the Hadamard theorem states that $\Sigma$ is embedded, diffeomorphic to $\S^{n-1}$, contained in an open hemisphere and is the boundary of a convex body $\Omega$ in $\H^n$. If we consider $\Sigma$ as a codimension $2$ submanifold in $\R^{n,1}$ such that
\begin{align*}
x_{ij}=g_{ij}x-h_{ij}\~x,
\end{align*}
where $\~x\in T_x(\R^{n,1})$ is the unit outward normal vector $v\in T_x(\H^n)$ and the Gauss map
$$
\~x:M^{n-1} \ra \~\Sigma \subset \S^n_1,
$$
is an embedding of a closed, strictly convex, spacelike hypersurface $\~\Sigma$ in $\S^n_1$, which is the dual hypersurface of $\Sigma$ (see \cite[Theorem 10.4.4]{Gerhardt2006}). Viewing $\~\Sigma$ as a codimension $2$ submanifold in $\R^{n,1}$, its Gauss formula is
\begin{align*}
\~x_{ij}=-\~g_{ij}\~x+\~h_{ij}x,
\end{align*}
where $x$ is the embedding of $\Sigma$ which also represents the unit outward normal vector of $\~\Sigma\subset \S^n_1$, and the second fundamental form $\~h_{ij}$ is defined with respect to the future directed normal vector, where the time orientation of $\S^n_1$ is inherited from $\R^{n,1}$. The induced metrics $g$ (resp. $\~g$), the second fundamental forms $h$ (resp. $\~h$) and the principal curvatures $\k_i$ (resp. $\~\k_i$) of $\Sigma$ (resp. $\~\Sigma$) are also closely related:
\begin{align}\label{2.15}
\~g_{ij}=h_i^k h^{m}_{k}g_{mj}, \quad \~h_{ij}=h_{ij}, \quad \~\k_i=\k_i^{-1}.
\end{align}

\section{Harmonic mean curvature flow in hyperbolic space}
Let $X_0:M^{n-1}\ra \H^n$ be a smooth embedding such that $\Sigma= X_0(M)$ is a closed smooth hypersurface in $\H^n$. We consider a smooth family of immersions $X:M^{n-1}\times [0,T) \ra \H^n$ satisfying
\begin{equation}\label{3.1}
\begin{split}
\left\{\begin{aligned}
\frac{\partial}{\partial t}X(x,t)=&-\frac{p_{n-1}}{p_{n-2}}(\mathcal{W}(x,t))\nu(x,t),\\
X(\cdot,0)=&X_0(\cdot),
\end{aligned}\right.
\end{split}
\end{equation}
where $\Sigma_t=X(M,t)$ is a family of hypersurfaces in $\H^n$, $\nu$ is the unit outward normal to $\Sigma_t$. This flow (\ref{3.1}) is called the {\em harmonic mean curvature flow} (briefly, HMCF).
In contrast to the inverse curvature flows \cite{Brendle-Hung-Wang2015,Ge-Wang-Wu2014,Hu-Li2018,Li-Wei2017,Li-Wei-Xiong2014,Lima-Girao2016} or the quermassintegral preserving flows \cite{Andrews-Chen-Wei2018,Andrews-Wei2017,Wang-Xia2014}, etc., the harmonic mean curvature flow is a curvature contraction flow. Curvature contraction flows have been widely used in proving various geometric inequalities. In \cite{Andrews1996}, the first author investigated the affine curvature flow and proved the affine isoperimetric inequality in Euclidean space. Topping \cite{Topping1998} used the curve shortening flow to prove an isoperimetric-type inequality on surfaces. Schulze \cite{Schulze2008} applied the $H^k$-flow to prove the isoperimetric inequality for domains with smooth boundary in $\R^{n+1}$, where $n\leq 7$. He also gave a new proof for the Euclidean isoperimetric inequality on complete, simply connected $3$-dimensional manifolds with nonpositive sectional curvature, which was previously proved by Kleiner \cite{Kleiner1992}.

Our method also provides a new approach to proving geometric inequalities in hyperbolic space. Under the assumption that the initial hypersurface is strictly convex and satisfies the condition $\k_i H>n-1$ for each $i$, Huisken \cite{Huisken1986} proved that the mean curvature flow converges in finite time to a round point. The first author \cite{Andrews1994} proved the smooth convergence results for the flow of h-convex hypersurfaces in hyperbolic space, with speed given by functions with argument $\k_i-1$, in particular the {\em (shifted) harmonic mean curvature flow}. Later, Xu (see \cite[Theorem 3]{Xu2010}) proved the smooth convergence of the HMCF (\ref{3.1}) for strictly convex hypersurfaces in complete, simply connected $n$-dimensional manifolds with nonpositive sectional curvature. Chen and the first author \cite{Andrews-Chen2015} proved the smooth convergence of mean curvature flow for hypersurfaces with positive Ricci curvature in hyperbolic space. Recently, Yu \cite{Yu2016} proved the smooth convergence for a general class of curvature contraction flows in hyperbolic space.

The volume preserving mean curvature flow in hyperbolic space was first studied by Cabezas-Rivas and Miquel \cite{Cabezas-Rivas-Miquel2007}. Later, Makowski \cite{Makowski2012} proved smooth convergence of curvature contraction flow in hyperbolic space with a global term chosen to preserve the curvature integrals of the evolving hypersurfaces, provided the initial hypersurface is strictly h-convex. Later, Wang-Xia \cite{Wang-Xia2014} proved the smooth convergence of a similar flow of h-convex hypersurfaces in hyperbolic space, which preserves the quermassintegrals of the evolving domains. Recently, Wei and the first author \cite{Andrews-Wei2017} proved the smooth convergence of more general quermassintegral preserving curvature flows in $\H^n$. More recently, Chen, Wei and the first author \cite{Andrews-Chen-Wei2018} proved the smooth convergence of a volume preserving flow for hypersurfaces with positive sectional curvature in $\H^n$.

\subsection{Smooth convergence of HMCF}
The smooth convergence of the HMCF (\ref{3.1}) for strictly convex hypersurfaces in hyperbolic space has been proved by Xu \cite[Theorem 3]{Xu2010}, see also Yu \cite[Theorem 1.2]{Yu2016}.
\begin{pro}\label{pro-convergence-HMCF}
If $\Sigma$ is a strictly convex hypersurface in $\H^n$, then there exists a unique smooth solution of the HMCF (\ref{3.1}) on a maximal time interval $[0,T^\ast)$, and the hypersurfaces $\Sigma_t$ converge uniformly to a round point $p_0\in \H^n$ as $t\ra T^\ast$, in the sense that the rescaled flow converges smoothly to a round sphere. Moreover, the flow hypersurface $\Sigma_t$ is strictly convex for $t\in [0,T^\ast)$.
\end{pro}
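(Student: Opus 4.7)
The plan is to follow the standard framework for curvature contraction flows, adapted to the hyperbolic ambient geometry as in \cite{Xu2010} and \cite{Yu2016}. The speed $F(\mathcal{W})=p_{n-1}/p_{n-2}$ is (up to a constant) the harmonic mean of the principal curvatures; on the positive cone $\Gamma_+^{n-1}$ it is smooth, strictly monotone increasing, and concave as a function of the Weingarten tensor. Hence (\ref{3.1}) is quasilinear and strictly parabolic as long as $\Sigma_t$ remains strictly convex, and standard parabolic theory provides a unique smooth solution on a maximal time interval $[0,T^\ast)$.

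The first qualitative step is to show that strict convexity is preserved. Using the evolution equation of the Weingarten tensor $h_i^j$ along (\ref{3.1}), of the schematic form $\partial_t h_i^j=\dot F^{kl}\nabla_k\nabla_l h_i^j+(\mathrm{reaction})$, the reaction terms arising from the Gauss equation in $\H^n$ carry a sign favourable to convexity (this is where $\e=-1$ helps, via the homogeneity and monotonicity of $F$). Applying the tensor maximum principle of Andrews \cite{Andrews2007} to $\mathcal{W}$, one verifies the null-eigenvector condition and deduces $\kappa_{\min}(t)>0$ for all $t\in [0,T^\ast)$, which is the last assertion of the proposition.

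The heart of the matter is a pinching estimate bounding $\kappa_{\max}/\kappa_{\min}$ uniformly on $[0,T^\ast)$. The strategy is to apply the maximum principle to an auxiliary function such as $\log\kappa_{\max}-\log F$, exploiting the concavity of $F$ to absorb the bad terms coming from the Codazzi identity and the curved background. Combined with a barrier comparison against inner and outer geodesic spheres, this pinching estimate forces $T^\ast<\infty$ and shows that $\Sigma_t$ shrinks in Hausdorff distance to a single point $p_0\in\H^n$.

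Finally, one performs a parabolic rescaling around $p_0$ so that the rescaled hypersurfaces have outradius bounded away from zero and infinity. The pinching estimate and strict convexity give uniform $C^2$ control; $C^\infty$ estimates then follow from Krylov-Safonov together with Schauder theory applied to the linearised flow. Any subsequential limit is a smooth, strictly convex, homothetically shrinking self-similar solution, and since the ambient curvature term becomes negligible at infinitesimal scale the limit is a self-shrinker of the Euclidean HMCF with bounded pinching, hence a round sphere, which yields the smooth convergence assertion. The main obstacle is the pinching estimate: in hyperbolic space the standard Euclidean computation acquires additional curvature terms whose sign requires both the concavity of $F$ and a judicious choice of auxiliary quantity, and it is here that \cite{Xu2010} and \cite{Yu2016} invest the bulk of the work.
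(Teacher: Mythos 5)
The paper does not prove Proposition \ref{pro-convergence-HMCF}; the sentence immediately preceding it attributes the result to Xu \cite[Theorem 3]{Xu2010} and Yu \cite[Theorem 1.2]{Yu2016}. Your sketch is therefore an outline of an argument the paper defers entirely to the references, not a reconstruction of a proof the paper carries out, so a step-by-step comparison with the ``paper's proof'' is not possible.

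Even as a sketch, two steps deserve caution. First, the assertion that the reaction terms coming from the ambient hyperbolic geometry ``carry a sign favourable to convexity (this is where $\e=-1$ helps)'' is not correct. From the paper's own Lemma~\ref{lem-3.8}, the evolution of $h_i^j$ contains the ambient contribution $\dot\Psi^{kl}g_{kl}\,h_i^j - 2\Psi\,\d_i^j$; at a putative null eigenvector of $\mathcal W$ the first piece vanishes and one is left with $-2\Psi<0$, which works \emph{against} preserving positivity. What saves the argument in the literature is not a favourable sign but the degeneracy of $\Psi=p_{n-1}/p_{n-2}$ at $\partial\G_{n-1}^+$ together with the pinching estimate of Lemma~\ref{lem-3.2}, which together rule out $\k_1\to 0$ while the other curvatures stay bounded. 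Second, the closing step --- pass to a parabolic rescaling, extract a self-similar limit, and invoke a classification of Euclidean HMCF self-shrinkers --- is not how the contraction programme of \cite{Andrews1994-1} (on which \cite{Xu2010,Yu2016} build) establishes convergence to a round point: there is no monotonicity formula available for a general concave speed, so one cannot pass to a self-shrinker as in mean curvature flow. The mechanism is instead the pinching estimate combined with upper and lower speed bounds, Krylov--Safonov and Schauder regularity, and a Harnack/diameter-comparison argument; this is precisely the machinery the paper itself invokes in Lemma~\ref{lem-3.5} via \cite[Theorem 5.1 and Lemma 5.4]{Andrews1994-1}. A minor point: the flow is fully nonlinear rather than quasilinear; short-time existence and regularity rest on strict parabolicity and concavity of the speed, not on quasilinearity.
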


A major ingredient in the proof of the smooth convergence of the HMCF (\ref{3.1}) is the pinching estimate on the principal curvatures of the evolving hypersurfaces $\Sigma_t$, which was obtained by Xu \cite[Theorem 21]{Xu2010}).
\begin{lem}\label{lem-3.2}
If the initial hypersurface $\Sigma$ is strictly convex, then along the HMCF (\ref{3.1}) the evolving hypersurface $\Sigma_t$ satisfies the pinching estimate
$$
\k_{n-1}(x,t) \leq C \k_1(x,t), \quad \forall (x,t)\in M \times [0,T^\ast),
$$
where $\k_1 \leq \cdots \leq \k_{n-1}$ and $C$ depends only on the initial hypersurface $\Sigma$.
\end{lem}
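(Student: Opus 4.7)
The plan is to apply the tensor maximum principle of \cite{Andrews2007} to the Weingarten tensor $h^i_j$ along the HMCF, in order to preserve a matrix inequality that encodes the pinching. The speed $F = p_{n-1}/p_{n-2}$ is, up to a constant factor, the harmonic mean of the principal curvatures, hence a symmetric, degree-one homogeneous, concave function on the positive cone $\G_{n-1}^+$ with $\dot F^{kl}$ positive definite — precisely the analytic setup in which the tensor maximum principle delivers preserved matrix inequalities.

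First, set $C_0 := \max_{\Sigma_0} \k_{n-1}/\k_1$, which is finite by the strict convexity of $\Sigma_0$. For each $\l > 0$, consider the tensor $T^i_j := C_0\, h^i_j - \l\, \d^i_j$, and note that $T^i_j \geq 0$ pointwise gives $\k_1 \geq \l/C_0$; taking the infimum over admissible $\l$ for which $T \geq 0$ holds at $(x,t)$ then recovers $\k_{n-1}(x,t) \leq C_0\, \k_1(x,t)$. The initial condition $T^i_j \geq 0$ at $t=0$ with $\l = \k_{n-1}/C_0$ is precisely the definition of $C_0$. Next, I would derive the evolution equation
\[
\partial_t h^i_j = \dot F^{kl}\nabla_k\nabla_l h^i_j + \ddot F^{kl,mn}\nabla^i h_{kl}\,\nabla_j h_{mn} + F\, h^i_k h^k_j - F\, \d^i_j + (\text{hyperbolic terms from } \e=-1),
\]
and apply \cite{Andrews2007} to reduce the preservation of $T \geq 0$ to a pointwise algebraic inequality at null eigenvectors of $T$. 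The concavity of $F$ supplies the correct sign for the gradient correction term in the tensor maximum principle, and the ambient contribution $-F\, \d^i_j$ is a multiple of the identity and hence respects all matrix inequalities of this form.

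The main obstacle is controlling the quadratic reaction $F\, h^i_k h^k_j$ at a null eigenvector $\xi$ of $T$: it contributes $F\k_1^2\, \xi$, which is not manifestly compatible with the null condition $C_0\, h^i_j \xi^j = \l\, \xi^i$. Verifying the algebraic inequality requires combining the equality $\k_1 = \l/C_0$ along the null direction with the bounds $C_0\, \k_i \geq \l$ for the remaining principal curvatures, and exploiting the specific concavity structure of the harmonic mean (the favorable sign of $\ddot F^{kl,mn}$ together with the Codazzi constraint on $\nabla h$). This is the delicate computation carried out in \cite[Theorem 21]{Xu2010} and \cite[Theorem 1.2]{Yu2016}; once it is in place, the tensor maximum principle yields the claimed pinching with constant $C = C_0$ depending only on $\Sigma_0$.
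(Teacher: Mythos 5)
The paper does not prove this lemma; it cites it directly from Xu \cite[Theorem~21]{Xu2010}, so the evaluation here is of your proposed proof on its own terms.

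Your setup contains a substantive error in the choice of tensor. With $\lambda$ a \emph{constant}, the condition $T^i_j = C_0 h^i_j - \lambda\,\delta^i_j \geq 0$ is equivalent to $\kappa_1 \geq \lambda/C_0$, a fixed lower bound on the smallest principal curvature. Such a bound says nothing about the ratio $\kappa_{n-1}/\kappa_1$: the set of admissible $\lambda$ at a point $(x,t)$ is the interval $[0, C_0\kappa_1(x,t)]$, whose infimum is $0$ and whose supremum is $C_0\kappa_1(x,t)$, and neither quantity bounds $\kappa_{n-1}(x,t)$. Moreover, along a contraction to a point all principal curvatures blow up, so a fixed lower bound on $\kappa_1$ is eventually vacuous and in no way constrains pinching. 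You appear to be modelling this on the preservation of h-convexity in Lemma~\ref{lem-3.9} (where the preserved tensor is $h^i_j-\delta^i_j \geq 0$), but a lower bound and a pinching ratio are structurally different kinds of inequalities and need different tensors. The inequality that actually encodes a pinching estimate in Andrews' framework, and what is used in \cite{Andrews1994-1} and \cite{Xu2010}, is of the form $C\,\Psi\, g_{ij} - h_{ij} \geq 0$, i.e.\ $\kappa_{n-1} \leq C\,\Psi(\kappa)$, where $\Psi = p_{n-1}/p_{n-2}$ is the speed. Because $\Psi$ is (a multiple of) the harmonic mean, one always has $\Psi \leq (n-1)\kappa_1$, so $\kappa_{n-1} \leq C\Psi$ immediately yields $\kappa_{n-1} \leq C(n-1)\kappa_1$; unlike your tensor, this is scale-invariant and survives the blow-up of curvature.

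There are two further points worth flagging. First, your written evolution equation has the reaction term $F\,h^i_k h^k_j$, whereas the correct reaction (Lemma~\ref{lem-3.8}) is $\bigl(\dot\Psi^{kl}(h^2)_{kl} + \dot\Psi^{kl}g_{kl}\bigr)h^i_j - 2\Psi\,\delta^i_j$; the contraction structure is different and this matters for the sign analysis at a null eigenvector. Second, the paragraph you label ``the main obstacle'' ends by deferring the algebraic verification to \cite[Theorem~21]{Xu2010} and \cite[Theorem~1.2]{Yu2016} rather than carrying it out, so even with the correct tensor the proposal would not constitute an independent proof. Since the paper itself only cites Xu for this lemma, a sketch of the correct preserved tensor plus a citation would be acceptable, but the tensor you chose is not the right one and the claimed deduction of the pinching from it is incorrect.
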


\subsection{Comparability of inner radius and outer radius}
In order to estimate the limiting behavior of various functionals as $t\ra T^\ast$, the crucial part is to show that the inner radius and outer radius are comparable as $t\ra T^\ast$. Recall that the inner radius $\rho_{-}$ and outer radius $\rho_{+}$ of a bounded domain $\Omega_t$ with boundary $\Sigma_t$ in $\H^n$ is defined by
\begin{align*}
\rho_{-}(t):=&\sup \left\{ \rho~:~ \text{$B_\rho(p)$ is enclosed by $\Omega_t$ for some $p\in \H^n$}\right\},\\
\rho_{+}(t):=&\inf \left\{ \rho~:~ \text{$B_\rho(p)$ encloses $\Omega_t$ for some $p\in \H^n$}\right\},
\end{align*}
where $B_\rho(p)$ denotes the geodesic ball of radius $\rho$ about $p$ in $\H^n$. The comparability of inner radius and outer radius of the evolving domain $\Omega_t$ is satisfied if the evolving hypersurface $\Sigma_t$ is h-convex, since there exists a uniform constant $c>0$ such that $\rho_{+}\leq c(\rho_{-}+\rho_{-}^{1/2})$, see e.g. \cite[Theorem 1]{Borisenko-Miquel1999} or \cite[Theorem 2.1]{Andrews-Wei2017}. This remarkable property has been used widely in curvature contraction flows of h-convex hypersurfaces in hyperbolic space with a global term, see \cite{Cabezas-Rivas-Miquel2007,Makowski2012,Wang-Xia2014,Andrews-Wei2017}. Recently, Chen, Wei and the first author \cite{Andrews-Chen-Wei2018} considered volume preserving flows of hypersurfaces with positive sectional curvature in hyperbolic space. To obtain this property without h-convexity, they used an Alexandrov reflection argument to bound the diameter of the domain $\Omega_t$ enclosed by the evolving hypersurface $\Sigma_t$. Then they projected the domain $\Omega_t$ to the unit ball in Euclidean space $\R^n$ via the Beltrami-Klein ball model of the hyperbolic space.
The upper bound on the diameter of $\Omega_t$ implies that this map has bounded distortion. This together with the preservation of the volume of $\Omega_t$ gives a uniform lower bound on the inner radius of $\Omega_t$.

Here we also project the domain $\Omega_t$ to the unit ball $B_1(0)$ in Euclidean space $\R^n$ via the Beltrami-Klein ball model of the hyperbolic space. We use the contracting property of the HMCF and show that the inner radius and outer radius are comparable as $t\ra T^\ast$. As the hypersurface $\Sigma_t$ shrinks to a point as $t\ra T^\ast$, the inner radius and outer radius both approach zero as $t\ra T^\ast$. By the monotonicity of $W_r(\Omega_t)$ under set inclusion, we obtain the limits $\lim_{t\ra T^\ast}W_r(\Omega_t)$ for $0\leq r\leq n$. By the recursive formulas (\ref{2.5}), we also achieve the limits $\lim_{t\ra T^\ast}\int_{\Sigma_t}p_j$ for $0\leq j\leq n-1$. These limits will be used in calculating the limits of $P_k(t)$ and $Q_k(t)$.

For a fixed point $x_0\in \H^n$, the hyperbolic metric in the geodesic polar coordinates about $x_0$ can be expressed as
\begin{align}\label{3.2}
d\-s^2=d\rho^2+\sinh^2 \rho\,\s_{ij} d\t^i d\t^j,
\end{align}
where $\s_{ij}$ is the standard metric of $\S^{n-1}$. The geodesic spheres about $x_0$ are totally umbilical. The induced metric, second fundamental form, and the principal curvatures of the geodesic sphere $S_{\rho}=\{x^0=\rho\}$ are given by
\begin{align}\label{3.3}
\-g_{ij}=\sinh^2 \rho \s_{ij},\quad h_{ij}=\coth \rho \-g_{ij}, \quad \-\k_i=\coth \rho.
\end{align}
When the initial hypersurface is a geodesic sphere $S_{\rho_0}$, the evolving hypersurfaces are all geodesic spheres with the same center and their radii $\T=\T(t)$ satisfying the equation
\begin{align*}
\frac{d\T}{dt}=\coth \T, \quad \T(0)=\rho_0.
\end{align*}
More precisely, it can be rewritten as
\begin{align*}
\T(t,T^\ast)=\mrm{arccosh} (e^{T^\ast-t}),
\end{align*}
where $T^\ast=\ln \cosh \rho_0$.

The following lemma follows from the same argument as in \cite[Lemma 6.1]{Gerhardt2015-2}, see also \cite[Lemma 6.2]{Yu2016}.
\begin{lem}\label{lem-3.3}
Let $\Sigma_t$ be a solution of (\ref{3.1}) on a maximal time interval $[0,T^\ast)$, and represent $\Sigma_t$ for a fixed $t\in [0,T^\ast)$ as a graph in polar coordinates about $x_0 \in \Omega_t$:
\begin{align*}
\Sigma_t = \mrm{graph}\left(u(t,\cdot)\right)=\{\rho=u(t,\t)~|~\t \in \S^{n-1} \}.
\end{align*}
Then
\begin{align}\label{3.4}
\inf_{\Sigma_t} u \leq \T(t,T^\ast) \leq \sup_{\Sigma_t} u.
\end{align}
\end{lem}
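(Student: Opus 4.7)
The plan is to sandwich $\Theta(t,T^\ast)$ between $\inf_{\Sigma_t}u$ and $\sup_{\Sigma_t}u$ by comparing $\Sigma_s$ with concentric geodesic spheres about $x_0$ and invoking the avoidance principle for (\ref{3.1}). By Proposition \ref{pro-convergence-HMCF}, the flow hypersurfaces $\Sigma_s$ remain strictly convex on $[0,T^\ast)$, so the speed $p_{n-1}/p_{n-2}$ is smooth with positive-definite derivative on the relevant range of Weingarten tensors, and the linearized equation is uniformly parabolic on compact subintervals. Consequently two strictly convex HMCF solutions that are nested at some time stay nested as long as both exist, by the standard tangency argument at a putative first contact point. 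Concentric geodesic spheres are strictly convex and evolve as concentric spheres with radius satisfying the ODE displayed just before the lemma, so the sphere of initial radius $\rho$ shrinks to its center at absolute time $\ln\cosh\rho$ after initialization.

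For the upper bound, set $\rho_+:=\sup_\theta u(t,\theta)$, so $\Sigma_t\subset\overline{B}_{\rho_+}(x_0)$ with internal tangency on $S_{\rho_+}$. Run (\ref{3.1}) from time $t$ on the enclosing sphere $S_{\rho_+}$; it shrinks to $x_0$ at absolute time $t+\ln\cosh\rho_+$. The avoidance principle keeps $\Sigma_s$ inside the evolving sphere, so $\Sigma_s$ cannot exist past the sphere's singular time. Since $\Sigma_s$ is defined on all of $[0,T^\ast)$ this forces $T^\ast\leq t+\ln\cosh\rho_+$, i.e.
\[
\rho_+\geq\mrm{arccosh}(e^{T^\ast-t})=\Theta(t,T^\ast),
\]
which is the right inequality of (\ref{3.4}). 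Symmetrically, set $\rho_-:=\inf_\theta u(t,\theta)$, so $B_{\rho_-}(x_0)\subset\Omega_t$ with internal tangency. Under (\ref{3.1}) the inscribed sphere $S_{\rho_-}$ shrinks to $x_0$ at time $t+\ln\cosh\rho_-$ and, by the avoidance principle applied with $\Sigma_s$ as the outer surface, remains enclosed by $\Omega_s$ on their common interval of existence. The inner sphere must therefore become singular no later than $\Sigma_s$ does, yielding $t+\ln\cosh\rho_-\leq T^\ast$ and hence $\rho_-\leq\Theta(t,T^\ast)$.

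The main obstacle is the rigorous justification of the avoidance principle in this degenerate-parabolic setting, but it is handled by the observations above: both competing solutions remain in the open cone of strictly positive principal curvatures (for $\Sigma_s$ by Proposition \ref{pro-convergence-HMCF}, for geodesic spheres because $\k_i=\coth\rho>1$ in $\H^n$), and the speed $p_{n-1}/p_{n-2}$ is smooth and strictly monotone there, so the comparison reduces to the usual tangency principle. This is the same mechanism used in \cite[Lemma 6.1]{Gerhardt2015-2} and \cite[Lemma 6.2]{Yu2016}, which the authors already cite as the origin of the argument.
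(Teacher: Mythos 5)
Your proof is correct and follows the same strategy as the paper's cited references (Gerhardt \cite{Gerhardt2015-2} and Yu \cite{Yu2016}): sandwich $\Sigma_t$ between the inscribed and circumscribed geodesic spheres about $x_0$, propagate the nesting forward in time by the comparison/avoidance principle, and read off the inequality from the fact that a geodesic sphere of radius $\rho$ collapses to its centre in elapsed time $\ln\cosh\rho$ while $\Sigma_s$ collapses exactly at $T^\ast$. The justification you supply for the avoidance step is the right one: both families remain in the open cone of strictly convex data, where $p_{n-1}/p_{n-2}$ is a smooth, degree-one, strictly monotone curvature function, so the tangency argument at a putative first contact applies. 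One further detail worth noting for the lower bound: the limit point of $\Sigma_s$ as $s\to T^\ast$ need not be $x_0$ a priori, but this does not matter—if $t+\ln\cosh\rho_->T^\ast$, the inscribed sphere would retain a diameter bounded away from zero as $s\to T^\ast$ while, by avoidance, being contained in $\Omega_s$ whose diameter tends to zero, which is the contradiction you want.
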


Now we project the domain $\Omega_t$ in hyperbolic space $\H^n$ to the unit ball in Euclidean space $\R^n$ as in \cite{Andrews-Wei2017}. An embedding $X:M^{n-1}\ra \H^n$ induces an embedding $Y:M^{n-1}\ra B_1(0)\subset \R^n$ by
\begin{align}\label{3.5}
X=\frac{(1,Y)}{\sqrt{1-|Y|^2}}.
\end{align}
Let $g_{ij}^X$, $h_{ij}^X$ and $g_{ij}^X$, $h_{ij}^Y$ be the induced metrics and second fundamental forms of $X(M)\subset \H^n$ and $Y(M)\subset \R^n$, respectively, and $N\in \R^n$ be the unit normal vector of $Y(M)$. We have
\begin{align}\label{3.6}
h_{ij}^X=\frac{h_{ij}^Y}{\sqrt{(1-|Y|^2)(1-\langle N,Y\rangle^2)}},
\end{align}
and
\begin{align}\label{3.7}
g_{ij}^X=\frac{1}{1-|Y|^2}\left[ g_{ij}^Y+\frac{\langle Y,\partial_i Y\rangle \langle Y,\partial_j Y\rangle}{1-|Y|^2} \right].
\end{align}
Since each $\Sigma_t=X(M,t)$ is strictly convex in $\H^n$, the equation (\ref{3.6}) implies that $Y_t=Y(M,t)$ is strictly convex in $\R^n$ as well.

\begin{lem}\label{lem-3.4}
Let $\{Y_t\}_{t\in [0,T^\ast)}$ be the corresponding image of $\Sigma_t$ in $B_1(0)\subset \R^n$. Then there exists a positive constant $C<1$ depending only on the initial hypersurface $\Sigma$, such that
\begin{align}\label{3.8}
|Y(\cdot,t)|\leq C<1, \quad t\in [0,T^\ast).
\end{align}
\end{lem}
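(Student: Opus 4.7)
The plan is to exploit the contracting nature of the HMCF (\ref{3.1}) together with the continuity of the Beltrami--Klein projection (\ref{3.5}). First, I would observe that since $\Sigma_t$ is strictly convex for every $t\in[0,T^\ast)$ by Proposition \ref{pro-convergence-HMCF}, all principal curvatures on $\Sigma_t$ are positive, and hence $p_{n-1}/p_{n-2}>0$ on $\Sigma_t$. The normal speed $-p_{n-1}/p_{n-2}$ of the flow (\ref{3.1}) is therefore strictly negative, so the boundary moves strictly into the interior of $\Omega_t$ at every point. This gives the monotonicity of the enclosed domains with respect to set inclusion; in particular,
\begin{equation*}
\Omega_t\subset \Omega_0, \quad \forall\, t\in[0,T^\ast).
\end{equation*}

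Next, I would use the fact that the correspondence $X\mapsto Y$ defined by (\ref{3.5}) is a diffeomorphism from $\H^n$ onto the open unit ball $B_1(0)\subset \R^n$ (the Beltrami--Klein model of hyperbolic space), and in particular an inclusion-preserving homeomorphism. Let $\widetilde{\Omega}_t\subset B_1(0)$ denote the image of $\Omega_t$ under this map. Since $\Sigma_0$ is a smooth closed hypersurface in $\H^n$ bounding the compact domain $\Omega_0$, its image $\widetilde{\Omega}_0$ is a compact subset of the open unit ball $B_1(0)$, hence bounded away from the boundary sphere $\partial B_1(0)$. Therefore there exists a constant $C<1$, depending only on the initial hypersurface $\Sigma$, such that $\widetilde{\Omega}_0\subset \overline{B_C(0)}$.

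Combining the two steps, the inclusion $\Omega_t\subset \Omega_0$ transfers across the homeomorphism (\ref{3.5}) to give $\widetilde{\Omega}_t\subset \widetilde{\Omega}_0\subset \overline{B_C(0)}$, and since $Y_t=Y(M,t)=\partial\widetilde{\Omega}_t$ lies in $\overline{\widetilde{\Omega}_t}$, we conclude $|Y(\cdot,t)|\le C<1$ for all $t\in[0,T^\ast)$, as required. There is no real obstacle in this argument: it is essentially topological, and the only point requiring care is verifying that the HMCF is genuinely contracting in the required pointwise sense, which follows at once from the sign of the speed function together with the preservation of strict convexity furnished by Proposition \ref{pro-convergence-HMCF}.
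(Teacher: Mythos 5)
Your argument is correct and arrives at the same key intermediate fact as the paper (a uniform bound on how far $\Omega_t$ can extend), but you get there by a more direct route. The paper, following Andrews--Chen, applies the maximum principle to the hyperbolic distance function $d$ from a fixed point: at a spatial maximum of $d$ the normal is radial, one deduces $\k_i\geq\coth d$, hence the speed $p_{n-1}/p_{n-2}\geq\coth d>0$, so $\max d$ is non-increasing and the diameter of $\Omega_t$ stays bounded; this bounded diameter is then transferred to the Klein model to give $|Y|\leq C<1$. You instead observe that since the HMCF speed is strictly positive, the domains are nested, $\Omega_t\subset\Omega_0$, and then invoke only that the Beltrami--Klein map is a diffeomorphism of $\H^n$ onto $B_1(0)$ carrying the compact set $\Omega_0$ to a compact subset of the open ball, which is automatically bounded away from $\partial B_1(0)$. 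The two approaches both reduce to a uniform containment in a compact subset of the Klein ball; yours is cleaner, but it quietly relies on the assertion that a flow with strictly negative normal speed produces a nested family $\Omega_t\subset\Omega_s$ for $t>s$, and making that fully rigorous is itself a first-touching-time/comparison argument of the same flavor as the maximum principle on $d$ that the paper carries out explicitly. It would be worth a sentence (or a citation of the avoidance principle) to close that gap; aside from that your proof is sound and arguably preferable for this specific flow, while the paper's version has the virtue of being robust to modifications (e.g., flows with a global term) where strict contraction is not automatic.
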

\begin{proof}
We first prove that the diameter of $\Omega_t$ is bounded as the proof in \cite[Proposition 1]{Andrews-Chen2015}. Let $d$ be the hyperbolic distance from any fixed point in $\H^n$. Then $d$ is smooth where it is nonzero, and the evolution of $d$ is
\begin{align*}
\frac{\partial d}{\partial t}=D d \(-\frac{p_{n-1}}{p_{n-2}}\nu\).
\end{align*}
At a point where the spatial maximum of $d$ is attained, we have
\begin{align*}
0=\nabla_i d=D d(\partial_i),
\end{align*}
so that $\nu$ points in the radial direction, and
\begin{align*}
0\geq \nabla_i \nabla_j d =D^2 d(\partial_i,\partial_j)-Dd(h_{ij}\nu).
\end{align*}
Since $\nu$ is radial, we have $D^2 d(\partial_i,\partial_j)=(\cosh d) g_{ij}$, which implies
$$
\k_i \geq \coth d, \quad \text{for all $i$}.
$$
As $\frac{p_{n-1}}{p_{n-2}}$ is an increasing function of the principal curvatures, we get
$$
\frac{p_{n-1}}{p_{n-2}}(\k) \geq \frac{p_{n-1}}{p_{n-2}}(\coth d,\cdots,\coth d)=\coth d>0.
$$
The maximum principle \cite[Lemma 3.5]{Hamilton1986} implies that the maximum of $d$ is non-increasing, and hence $d\leq d_0:=\sup_{x\in M} d(X(x,0))$. Thus, the diameter of $\Omega_t$ is bounded. Let $\~\Omega_t\subset B_1(0)$ be the corresponding image of $\Omega_t$. Then $\~\Omega_t$ is a convex domain in $B_1(0)$. Then the diameter bound of $\Omega_t$ implies the diameter bound of $\~\Omega_t$. In particular, there exists a positive constant $C<1$ such that
\begin{align*}
|Y(\cdot,t)|\leq C<1, \quad t\in [0,T^\ast).
\end{align*}
\end{proof}

By Lemma \ref{lem-3.4} and (\ref{3.7}), the induced metric $g_{ij}^X$ and $g_{ij}^Y$ are comparable. More precisely, there exists a constant $\d\in (0,1)$ depending only on the initial hypersurface $\Sigma$ such that
\begin{align}\label{3.9}
\d^2 g_{ij}^X \leq g_{ij}^Y \leq g_{ij}^X.
\end{align}
By (\ref{3.6}), together with Lemma \ref{lem-3.2}, we have pinching estimate on the principal curvatures of $\{Y_t\}_{t\in [0,T^\ast)}$ in $B_1(0)\subset \R^n$:
\begin{align}\label{3.10}
\~\k_{n-1}(x,t) \leq C \~\k_1(x,t), \quad \forall (x,t)\in M \times [0,T^\ast),
\end{align}
where $\~\k_1 \leq \cdots \leq \~\k_{n-1}$ are the principal curvatures of $Y_t$ and $C$ depends only on the initial hypersurface $\Sigma$.

Now we choose $x_0=(1,0)\in \Omega_t$ to be the center of the inner ball of $\Omega_t\subset \H^n$ and let $x_0$ be the center of the geodesic polar coordinates of $\H^n$. Then the center of the Euclidean inner ball of $\~\Omega_t\subset B_1(0)$ is $0$. Let $\~\rho_{-}(t)$ and $\~\rho_{+}(t)$ be the inner radius and outer radius of $\~\Omega_t$ in $B_1(0)$, respectively. In \cite{Andrews1994}, the first author proved the comparability of the inner radius and outer radius for strictly convex hypersurfaces in Euclidean space, provided that the pinching estimate (\ref{3.10}) holds. With the help of this property, we show the comparability of the inner radius and outer radius of $\Sigma_t$ as $t\ra T^\ast$.
\begin{lem}\label{lem-3.5}
There exist positive constants $C$ and $\eta$, depending only on the initial hypersurface $\Sigma$, such that
\begin{align}\label{3.11}
\rho_{+}(t)  \leq C\rho_{-}(t), \quad \forall t\in [T^\ast-\eta,T^\ast).
\end{align}
\end{lem}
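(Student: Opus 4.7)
The plan is to transfer the problem to Euclidean geometry via the Beltrami-Klein projection (\ref{3.5}), apply the Euclidean comparability result from \cite{Andrews1994}, and then transfer back, using that the hyperbolic and Euclidean metrics are uniformly equivalent on the compact set $B_C(0)\subset B_1(0)$. By Proposition \ref{pro-convergence-HMCF}, $\Sigma_t$ shrinks to a point $p_0\in \H^n$ as $t\ra T^\ast$, so $\rho_+(t)\ra 0$. Under the correspondence (\ref{3.5}), each $\~\Omega_t$ is a convex domain in $B_C(0)\subset \R^n$ by Lemma \ref{lem-3.4} (convex sets in $\H^n$ correspond to Euclidean convex sets, since the Beltrami-Klein model sends hyperbolic geodesics to Euclidean line segments), and both Euclidean radii $\~\rho_-(t),\~\rho_+(t)$ shrink to zero.

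The heart of the argument is the Euclidean step. By (\ref{3.6}) the hypersurfaces $Y_t=\partial \~\Omega_t$ are strictly convex in $\R^n$, and by (\ref{3.10}) their principal curvatures are uniformly pinched with a constant depending only on $\Sigma$. The argument in \cite{Andrews1994} then shows that a strictly convex Euclidean hypersurface with uniformly bounded pinching ratio and sufficiently small diameter has comparable inner and outer radii. Hence there exist $\~C>0$ and $\eta>0$, depending only on $\Sigma$, such that
\begin{align*}
\~\rho_+(t) \leq \~C\,\~\rho_-(t), \quad t\in [T^\ast-\eta, T^\ast).
\end{align*}

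To transfer back, I would use the Beltrami-Klein form of the hyperbolic metric (cf.\ (\ref{3.7})), which yields
\begin{align*}
|V|_E^2 \leq g^X(V,V) \leq (1-C^2)^{-2}|V|_E^2
\end{align*}
for every tangent vector $V$ at a point of $B_C(0)$. Since hyperbolic geodesics in this model are Euclidean line segments and $B_C(0)$ is convex, this pointwise inequality integrates to $d_E\leq d_\H\leq (1-C^2)^{-1}d_E$ on $B_C(0)$. Comparing inscribed and circumscribed balls in the two metrics then gives $\rho_-(t)\geq \~\rho_-(t)$ and $\rho_+(t)\leq (1-C^2)^{-1}\~\rho_+(t)$, which combined with the Euclidean estimate yields $\rho_+(t)\leq \~C(1-C^2)^{-1}\rho_-(t)$ on $[T^\ast-\eta, T^\ast)$. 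The main obstacle lies in the Euclidean step: since $Y_t$ does not evolve under any standard Euclidean curvature flow, one must extract from \cite{Andrews1994} only the static input that strict convexity together with uniform pinching and small diameter forces the inner and outer radii to be comparable.
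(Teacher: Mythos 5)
Your proposal is correct, and the transfer-back step is genuinely different from the paper's. The paper compares the Euclidean and hyperbolic radii by exploiting the exact correspondence $r=\tanh\rho$ between concentric geodesic spheres in the Klein model; since $\mathrm{arctanh}$ is only Lipschitz near the origin, they must first control $\~\rho_-(t)$ by the radius $\~\Theta(t,T^\ast)$ of the comparison sphere (Lemma~\ref{lem-3.3}) and shrink $\eta$ until $2c\,\~\Theta(t,T^\ast)$ lies in the controlled range. Your proof replaces all of this with a single static observation: the ambient Klein-model metric satisfies $|V|_E^2\leq g_{\H}(V,V)\leq (1-C^2)^{-2}|V|_E^2$ on $B_C(0)$, and since Klein geodesics are Euclidean segments and $B_C(0)$ is convex, this integrates to $d_E\leq d_\H\leq (1-C^2)^{-1}d_E$ for any pair of points in $B_C(0)$; the comparison of inradii and circumradii is then immediate for any ball center. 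This avoids Lemma~\ref{lem-3.3} and the $\mathrm{arctanh}$ computation entirely and actually yields $\rho_+(t)\leq \~C(1-C^2)^{-1}\rho_-(t)$ for every $t\in[0,T^\ast)$, so your $\eta$ is vestigial (a harmless artifact). The Euclidean heart of the argument---pinching implies comparable inner and outer radii---is identical to the paper's and comes from the same source.

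Two small corrections. First, the reference in the Euclidean step should be to \cite{Andrews1994-1} (contraction of convex hypersurfaces \emph{in Euclidean space}, where Theorem~5.1 and Lemma~5.4 give the static radius comparison from pinching), not \cite{Andrews1994}, which is the Riemannian paper. Second, the caveat that the Euclidean result requires ``sufficiently small diameter'' is unnecessary: by scale invariance in $\R^n$, a pinching bound $\~\k_{n-1}\leq C\~\k_1$ alone controls the ratio $\~\rho_+/\~\rho_-$, independently of diameter. Neither issue affects the validity of the argument.
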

\begin{proof}
The pinching estimate (\ref{3.10}) in Euclidean space and \cite[Theorem 5.1 and Lemma 5.4]{Andrews1994-1} imply that there exists a uniform constant $c>0$ such that
\begin{align*}
\~\rho_{+}(t) \leq c\~\rho_{-}(t).
\end{align*}
Hence $\~\Omega_t$ is contained in the Euclidean ball $B_{2c\~\rho_{-}(t)}(0)$. The Euclidean metric over $B_1(0)$ in the geodesic polar coordinates can be expressed as
\begin{align}\label{3.12}
d\~s^2=dr^2+r^2 \s_{ij} d\t^i d\t^j,
\end{align}
where $r$ is the Euclidean distance in $B_1(0)$. As $Y_t$ is strictly convex in $B_1(0)$, it can be expressed as a graph $\~u$ over $\S^{n-1}$:
\begin{align*}
Y_t= \mrm{graph} \~u = \{r=\~u(t,\t)~|~\t \in \S^{n-1} \}.
\end{align*}
By (\ref{3.5}), the geodesic sphere $S_\rho=\{x^0=\rho\}$ in hyperbolic space $\H^n$ corresponds to the geodesic sphere $\~S_r=\{|Y|=r\}$ in $B_1(0)$, and hence $r=\tanh \rho$. By taking
\begin{align*}
\~\Theta(t,T^\ast):=\tanh \Theta(t,T^\ast)=\sqrt{1-e^{-2(T^\ast-t)}},
\end{align*}
it follows from (\ref{3.4}) that
\begin{align*}
\inf_{Y_t}\~u \leq \~\Theta(t,T^\ast) \leq \sup_{Y_t}\~u.
\end{align*}
Thus, we have
\begin{align*}
2c\~\rho_{-}(t)\leq 2c\~\T(t,T^\ast).
\end{align*}
Observe that $\~\T(t,T^\ast)\ra 0$ as $t\ra T^\ast$, we can pick a sufficiently small constant $\eta>0$ such that
\begin{align*}
2c\~\T(t,T^\ast)\leq 1, \quad  \forall t\in [T^\ast-\eta,T^\ast).
\end{align*}
Let $\rho(t)=\mrm{arctanh}(2c\~\rho_{-}(t))$. Then $\Omega_t \subset B_{\rho(t)}(x_0) \subset \H^n$. Since $0\leq 2c\~\T(t,T^\ast)\leq 1$, we get
\begin{align*}
2c\~\rho_{-}(t) \leq \rho(t) \leq 4c \~\rho_{-}(t), \quad \~\rho_{-}(t)\leq \rho_{-}(t),
\end{align*}
where the latter inequality follows from (\ref{3.9}). Finally, we obtain
$$
\rho_{+}(t)\leq \rho(t) \leq 4c\~\rho_{-}(t) \leq 4c \rho_{-}(t),
$$
which completes the proof.
\end{proof}

By the comparability of the inner radius and outer radius of $\Sigma_t$ as $t\ra T^\ast$, we can estimate the quermassintegrals and curvature integrals of the evolving hypersurfaces of the HMCF (\ref{3.1}) as $t\ra T^\ast$.
\begin{lem}\label{lem-3.6}
Let $\Sigma$ be a strictly convex hypersurface in $\H^n$. Let $\Sigma_t$, $t\in [0,T^\ast)$ be the solution of the HMCF (\ref{3.1}) with the initial hypersurface $\Sigma$, and $\Omega_t$ the domain enclosed by $\Sigma_t$. Then we have
\begin{equation}\label{3.13}
\begin{split}
\lim_{t\ra T^\ast}W_k(\Omega_t)=\left\{\begin{aligned}
&0, \quad    &0\leq k\leq n-1;\\
&\frac{\omega_{n-1}}{n}, \quad &k=n,
\end{aligned}\right.
\end{split}
\end{equation}
and
\begin{equation}\label{3.14}
\begin{split}
\lim_{t\ra T^\ast}\int_{\Sigma_t}p_j=\left\{\begin{aligned}
&0, \quad    &0\leq j\leq n-2;\\
&\omega_{n-1}, \quad &j=n-1,
\end{aligned}\right.
\end{split}
\end{equation}
\end{lem}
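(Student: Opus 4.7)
The plan is to compare $\Omega_t$ with a circumscribing geodesic ball that shrinks to the limit point, extracting the quermassintegral limits \eqref{3.13} by set-inclusion monotonicity \eqref{2.4}, and then deducing the curvature-integral limits \eqref{3.14} by substituting into the recursive formulas \eqref{2.5}.

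By Proposition~\ref{pro-convergence-HMCF}, $\Sigma_t$ shrinks smoothly to a round point, so $\rho_{+}(t) \to 0$ as $t \to T^\ast$. Choose $q_t \in \H^n$ with $\Omega_t \subset B_{\rho_{+}(t)}(q_t)$; both sets are convex (Proposition~\ref{pro-convergence-HMCF} gives strict convexity of $\Sigma_t$), so \eqref{2.4} yields
\[
0 \leq W_k(\Omega_t) \leq W_k\bigl(B_{\rho_{+}(t)}(q_t)\bigr), \qquad 0 \leq k \leq n-1.
\]
On the geodesic sphere $S_\rho$ one has $p_j = \coth^j\rho$ and $|S_\rho| = \o_{n-1}\sinh^{n-1}\rho$ by \eqref{3.3}, so $\int_{S_\rho} p_j = \o_{n-1}\sinh^{n-1-j}\rho\cosh^j\rho \to 0$ as $\rho \to 0$ whenever $j \leq n-2$. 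The representations \eqref{2.6}--\eqref{2.7} with $\e = -1$ then express $W_k(B_\rho)$ for $0 \leq k \leq n-1$ as a linear combination of such integrals (together, when $k$ is even, with a term involving $\mrm{Vol}(B_\rho) = \o_{n-1}\int_0^\rho \sinh^{n-1} s\,ds$), all of which tend to zero. Combined with the convention $W_n \equiv \o_{n-1}/n$, this gives \eqref{3.13}.

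With \eqref{3.13} in hand, substitute $\e = -1$ in \eqref{2.5} to get
\[
\int_{\Sigma_t} p_j = n W_{j+1}(\Omega_t) + \frac{nj}{n-j+1} W_{j-1}(\Omega_t), \qquad 1 \leq j \leq n-1,
\]
together with $\int_{\Sigma_t} p_0 = |\Sigma_t| = n W_1(\Omega_t)$. For $0 \leq j \leq n-2$ both quermassintegrals on the right vanish in the limit, yielding $\int_{\Sigma_t} p_j \to 0$; for $j = n-1$, the constant $W_n$ contributes $n \cdot \o_{n-1}/n = \o_{n-1}$ while $W_{n-2}(\Omega_t) \to 0$, giving $\int_{\Sigma_t} p_{n-1} \to \o_{n-1}$. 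This is \eqref{3.14}.

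There is no genuine obstacle at this stage: the substantive analytic input (smooth convergence to a point and comparability of radii) has already been performed in Proposition~\ref{pro-convergence-HMCF} and Lemma~\ref{lem-3.5}, and in fact only the outer-radius bound is needed for the argument above. The one conceptual point worth flagging is the endpoint $j=n-1$: the total curvature does not vanish in the limit because $W_n$ is the topological constant $\o_{n-1}/n$ rather than a shrinking quermassintegral, and this surviving $\o_{n-1}$ is precisely the Gauss--Bonnet content that drives the Alexandrov--Fenchel inequalities in the subsequent sections.
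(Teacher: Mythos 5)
Your proof is correct and follows essentially the same route as the paper's: squeeze $W_k(\Omega_t)$ between $0$ and $W_k\bigl(B_{\rho_+(t)}\bigr)$ by set-inclusion monotonicity \eqref{2.4}, compute $W_k(B_\rho)\to 0$ for $0\le k\le n-1$ as $\rho\to 0$ from the explicit formulas \eqref{2.6}--\eqref{2.7}, and then read off \eqref{3.14} from the recursion \eqref{2.5}. The only difference from the paper is cosmetic: the paper sandwiches $\Omega_t$ between $B_{\rho_-(t)}$ and $B_{\rho_+(t)}$, whereas you correctly observe that the trivial lower bound $W_k\ge 0$ makes the inner ball (and hence Lemma~\ref{lem-3.5}) dispensable for this particular conclusion, provided one accepts that Proposition~\ref{pro-convergence-HMCF} already gives $\rho_+(t)\to 0$.
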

\begin{proof}
Let $\Sigma_t$ be the evolving hypersurface for $t\in [0,T^\ast)$ and $\Omega_t$ the domain enclosed by $\Sigma_t$, respectively. By Proposition \ref{pro-convergence-HMCF}, we have $\rho_{-}(t)\ra 0$ as $t\ra T^\ast$. By (\ref{3.11}) in Lemma \ref{lem-3.5}, we know that $\rho_{+}(t)\ra 0$ as $t\ra T^\ast$.
By the monotonicity of $W_r$ under set inclusion, we have
\begin{align}\label{3.15}
W_r(B_{\rho_{-}(t)})\leq W_r(\Omega_t)\leq W_r(B_{\rho_{+}(t)}).
\end{align}
Since $p_j(B_\rho)=\coth^j \rho$, it follows from (\ref{2.6}) and (\ref{2.7}) that if $1\leq r\leq n-1$ and $r$ is odd,
$$
W_r(B_\rho)=\frac{1}{n}\sum_{i=0}^{\frac{r-1}{2}}\frac{(r-1)!!(n-r)!!}{(r-1-2i)!!(n-r+2i)!!}\omega_{n-1}\coth^{r-1-2i}\rho\sinh^{n-1}\rho,
$$
while if $1\leq r\leq n-1$ and $r$ is even,
\begin{align*}
W_r(B_\rho)=&\frac{1}{n}\sum_{i=0}^{\frac{r}{2}-1}\frac{(r-1)!!(n-r)!!}{(r-1-2i)!!(n-r+2i)!!}\omega_{n-1}\coth^{r-1-2i}\rho\sinh^{n-1}\rho\\
&+(-1)^\frac{r}{2}\frac{(r-1)!!(n-r)!!}{n!!}\int_0^\rho \omega_{n-1}\sinh^{n-1}(s)ds.
\end{align*}
If $r=0$, then $W_0(B_\rho)=\Vol(B_\rho)=\int_{0}^{\rho}\omega_{n-1}\sinh^{n-1}(s)ds$. If $r=n$, it follows from the Gauss-Bonnet-Chern theorem that $W_n(B_\rho)=\frac{\omega_{n-1}}{n}$. By the explicit expression for $W_r(B_\rho)$, we have
\begin{align*}
\lim_{\rho \ra 0}W_r (B_{\rho})=\left\{\begin{aligned}
&0, \quad   &0\leq r\leq n-1;\\
&\frac{\omega_{n-1}}{n}, \quad &r=n.
\end{aligned}\right.
\end{align*}
By (\ref{3.15}), we obtain (\ref{3.13}). Then (\ref{3.14}) follows from (\ref{2.5}) and (\ref{3.13}).
\end{proof}

\subsection{Preserving of h-convexity}
We will use the tensor maximum principle to prove that {\em h-convexity} is preserved along the HMCF. The tensor maximum principle was first proved by Hamilton \cite{Hamilton1982} and was generalized by the first author \cite{Andrews2007}.
\begin{theo}[\cite{Andrews2007}]\label{theo-Andrews}
Let $S_{ij}$ be a smooth time-varying symmetric tensor field on a closed manifold $M$, satisfying
\begin{align}\label{Andrews-evolution-1}
\frac{\partial}{\partial t}S_{ij} =a^{kl} \nabla_k \nabla_l S_{ij}+u^k\nabla_k S_{ij}+N_{ij},
\end{align}
where $a^{kl}$ and $u^k$ are smooth, $\nabla$ is a (possibly time-dependent) smooth symmetric connection, and $a^{kl}$ is positive definite everywhere. Suppose that
\begin{align}\label{Andrews-evolution-2}
N_{ij}v^iv^j+\sup_{\L} 2 a^{kl} (2\L_k^p \nabla_l S_{ip} v^i-\L_k^p \L_l^q S_{pq}) \geq 0,
\end{align}
where $S_{ij}\geq 0$ and $S_{ij}v^j=0$. If $S_{ij}$ is positive definite everywhere on $M$ at $t=0$, then it is positive definite on $M\times[0,T]$.
\end{theo}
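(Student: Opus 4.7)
The plan is to argue by contradiction using a minimum-principle argument at a first failure time, combined with an exhaustion of the freedom available in extending a null eigenvector of $S_{ij}$ to a smooth vector field. The $\sup_\Lambda$ term in (\ref{Andrews-evolution-2}) is tailored precisely to absorb this freedom.

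First I would set up for contradiction. Assume $t_0\in(0,T]$ is the first time at which $S_{ij}$ fails to be positive definite; by compactness of $M$ and continuity, $S_{ij}(\cdot,t_0)\geq 0$ on $M$ and there exist $x_0\in M$ and a unit vector $v_0\in T_{x_0}M$ with $S_{ij}(x_0,t_0)v_0^j=0$. Extend $v_0$ smoothly to a vector field $v(x)$ on a neighborhood of $x_0$, with $v(x_0)=v_0$ and with first covariant derivatives $\Lambda_k^p:=(\nabla_k v^p)(x_0)$ any prescribed $(1,1)$-tensor---this is the crucial flexibility. Set $f(x,t)=S_{ij}(x,t)v^i(x)v^j(x)$; then $f\geq 0$ on $M\times[0,t_0]$ (since $S\geq 0$ there) and $f(x_0,t_0)=0$, so $(x_0,t_0)$ is a space-time minimum of $f$.

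Next I would apply the minimum principle $\partial_t f-a^{kl}\nabla_k\nabla_l f\leq 0$ at $(x_0,t_0)$. Using the evolution equation (\ref{Andrews-evolution-1}), the identity $S_{ij}v^j=0$ at $(x_0,t_0)$ (which kills the second-derivatives-of-$v$ terms appearing in the expansion of $\nabla_k\nabla_l f$), and the first-order condition $\nabla_l f=0$ (which together with $Sv=0$ forces $(\nabla_l S_{ij})v_0^iv_0^j=0$ and hence annihilates the drift contribution $u^k\nabla_k S_{ij}v^iv^j$), a direct computation yields at $(x_0,t_0)$
\begin{align*}
0\;\geq\; N_{ij}v_0^iv_0^j\;-\;4a^{kl}\Lambda_k^p(\nabla_l S_{ip})v_0^i\;-\;2a^{kl}\Lambda_k^p\Lambda_l^q S_{pq}.
\end{align*}
Since the extension of $v_0$ was arbitrary, this must hold for every tensor $\Lambda$; replacing $\Lambda$ by $-\Lambda$ and taking the supremum recasts the right-hand side as exactly the opposite of (\ref{Andrews-evolution-2}), producing
\begin{align*}
N_{ij}v_0^iv_0^j+\sup_{\Lambda}2a^{kl}\bigl[2\Lambda_k^p\nabla_l S_{ip}v_0^i-\Lambda_k^p\Lambda_l^q S_{pq}\bigr]\;\leq\;0.
\end{align*}

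To turn this equality situation into a strict contradiction, I would perturb to $S^\eta_{ij}:=S_{ij}+\eta e^{Kt}g_{ij}$ with $\eta>0$ small and $K$ large relative to the $C^2$-bounds on a fixed background metric $g_{ij}$ measured against $a^{kl}$ and $u^k$. The perturbed tensor evolves under the same type of equation with $N$ replaced by $N^\eta=N+\eta e^{Kt}(Kg-a^{kl}\nabla_k\nabla_l g-u^k\nabla_k g)$, which for $K$ large exceeds $N$ by a strictly positive multiple of $g$. Rerunning the minimum-principle analysis for $S^\eta$ in place of $S$ then produces a strict contradiction for every small $\eta>0$; letting $\eta\to 0^+$ gives $S\geq 0$ on $M\times[0,T]$, and the strong maximum principle together with strict positivity at $t=0$ upgrades this to $S>0$. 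The main obstacle is precisely this transfer: (\ref{Andrews-evolution-2}) is imposed on $S$ at its own null eigenvectors, whereas the minimum principle for $S^\eta$ detects null eigenvectors of $S^\eta$ (at which $Sv=-\eta e^{Kt}v$, so $S$ is not even positive semidefinite). One must verify that the extra positive term $\tfrac{1}{2}K\eta e^{Kt}g$ inside $N^\eta$ uniformly dominates the $O(\eta)$ discrepancy produced by evaluating (\ref{Andrews-evolution-2}) slightly off the set $\{Sv=0,\ S\geq 0\}$, which is what fixes the required size of $K$ in terms of the problem data.
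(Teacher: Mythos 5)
The paper cites this theorem to \cite{Andrews2007} and gives no proof of it, so there is no in-paper argument to compare against; what follows is an assessment of your sketch on its own terms.

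Your minimum-principle computation is correct, and your identification of the $\sup_{\L}$ term as parametrizing the freedom in prescribing $\nabla v$ at the degenerate point is exactly the right mechanism: using $S_{ij}v^j=0$ to kill the $S(\nabla\nabla v,v)$ and $S(\nabla v,\cdot)$ contributions and $\nabla f=0$ to kill the drift, one obtains at $(x_0,t_0)$, for every $\L$,
\begin{equation*}
0\ \geq\ N_{ij}v_0^iv_0^j+2a^{kl}\bigl(2\L_k^p\nabla_lS_{ip}v_0^i-\L_k^p\L_l^qS_{pq}\bigr),
\end{equation*}
and the supremum over $\L$ reproduces the negative of the left side of (\ref{Andrews-evolution-2}), so the direct argument terminates in an equality rather than a contradiction, as you correctly observe.

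The genuine gap sits exactly at the step you flag as ``the main obstacle,'' and your sketch does not resolve it. As stated, (\ref{Andrews-evolution-2}) is a \emph{conditional} inequality: it is asserted only at points where $S\geq 0$ and $Sv=0$. At the first degeneracy time $t_0$ of $S^\eta=S+\eta e^{Kt}g$ you have $S^\eta v_0=0$ and $S^\eta\geq 0$ but $Sv_0=-\eta e^{Kt_0}v_0$, so $S$ itself is strictly indefinite and the hypothesis, read literally, is vacuous there. Your claim that the ``discrepancy produced by evaluating (\ref{Andrews-evolution-2}) slightly off the set $\{Sv=0,\ S\geq 0\}$'' is $O(\eta)$ therefore does not follow from the stated hypothesis: nothing in the theorem's wording prevents the quantity on the left of (\ref{Andrews-evolution-2}) from dropping by an $\eta$-independent amount the moment an eigenvalue of $S$ crosses zero. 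What actually closes this gap in Andrews's formulation is that $N$ is an explicit, locally Lipschitz function $N=N(x,t,S,\nabla S)$ and the null-eigenvector condition is a \emph{structural} hypothesis on that function, valid for every PSD argument and every null vector; one then applies the condition directly to the pair $(S^\eta,\nabla S^\eta)$ --- which genuinely lies on the constraint set --- and uses the Lipschitz modulus of $N$ to bound $N(S,\nabla S)-N(S^\eta,\nabla S^\eta)=O(\eta e^{Kt})$, so that the gain $\tfrac12K\eta e^{Kt}$ contained in $N^\eta$ dominates once $K$ is chosen large relative to that Lipschitz constant, $\|a\|$, $\|u\|$, and the $C^2$ data of $g$. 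You need to state this structural dependence and carry out the estimate; without it the argument has a hole at its crux. (A minor additional point: the final upgrade from $S\geq 0$ on $M\times[0,T]$ to strict positivity requires a genuine strong-maximum-principle argument for the null distribution of $S$ --- you gesture at it but do not supply it, and it is not automatic.)
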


Denote $\Psi(\mathcal{W})=\frac{p_{n-1}}{p_{n-2}}(\k(\mathcal{W}))$, and
$\dot{\Psi}^{kl}$, $\ddot{\Psi}^{kl,pq}$ denote the derivatives of $\Psi$ with respect to the components of $\mathcal{W}=(h_i^j)$.
\begin{lem}\label{lem-3.8}
Along the HMCF (\ref{3.1}), the Weingarten tensor $(h_i^j)$ of $\Sigma_t$ evolves by
\begin{equation}\label{Weingarten-evolution}
\frac{\partial}{\partial t} h_i^j=\dot{\Psi}^{kl}\nabla_k \nabla_l h_i^j +\ddot{\Psi}^{kl,pq} \nabla_i h_{kl}\nabla^j h_{pq}+(\dot{\Psi}^{kl}(h^2)_{kl}+\dot{\Psi}^{kl}g_{kl})h_i^j-2\Psi\d_i^j.
\end{equation}
\end{lem}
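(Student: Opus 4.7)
The plan is to establish this evolution equation by combining three standard ingredients: the raw variation of the Weingarten tensor under a normal deformation in hyperbolic space; the chain rule for the speed function $\Psi(\mathcal{W})$; and a commutator identity for covariant derivatives of the second fundamental form coming from Codazzi plus the Gauss equation.

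\textbf{Step 1 (raw variation of $h_i^j$).} Under $\partial_t X = -\Psi\nu$ one has the standard formulas $\partial_t g_{ij} = -2\Psi h_{ij}$, $\partial_t g^{jk} = 2\Psi h^{jk}$ and $\partial_t\nu = \nabla\Psi$. Combining these with $h_{ij}=-\langle \nu,\bar{\nabla}_i\partial_j X\rangle$ and using the fact that the ambient sectional curvature is $-1$, a direct computation yields
\begin{align*}
\partial_t h_{ij} = \nabla_i\nabla_j \Psi - \Psi(h^2)_{ij} - \Psi g_{ij},
\end{align*}
and hence, after raising the second index,
\begin{align*}
\partial_t h_i^j = \nabla_i\nabla^j\Psi + \Psi(h^2)_i^j - \Psi\d_i^j.
\end{align*}
(As a sanity check I would verify this on the shrinking geodesic sphere, where $\k_i\equiv \coth\rho$ and $\T'(t)=-\coth\T$, recovering $\partial_t h_i^j = -\mathrm{csch}^2\rho\cdot\T'\,\d_i^j$.)

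\textbf{Step 2 (chain rule).} Since $\Psi$ is a smooth symmetric function of the Weingarten tensor,
\begin{align*}
\nabla_m\Psi = \dot{\Psi}^{kl}\nabla_m h_l^k, \qquad \nabla_i\nabla^j\Psi = \dot{\Psi}^{kl}\nabla_i\nabla^j h_l^k + \ddot{\Psi}^{kl,pq}\nabla_i h_{kl}\nabla^j h_{pq},
\end{align*}
so that the gradient-squared term already has the required form, and the task is to reexpress $\dot{\Psi}^{kl}\nabla_i\nabla^j h_l^k$ as $\dot{\Psi}^{kl}\nabla_k\nabla_l h_i^j$ plus algebraic corrections.

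\textbf{Step 3 (commutator identity and contraction with $\dot{\Psi}^{kl}$).} The Codazzi equation in a space form makes $\nabla_m h_{np}$ totally symmetric in $(m,n,p)$, while the Ricci identity combined with the hyperbolic Gauss equation
\begin{align*}
R_{ijkl} = -(g_{ik}g_{jl}-g_{il}g_{jk}) + h_{ik}h_{jl}-h_{il}h_{jk}
\end{align*}
gives, after two applications of Codazzi interspersed with two Ricci commutations,
\begin{align*}
\nabla_i\nabla^j h_l^k - \nabla_k\nabla_l h_i^j
= R_{kil}{}^m h_m{}^j{}\cdot\text{(permutations)} + \text{quadratic-in-}h \text{ terms}.
\end{align*}
Contracting this identity with $\dot{\Psi}^{kl}$ and using the Euler relation $\dot{\Psi}^{kl}h_l^k = \Psi$ for the degree-one homogeneous speed $\Psi = p_{n-1}/p_{n-2}$, the right-hand side organizes precisely into
\begin{align*}
\bigl(\dot{\Psi}^{kl}(h^2)_{kl} + \dot{\Psi}^{kl}g_{kl}\bigr)h_i^j - \Psi(h^2)_i^j - \Psi\d_i^j.
\end{align*}
Substituting back into the formula of Step~1 (via the expansion of Step~2), the $-\Psi(h^2)_i^j$ here cancels the $+\Psi(h^2)_i^j$ produced by raising the index, and the $-\Psi\d_i^j$ combines with the $-\Psi\d_i^j$ already present to give the coefficient $-2\Psi\d_i^j$ in the statement.

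The main obstacle is the careful bookkeeping in Step~3: one must track both the $\e = -1$ piece of the Gauss equation, which is responsible for the $\dot{\Psi}^{kl}g_{kl}h_i^j$ correction together with half of the $-2\Psi\d_i^j$, and the quadratic-in-$h$ piece of Gauss, which after using $\dot{\Psi}^{kl}h_l^k=\Psi$ produces the $\dot{\Psi}^{kl}(h^2)_{kl}h_i^j$ correction and the $\Psi(h^2)_i^j$ cancelation. The cleanest way to execute this is to work in a local frame diagonalizing $\mathcal{W}$, in which $h_i^j = \k_i\d_i^j$ and $\dot{\Psi}^{ii} = \partial\Psi/\partial\k_i$, reducing the tensorial identity to a scalar identity for symmetric functions that follows from Euler's theorem applied to $\Psi$ and its first derivatives.
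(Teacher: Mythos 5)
Your proposal is correct, but it takes a genuinely different route from the paper. The paper simply cites \cite[Lemma 2.4]{Andrews-Wei2017}, which gives, for a general speed $\Psi$, the evolution
\begin{align*}
\frac{\partial}{\partial t} h_i^j=&\dot{\Psi}^{kl}\nabla_k \nabla_l h_i^j +\ddot{\Psi}^{kl,pq} \nabla_i h_{kl}\nabla^j h_{pq}+(\dot{\Psi}^{kl}(h^2)_{kl}+\dot{\Psi}^{kl}g_{kl})h_i^j\\
&-(\dot{\Psi}^{kl} h_{kl}-\Psi)(h^2)_i^j-(\dot{\Psi}^{kl} h_{kl}+\Psi)\d_i^j,
\end{align*}
and then observes that the last two terms collapse to $-2\Psi\d_i^j$ via the Euler relation $\dot{\Psi}^{kl}h_{kl}=\Psi$, which is where the degree-one homogeneity of $\Psi=p_{n-1}/p_{n-2}$ enters. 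You instead re-derive this general formula from scratch: your Step~1 is the correct Gauss--Weingarten variation $\partial_t h_{ij}=\nabla_i\nabla_j\Psi-\Psi g_{ij}-\Psi(h^2)_{ij}$ (the $-\Psi g_{ij}$ being the $\e=-1$ ambient-curvature contribution), Step~2 is the correct chain-rule expansion of $\nabla_i\nabla^j\Psi$, and Step~3 invokes a Simons-type commutator. I verified your claimed intermediate identity
\begin{align*}
\dot\Psi^{kl}\bigl(\nabla_i\nabla^j h_{kl}-\nabla_k\nabla_l h_i^j\bigr)=\bigl(\dot\Psi^{kl}(h^2)_{kl}+\dot\Psi^{kl}g_{kl}\bigr)h_i^j-\Psi(h^2)_i^j-\Psi\d_i^j
\end{align*}
in a frame diagonalizing $\mathcal{W}$; it does hold, and the subsequent cancellations work exactly as you describe. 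Two small remarks: the derivation of the commutator needs only \emph{one} Ricci commutation sandwiched between two uses of Codazzi (not two of each as you say), and the whole computation is quite sensitive to the sign convention for the curvature tensor, which must be chosen consistently with the paper's form of the Gauss equation $R_{ij}{}^{kl}=h_i^kh_j^l-h_i^lh_j^k+\e(\d_i^k\d_j^l-\d_i^l\d_j^k)$. Your approach buys self-containedness at the cost of recomputing a formula already available in the cited reference; the paper's approach is shorter and isolates exactly where the homogeneity of $\Psi$ is used.
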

\begin{proof}
By \cite[Lemma 2.4]{Andrews-Wei2017}, the Weigarten tensor $h_{i}^j$ of $\Sigma_t$ evolves by
\begin{align*}
\frac{\partial}{\partial t} h_i^j=&\dot{\Psi}^{kl}\nabla_k \nabla_l h_i^j +\ddot{\Psi}^{kl,pq} \nabla_i h_{kl}\nabla^j h_{pq}+(\dot{\Psi}^{kl}(h^2)_{kl}+\dot{\Psi}^{kl}g_{kl})h_i^j\\
&-(\dot{\Psi}^{kl} h_{kl}-\Psi)(h^2)_i^j-(\dot{\Psi}^{kl} h_{kl}+\Psi)\d_i^j.
\end{align*}
Then (\ref{Weingarten-evolution}) follows from $\dot{\Psi}^{kl} h_{kl}=\Psi$, since $\Psi$ is homogenous of degree $1$.
\end{proof}

Without resorting to the constant rank theorem as before (see \cite{Yu2016}), here we follow the spirit of the recent work of Wei and the first author \cite{Andrews-Wei2017} to prove that h-convexity is preserved along the HMCF (\ref{3.1}).
\begin{lem}\label{lem-3.9}
Let $\Sigma_t$, $t\in [0,T)$ be a solution of the HMCF (\ref{3.1}) in $\H^n$. If the initial hypersurface $\Sigma$ is h-convex, then the evolving hypersurface $\Sigma_t$ is strictly h-convex for $t\in (0,T)$.
\end{lem}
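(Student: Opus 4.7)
The plan is to invoke Andrews' tensor maximum principle (Theorem \ref{theo-Andrews}) on the shifted Weingarten tensor $S_i^j := h_i^j - \d_i^j$, since h-convexity of $\Sigma_t$ is equivalent to $S\geq 0$. Because $\d_i^j$ is parallel, Lemma \ref{lem-3.8} gives
\begin{align*}
\frac{\partial}{\partial t} S_i^j = \dot{\Psi}^{kl}\nabla_k\nabla_l S_i^j + \ddot{\Psi}^{kl,pq}\nabla_i h_{kl}\nabla^j h_{pq} + \bigl(\dot{\Psi}^{kl}(h^2)_{kl}+\dot{\Psi}^{kl}g_{kl}\bigr)h_i^j - 2\Psi\,\d_i^j,
\end{align*}
with elliptic operator $a^{kl}=\dot{\Psi}^{kl}$ positive definite on the strictly convex cone, which is preserved by Proposition \ref{pro-convergence-HMCF}.

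First I would verify the zeroth-order part of the null-eigenvector condition in (\ref{Andrews-evolution-2}). An elementary calculation gives the harmonic mean expression $\Psi(\k)=p_{n-1}/p_{n-2}=(n-1)/\sum_i\k_i^{-1}$ and $\dot{\Psi}^{ii}=\Psi^2/((n-1)\k_i^2)$, from which
\begin{align*}
\sum_i \dot{\Psi}^{ii}\k_i^2 = \Psi^2, \qquad \sum_i \dot{\Psi}^{ii} = \frac{\Psi^2}{n-1}\sum_i \k_i^{-2} \geq 1,
\end{align*}
using $\sum_i\k_i^{-2}\geq (\sum_i \k_i^{-1})^2/(n-1)=(n-1)/\Psi^2$ by Cauchy-Schwarz. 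At a point where $S$ has a null eigenvector, diagonalise $h$ and take $v=e_1$; then $\k_1=1$, and evaluating the zeroth-order reaction on $v$ yields
\begin{align*}
\Bigl(\sum_k\dot{\Psi}^{kk}\k_k^2 + \sum_k\dot{\Psi}^{kk}\Bigr)\k_1 - 2\Psi \geq \Psi^2 + 1 - 2\Psi = (\Psi-1)^2 \geq 0,
\end{align*}
using also $\Psi\geq 1$ on the h-convex cone (harmonic mean of numbers $\geq 1$).

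Next I would handle the gradient term. The speed $\Psi$ is inverse concave on $(\R_+)^{n-1}$, since $1/\Psi(\k^{-1}) = (n-1)^{-1}\sum_i \k_i$ is linear. Combined with the total symmetry of $\nabla h$ in the space form $\H^n$, one can follow the Andrews--Wei argument in \cite{Andrews-Wei2017} to show
\begin{align*}
\ddot{\Psi}^{kl,pq}\nabla_1 h_{kl}\nabla_1 h_{pq} + \sup_{\L} 2\dot{\Psi}^{kl}\bigl(2\L_k^p\nabla_l h_{1p} - \L_k^p\L_l^q S_{pq}\bigr) \geq 0
\end{align*}
at the null eigenvector. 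Combining this with the zeroth-order estimate and applying Theorem \ref{theo-Andrews}, we conclude $S\geq 0$ on $[0,T)$, i.e., h-convexity is preserved. Strict h-convexity on $(0,T)$ follows from Hamilton's strong tensor maximum principle: a nontrivial null space of $S$ at some $t_0>0$ would be parallel and invariant, forcing $(\Psi-1)^2=0$, hence $\Psi\equiv 1$ and $\k_i\equiv 1$ in the null directions everywhere, incompatible with $\Sigma_{t_0}$ being a closed hypersurface in $\H^n$.

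The main obstacle is the inverse-concavity gradient inequality in the third paragraph. The diagonal terms $\ddot{\Psi}^{ii,jj}$ and the off-diagonal terms $\ddot{\Psi}^{ij,ji}$ for the harmonic mean must be combined with the optimal $\L$-choice and the Codazzi identity $\nabla_k h_{1p} = \nabla_1 h_{kp}$ in $\H^n$ so as to cancel the indefinite contributions from off-diagonal gradient components $\nabla_k h_{1p}$ with $\k_p>1$. The cleanest route is to exploit the linearity of $1/\Psi$ in the variables $\k_i^{-1}$ to rewrite the gradient reaction in terms of derivatives of $h^{-1}$, mirroring the treatment of other inverse-concave speeds in \cite{Andrews-Wei2017}.
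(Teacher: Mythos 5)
Your overall strategy is the same as the paper's: define $S_i^j=h_i^j-\d_i^j$, apply the tensor maximum principle (Theorem \ref{theo-Andrews}) to preserve $S\geq 0$, and upgrade to strict h-convexity via the strong maximum principle together with the fact that a closed hypersurface in $\H^n$ cannot have all $\k_i\equiv 1$. Your zeroth-order check is correct and essentially equivalent to the paper's: you verify $\sum_k\dot\Psi^{kk}\k_k^2=\Psi^2$ and $\sum_k\dot\Psi^{kk}\geq 1$ via Cauchy--Schwarz to get the bound $(\Psi-1)^2\geq 0$; the paper instead observes directly that the reaction coefficient of $\d_i^j$ factors as $\frac{\psi^2}{n-1}\sum_k\frac{(\k_k-1)^2}{\k_k^2}\geq 0$, which is the same quantity but more transparently nonnegative. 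Your strictness argument also lands in the right place, though once $\Psi\equiv 1$ it forces $\k_i\equiv 1$ in \emph{all} directions (harmonic mean of numbers $\geq 1$ equals $1$), not merely null directions.

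The gap is exactly where you flagged it. You state the gradient-term inequality
\[
\ddot\Psi^{kl,pq}\nabla_1 h_{kl}\nabla_1 h_{pq}+\sup_\L 2\dot\Psi^{kl}\bigl(2\L_k^p\nabla_l S_{1p}-\L_k^p\L_l^q S_{pq}\bigr)\geq 0
\]
but defer its verification to ``the Andrews--Wei argument'' and to inverse concavity of $\Psi$, without carrying out the computation. Inverse concavity is indeed the structural reason this works, but it does not make the inequality a black-box consequence: one still has to compute the supremum (attained at $\L_k^p=\nabla_k S_{1p}/S_{pp}$, yielding $2\sum_{k,p>1}\dot\psi^k(\nabla_k S_{1p})^2/S_{pp}$), substitute the explicit second derivatives $\ddot\psi^{kl}=2\psi^{-1}\dot\psi^k\dot\psi^l-2\frac{\dot\psi^k}{\k_k}\d_{kl}$ and the off-diagonal second-variation terms $\frac{\dot\psi^k-\dot\psi^l}{\k_k-\k_l}$, use Codazzi to kill all $\nabla_k h_{11}$ terms, and then observe that after combining, everything reduces to
$2\sum_{k,l>1}\dot\psi^k\bigl(\tfrac{1}{\k_l-1}-\tfrac{1}{\k_l}\bigr)(\nabla_1 h_{kl})^2\geq 0$ plus the manifestly nonnegative $2\psi^{-1}(\nabla_1\Psi)^2$. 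The paper performs precisely this calculation. Your suggested ``cleanest route'' of rewriting in terms of $h^{-1}$ is plausible in spirit but is not what the paper does, and you have not checked that the signs actually work out; as it stands, the heart of the proof is cited rather than proved.
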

\begin{proof}
We show that h-convexity is preserved along the HMCF, and that strict h-convexity holds for $t>0$. Define $S_{ij}:=h_{i}^j-\d_i^j$. Then $h$-convexity is equivalent to $S_{ij}\geq 0$. By (\ref{Weingarten-evolution}), the tensor $S_{ij}$ evolves by
\begin{equation}\label{Sij-evolution}
\begin{split}
\frac{\partial}{\partial t}S_{ij}=&\dot{\Psi}^{kl}\nabla_k \nabla_l S_{ij}+\ddot{\Psi}^{kl,pq}\nabla_i h_{kl} \nabla_j h_{pq} +(\dot{\Psi}^{kl}(h^2)_{kl}+\dot{\Psi}^{kl}g_{kl})S_{ij} \\
&+\dot{\Psi}^{kl}((h^2)_{kl}+g_{kl}-2h_{kl})\d_{i}^{j}.
\end{split}
\end{equation}
To apply the tensor maximum principle, we need to show that (\ref{Andrews-evolution-2}) holds provided that $S_{ij}\geq 0$ and $S_{ij}v^{j}=0$. Let $(x_0,t_0)$ be the point where $S_{ij}$ has a null vector $v$. By continuity, we can assume that $h_i^j$ has all eigenvalues distinct and in increasing order at $(x_0,t_0)$, that is $\k_{n-1}>\k_{n-2}>\cdots>\k_1$. The null eigenvector condition $S_{ij}v^j=0$ implies that $v=e_1$ and $S_{11}=\k_1-1=0$ at $(x_0,t_0)$. The terms in (\ref{Sij-evolution}) which contains $S_{ij}$ satisfies the null vector condition.
Denote $\psi(\k)=\frac{p_{n-1}(\k)}{p_{n-2}(\k)}$, then a direct calculation gives
\begin{align*}
\dot{\psi}^k=&\frac{1}{n-1}\(\frac{p_{n-1}(\k)}{p_{n-2}(\k)}\)^2 \frac{1}{\k_k^2}=\frac{1}{n-1}\frac{\psi^2}{\k_k^2},\\
\ddot{\psi}^{kl}=&\frac{2}{(n-1)^2}\(\frac{p_{n-1}(\k)}{p_{n-2}(\k)}\)^3\frac{1}{\k_k^2\k_l^2}-\frac{2}{n-1}\(\frac{p_{n-1}(\k)}{p_{n-2}(\k)}\)^2\frac{1}{\k_k^3}\d_{kl}\\
=&2\psi^{-1}\dot{\psi}^k \dot{\psi}^l-2\frac{\dot{\psi}^k}{\k_k}\d_{kl}.
\end{align*}
For the last term in (\ref{Sij-evolution}) we have
\begin{align*}
\dot{\Psi}^{kl}((h^2)_{kl}+g_{kl}-2h_{kl})=&\sum_{k}\dot{\psi}^{k}(\k_k^2+1-2\k_k)\\
                                          =&\frac{\psi^2}{n-1}\sum_{k}\frac{(\k_k-1)^2}{\k_k^2} \geq 0.
\end{align*}
Thus, it remains to show that
\begin{align*}
Q_1:=\ddot{\Psi}^{kl,pq}\nabla_1 h_{kl}\nabla_1 h_{pq}+2\sup_{\L} \dot{\Psi}^{kl}(2\L_k^p\nabla_l S_{1p}-\L_k^p\L_l^q S_{pq}) \geq 0.
\end{align*}
Note that $S_{11}=0$ and $\nabla_k S_{11}=0$ at $(x_0,t_0)$, the supremum over $\L$ can be explicitly computed as follows.
\begin{align*}
 &2\dot{\Psi}^{kl}(2 \L_k^p \nabla_l S_{1p}-\L_k^p \L_l^q S_{pq})\\
=&2\sum_{k=1}^{n-1}\sum_{p=2}^{n-1}\dot{\psi}^k (2\L_k^p \nabla_k S_{1p}-(\L_k^p)^2 S_{pp}) \\
=&2\sum_{k=1}^{n-1}\sum_{p=2}^{n-1}\dot{\psi}^k\(\frac{(\nabla_k S_{1p})^2}{S_{pp}}-\(\L_k^p-\frac{\nabla_k S_{1p}}{S_{pp}}\)^2 S_{pp}\).
\end{align*}
Thus the supremum is obtained by taking $\L_k^p=\frac{\nabla_k S_{1p}}{S_{pp}}$. The required inequality for $Q_1$ becomes
\begin{align*}
Q_1=\ddot{\Psi}^{kl,pq}\nabla_1 h_{kl}\nabla_1 h_{pq}+2\sum_{k=1}^{n-1}\sum_{p=2}^{n-1}\dot{\psi}^k\frac{(\nabla_k S_{1p})^2}{S_{pp}}\geq 0.
\end{align*}
By the Codazzi equation we have $\nabla_1 S_{1p}=\nabla_1 h_{1p}=\nabla_p h_{11}=0$ at $(x_0,t_0)$, we have
\begin{align*}
Q_1=&\ddot{\psi}^{kl}\nabla_1 h_{kk}\nabla_1 h_{ll}+2\sum_{k>l} \frac{\dot{\psi}^k-\dot{\psi}^l}{\k_k-\k_l}(\nabla_1 h_{kl})^2 +2\sum_{k>1,l>1}\frac{\dot{\psi}^k}{\k_l-1}(\nabla_1 h_{kl})^2\\
=&2\psi^{-1}(\nabla_1 \Psi)^2-2\sum_{k>1}\frac{\dot{\psi}^k}{\k_k}(\nabla_1 h_{kk})^2+2\sum_{k>l}\frac{\dot{\psi}^k-\dot{\psi}^l}{\k_k-\k_l}(\nabla_1 h_{kl})^2\\
&+2\sum_{k>1,l>1}\frac{\dot{\psi}^k}{\k_l-1}(\nabla_1 h_{kl})^2\\
\geq &-2\sum_{k>1}\frac{\dot{\psi}^k}{\k_k}(\nabla_1 h_{kk})^2-2\sum_{k\neq l>1}\frac{\dot{\psi}^k}{\k_l}(\nabla_1 h_{kl})^2+2\sum_{k>1,l>1}\frac{\dot{\psi}^k}{\k_l-1}(\nabla_1 h_{kl})^2 \\
= &2\sum_{k>1,l>1}\(\frac{\dot{\psi}^k}{\k_l-1}-\frac{\dot{\psi}^k}{\k_l}\)(\nabla_1 h_{kl})^2 \geq 0.
\end{align*}
Here we have used the fact $\nabla_k h_{11}=0$ and the following identity
\begin{align*}
2\sum_{k>l}\frac{\dot{\psi}^k-\dot{\psi}^l}{\k_k-\k_l}(\nabla_1 h_{kl})^2=&-2\frac{\psi^2}{n-1}\sum_{k>l}\frac{\k_k+\k_l}{\k_k^2\k_l^2}(\nabla_1 h_{kl})^2\\
   =&-2\sum_{k>l}\(\frac{\dot{\psi}^k}{\k_l}+\frac{\dot{\psi}^l}{\k_k}\)(\nabla_1 h_{kl})^2\\
   =&-2\sum_{k\neq l>1}\frac{\dot{\psi}^k}{\k_l}(\nabla_1 h_{kl})^2.
\end{align*}
Thus, the tensor maximum principle (Theorem \ref{theo-Andrews}) implies that the h-convexity is preserved along the HMCF.

Finally, we show that $\Sigma_t$ is strictly h-convex for $t>0$. If this is not true, then there exists some interior point $(x_0,t_0)$ such that the smallest principal curvature is $1$. By the strong maximum principle, there exists a parallel vector field $v$ such that $S_{ij}v^iv^j=0$ on $\Sigma_{t_0}$. Then the smallest principal curvature is $1$ on $\Sigma_{t_0}$ everywhere. This contradicts with the fact that on any closed hypersurface in $\H^{n}$, there exists at least one point where all the principal curvatures are strictly larger than one. This completes the proof of Theorem \ref{pro-convergence-HMCF}.
\end{proof}

\section{Proof of Theorem \ref{main-theo-1}}
In this section, we give the proof of Theorem \ref{main-theo-1}. One can check using \cite{Reilly1973}  that along the HMCF (\ref{3.1}), we have
\begin{align}\label{4.1}
\frac{d}{dt}\int_{\Sigma} p_j=\int_{\Sigma} \left[ (n-j-1)p_{j+1}+jp_{j-1} \right]\(-\frac{p_{n-1}}{p_{n-2}}\),\quad 0\leq j\leq n-1.
\end{align}

We first prove the Alexandrov-Fenchel inequality (\ref{1.7}) for curvature integrals in Theorem \ref{main-theo-1}.
\begin{theo}\label{theo-4.1}
Let $0<2k\leq n-1$. If $\Sigma$ is a strictly convex hypersurface in $\H^n$, then
\begin{align}\label{4.2}
\frac{\int_{\Sigma}p_{n-1-2k}}{\omega_{n-1}} \leq \frac{\int_{\Sigma} p_{n-1}}{\omega_{n-1}}\left[ 1- \(\frac{\int_\Sigma p_{n-1}}{\omega_{n-1}}\)^{-\frac{2}{n-1}}\right]^k.
\end{align}
Equality holds in (\ref{4.2}) if and only if $\Sigma$ is a geodesic sphere in $\H^n$.
\end{theo}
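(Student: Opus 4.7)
The plan is to derive the inequality via the harmonic mean curvature flow (\ref{3.1}) and the auxiliary functional $P_k(t)$ defined in (\ref{1.14}). Concretely, I will show (i) $P_k(t)$ is monotone nondecreasing along the flow, and (ii) $\lim_{t\to T^*}P_k(t)=0$; together these give $P_k(0)\leq 0$, which is equivalent to (\ref{4.2}) after multiplying through by $A^{(n-1-2k)/(n-1)}$, where $A(t):=\int_{\Sigma_t}p_{n-1}/\omega_{n-1}$.

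The starting observation is that (\ref{4.1}) with $j=n-1$ is especially clean: the $p_n$ term vanishes, so
\[
\frac{d}{dt}\int_{\Sigma_t}p_{n-1}=-(n-1)\int_{\Sigma_t}p_{n-1}.
\]
Writing $u:=A^{2/(n-1)}$ gives $u'=-2u$, and $P_k$ takes the convenient form
\[
P_k(t)=u^{-\frac{n-1-2k}{2}}B(t)-(u-1)^k,\qquad B(t):=\frac{1}{\omega_{n-1}}\int_{\Sigma_t}p_{n-1-2k}.
\]
The evolution of $B$ from (\ref{4.1}) with $j=n-1-2k$ produces two cross terms involving $\int p_{n-2k}\,p_{n-1}/p_{n-2}$ and $\int p_{n-2-2k}\,p_{n-1}/p_{n-2}$, which must be controlled to establish $P_k'\geq 0$.

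This monotonicity is the technical heart of the proof, and I expect it to be the main obstacle. For the base case $k=1$, the Newton--MacLaurin inequalities (Lemma \ref{lem-2.1}) imply that $p_{j+1}/p_j$ is nonincreasing in $j$ on a strictly convex hypersurface; iterating gives the pointwise bound $p_{n-4}\,p_{n-1}/p_{n-2}\leq p_{n-3}$. Substituting this into $B'$ and using $A=u^{(n-1)/2}$ collapses $P_1'$ to a nonnegative algebraic identity, with equality iff pointwise equality holds in every Newton--MacLaurin step invoked. For general $k\geq 2$, I plan to argue by induction on $k$: analogous iterated Newton--MacLaurin estimates dominate the two cross terms by linear combinations of the quantities $\int p_{n-1-2j}/\omega_{n-1}$ with $j<k$, and then the induction hypothesis, namely (\ref{4.2}) for smaller values of $k'$, allows me to close the estimate. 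This mirrors the structure of the classical proof of the Alexandrov--Fenchel inequalities for strictly convex Euclidean hypersurfaces, carried out in Section 7 of this paper.

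With monotonicity established, Lemma \ref{lem-3.6} gives $A(t)\to 1$ and $B(t)\to 0$ as $t\to T^*$, so $P_k(t)\to 0$; hence $P_k(0)\leq 0$, which is exactly (\ref{4.2}). If equality holds at $t=0$, then $P_k\equiv 0$ and $P_k'\equiv 0$ on $[0,T^*)$, forcing equality in every Newton--MacLaurin step used above; by the equality characterization in Lemma \ref{lem-2.1}, every $\Sigma_t$ is totally umbilical for $t\in[0,T^*)$, so each $\Sigma_t$, and in particular $\Sigma=\Sigma_0$, is a geodesic sphere in $\H^n$.
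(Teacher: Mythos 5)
Your proposal is correct and mirrors the paper's own proof almost step for step: same functional $P_k(t)$, same monotonicity statement, same use of (\ref{4.1}) plus iterated Newton--MacLaurin to handle the two cross terms, same induction on $k$ using (\ref{4.2}) for $k-1$, same passage to the limit via Lemma \ref{lem-3.6}, and same equality analysis via the rigidity in Lemma \ref{lem-2.1}. The substitution $u=A^{2/(n-1)}$ giving $P_k=u^{-(n-1-2k)/2}B-(u-1)^k$ is a tidy cosmetic simplification but does not change the argument.
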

\begin{proof}
We first prove (\ref{4.2}) for $k=1$, i.e.,
\begin{align}\label{4.3}
\frac{\int_{\Sigma}p_{n-3}}{\omega_{n-1}} \leq \frac{\int_{\Sigma} p_{n-1}}{\omega_{n-1}}\left[ 1- \(\frac{\int_\Sigma p_{n-1}}{\omega_{n-1}}\)^{-\frac{2}{n-1}}\right].
\end{align}
The variation formula (\ref{4.1}) and the Newton-MacLaurin inequality (\ref{2.1}) give
\begin{align}\label{4.4}
\frac{d}{dt}\int_{\Sigma} p_{n-1}=-(n-1)\int_{\Sigma} p_{n-1},
\end{align}
and
\begin{equation}\label{4.5}
\begin{split}
\frac{d}{dt}\int_{\Sigma} p_{n-3}=&-2\int_{\Sigma}p_{n-1}-(n-3)\int_{\Sigma}\frac{p_{n-1}p_{n-4}}{p_{n-2}}\\
\geq &-2\int_{\Sigma}p_{n-1}-(n-3)\int_{\Sigma}p_{n-3}.
\end{split}
\end{equation}
By Lemma \ref{lem-3.6}, we have
\begin{align}\label{4.6}
\lim_{t\ra T^\ast}\(\frac{\int_{\Sigma_t}p_{n-1}}{\omega_{n-1}}\)=1.
\end{align}
Combining with (\ref{4.4}), we get
\begin{align}\label{4.7}
\frac{\int_{\Sigma_t}p_{n-1}}{\omega_{n-1}}=e^{(n-1)(T^\ast-t)}\geq 1, \quad \forall t\in [0,T^\ast).
\end{align}
We define the functional
\begin{align*}
P_1(t):=&\(\frac{\int_{\Sigma}p_{n-1}}{\omega_{n-1}}\)^{-\frac{n-3}{n-1}}\left[\frac{\int_{\Sigma}p_{n-3}}{\omega_{n-1}}-\frac{\int_{\Sigma}p_{n-1}}{\omega_{n-1}}\(1-\(\frac{\int_\Sigma p_{n-1}}{\omega_{n-1}}\)^{-\frac{2}{n-1}} \)\right].
\end{align*}
It follows from (\ref{4.4}) and (\ref{4.5}) that
\begin{align*}
\frac{d}{dt}\(\frac{\int_{\Sigma} p_{n-1}}{\omega_{n-1}}\)= -(n-1)\(\frac{\int_{\Sigma} p_{n-1}}{\omega_{n-1}}\),
\end{align*}
and
\begin{align*}
&\frac{d}{dt}\left[\frac{\int_{\Sigma} p_{n-3}}{\omega_{n-1}}-\frac{\int_{\Sigma} p_{n-1}}{\omega_{n-1}}\(1-\(\frac{\int_{\Sigma} p_{n-1}}{\omega_{n-1}}\)^{-\frac{2}{n-1}}\)\right]\\
\geq &-(n-3)\left[\frac{\int_{\Sigma} p_{n-3}}{\omega_{n-1}}-\frac{\int_{\Sigma} p_{n-1}}{\omega_{n-1}}\(1-\(\frac{\int_{\Sigma} p_{n-1}}{\omega_{n-1}}\)^{-\frac{2}{n-1}}\)\right].
\end{align*}
Therefore, we have $\frac{d}{dt}P_1(t)\geq 0$. Together with Lemma \ref{lem-3.6}, we get $P_1(0)\leq \lim_{t\ra T^\ast}P_1(t)=0$, i.e.,
\begin{align}\label{4.9}
\(\frac{\int_{\Sigma}p_{n-1}}{\omega_{n-1}}\)^{-\frac{n-3}{n-1}}\left[\frac{\int_{\Sigma}p_{n-3}}{\omega_{n-1}}-\frac{\int_{\Sigma}p_{n-1}}{\omega_{n-1}}\(1-\(\frac{\int_\Sigma p_{n-1}}{\omega_{n-1}}\)^{-\frac{2}{n-1}} \)\right]\leq 0.
\end{align}
The inequality (\ref{4.3}) then follows from (\ref{4.7}) and (\ref{4.9}).

We prove (\ref{4.2}) for $k\geq 2$ by induction. We assume that (\ref{4.2}) holds for $k-1$, i.e.,
\begin{align}\label{4.10}
\(\frac{\int_{\Sigma}p_{n-1-2(k-1)}}{\omega_{n-1}}\)\leq  \(\frac{\int_\Sigma p_{n-1}}{\omega_{n-1}}\)\left[ 1- \(\frac{\int_\Sigma p_{n-1}}{\omega_{n-1}}\)^{-\frac{2}{n-1}}\right]^{k-1},
\end{align}
then we show that (\ref{4.2}) also holds for $k$. The variation formula (\ref{4.1}) and Newton-MacLaurin inequality (\ref{2.1}) give
\begin{equation}\label{4.11}
\begin{split}
\frac{d}{dt}\int_{\Sigma} p_{n-1-2k}=&-2k\int_{\Sigma}p_{n-2k}\frac{p_{n-1}}{p_{n-2}}-(n-1-2k)\int_{\Sigma} p_{n-2-2k}\frac{p_{n-1}}{p_{n-2}}\\
\geq &-2k\int_{\Sigma}p_{n+1-2k}-(n-1-2k)\int_{\Sigma} p_{n-1-2k}.
\end{split}
\end{equation}
We define the functional
\begin{align*}
P_k(t):=&\(\frac{\int_{\Sigma}p_{n-1}}{\omega_{n-1}}\)^{-\frac{n-1-2k}{n-1}}\frac{\int_{\Sigma}p_{n-1-2k}}{\omega_{n-1}}\\
&-\(\frac{\int_{\Sigma}p_{n-1}}{\omega_{n-1}}\)^\frac{2k}{n-1}\left[1-\(\frac{\int_\Sigma p_{n-1}}{\omega_{n-1}}\)^{-\frac{2}{n-1}} \right]^k.
\end{align*}
We claim that $\frac{d}{dt}P_k(t)\geq 0$. For simplicity, we take
$$
x(t)=\frac{\int_{\Sigma}p_{n-1}}{\omega_{n-1}},\quad y(t)=\frac{\int_{\Sigma}p_{n-1-2k}}{\omega_{n-1}}.
$$
It follows from (\ref{4.10}) and (\ref{4.11}) that
\begin{equation}\label{4.12}
\begin{split}
\frac{d}{dt}y=&\frac{d}{dt}\(\frac{\int_{\Sigma} p_{n-1-2k}}{\omega_{n-1}}\) \\
\geq &-2k\(\frac{\int_{\Sigma}p_{n-1-2(k-1)}}{\omega_{n-1}}\)-(n-1-2k)\(\frac{\int_{\Sigma} p_{n-1-2k}}{\omega_{n-1}}\)\\
\geq &-2k x\(1-x^{-\frac{2}{n-1}}\)^{k-1}-(n-1-2k)y.
\end{split}
\end{equation}
By (\ref{4.4}), we have $\frac{d}{dt}x=-(n-1)x$. A direct calculation gives
\begin{align*}
\frac{d}{dt}\left[ y-x\(1-x^{-\frac{2}{n-1}}\)^k \right]\geq -(n-1-2k)\left[y-x\(1-x^{-\frac{2}{n-1}}\)^k\right],
\end{align*}
and hence $\frac{d}{dt}P_k(t)\geq 0$. By Lemma \ref{lem-3.6} and the monotonicity of $P_k(t)$, we have
\begin{align*}
P_k(0) \leq \lim_{t\ra T^\ast} P_k(t)=0.
\end{align*}
This, together with (\ref{4.7}), shows that the inequality (\ref{4.2}) holds for $k$.

If the equality holds in (\ref{4.2}), then the equality in the Newton-MacLaurin inequality (\ref{2.1}) implies that $\Sigma$ is totally umbilical and hence it is a geodesic sphere in $\H^n$.
\end{proof}

Now we prove the Alexandrov-Fenchel inequality (\ref{1.7}) for quermassintegrals in Theorem \ref{main-theo-1}.
\begin{theo}\label{theo-4.2}
Let $1<2k+1\leq n-1$. If $\Omega$ be a smooth bounded domain in $\H^n$ with strictly convex boundary $\Sigma$, then
\begin{align}\label{4.13}
\frac{W_{n-1-(2k+1)}(\Omega)}{\omega_{n-1}}\leq \frac{k+1}{C_n^2} \int_{1}^{\frac{\int_{\Sigma}p_{n-1}}{\omega_{n-1}}} \(1-s^{-\frac{2}{n-1}}\)^k ds.
\end{align}
Equality holds in (\ref{4.13}) if and only if $\Sigma$ is a geodesic sphere in $\H^n$.
\end{theo}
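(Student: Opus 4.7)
The plan is to derive (\ref{4.13}) as a consequence of the curvature-integral inequality (\ref{4.2}) of Theorem \ref{theo-4.1}, by running the HMCF and comparing the derivatives of $W_{n-1-(2k+1)}(\Omega_t)$ with those of the right-hand side of (\ref{4.13}). Let $\Sigma_t$ denote the HMCF evolution of $\Sigma=\Sigma_0$, let $\Omega_t$ be the enclosed domain, and write
$$r := n-1-(2k+1) = n-2-2k, \qquad x(t) := \frac{\int_{\Sigma_t}p_{n-1}}{\omega_{n-1}}.$$
By (\ref{4.4}), $\dot x = -(n-1)x$, so $x(t)$ is strictly decreasing, and by Lemma \ref{lem-3.6}, $x(t) \to 1$ and $W_r(\Omega_t) \to 0$ as $t \to T^\ast$.

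Next I would differentiate $W_r(\Omega_t)$ along the flow. The first-variation formula for quermassintegrals in a space form under $\partial_t X = F\nu$ reads $\frac{d}{dt}W_r(\Omega_t) = \frac{n-r}{n}\int_{\Sigma_t} F\, p_r\, d\mu$, which for HMCF (where $F = -p_{n-1}/p_{n-2}$) gives
$$\frac{d}{dt}W_r(\Omega_t) = -\frac{2(k+1)}{n}\int_{\Sigma_t} \frac{p_{n-1}}{p_{n-2}}\, p_{n-2-2k}\, d\mu.$$
Applying the Newton-MacLaurin inequality (Lemma \ref{lem-2.1}) in the form that $j \mapsto p_j/p_{j-1}$ is non-increasing gives $p_{n-1}/p_{n-2} \leq p_{n-1-2k}/p_{n-2-2k}$ pointwise on $\Sigma_t$; since $\Sigma_t$ is strictly convex for all $t\in[0,T^\ast)$ by Proposition \ref{pro-convergence-HMCF}, Theorem \ref{theo-4.1} then applies to $\Sigma_t$ and yields
$$\frac{d}{dt}W_r(\Omega_t) \geq -\frac{2(k+1)}{n}\int_{\Sigma_t} p_{n-1-2k}\, d\mu \geq -\frac{2(k+1)\omega_{n-1}}{n}\, x(t)\bigl(1-x(t)^{-\frac{2}{n-1}}\bigr)^k.$$

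I would then use $\dot x = -(n-1)x < 0$ as a change of variable to convert this time-derivative estimate into
$$\frac{dW_r}{dx} \leq \frac{2(k+1)\omega_{n-1}}{n(n-1)}\bigl(1-x^{-\frac{2}{n-1}}\bigr)^k = \frac{(k+1)\omega_{n-1}}{C_n^2}\bigl(1-x^{-\frac{2}{n-1}}\bigr)^k,$$
and integrate in $x$ from $x=1$ (where $W_r=0$ by Lemma \ref{lem-3.6}) up to $x(0) = \int_\Sigma p_{n-1}/\omega_{n-1}$. Dividing by $\omega_{n-1}$ yields (\ref{4.13}).

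For the equality case, if equality holds in (\ref{4.13}), then equality must hold at every $t\in[0,T^\ast)$ both in the Newton-MacLaurin step and in Theorem \ref{theo-4.1} applied to $\Sigma_t$; either equality forces $\Sigma_t$ to be totally umbilical by Lemma \ref{lem-2.1}, hence a geodesic sphere in $\H^n$, so $\Sigma=\Sigma_0$ itself is a geodesic sphere. The main technical point is the clean combination of the first-variation formula for $W_r$ with the Newton-MacLaurin reduction, which collapses the integrand $\frac{p_{n-1}}{p_{n-2}}p_{n-2-2k}$ to the quantity controlled by Theorem \ref{theo-4.1}; once this reduction is made, the rest reduces to a one-dimensional ODE comparison whose boundary data are exactly the limits supplied by Lemma \ref{lem-3.6}.
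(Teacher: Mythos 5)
Your proposal is correct and follows essentially the same path as the paper: apply the first-variation formula for $W_{n-1-(2k+1)}$ under HMCF, reduce the integrand via Newton--MacLaurin to $p_{n-1-2k}$, invoke Theorem~\ref{theo-4.1} for the strictly convex flow hypersurfaces, and compare against the primitive $\int_1^x(1-s^{-2/(n-1)})^k\,ds$ using $\dot x=-(n-1)x$ and the limits from Lemma~\ref{lem-3.6}. The only difference from the paper's write-up is cosmetic: you reparametrize by $x$ and integrate the resulting one-dimensional differential inequality, whereas the paper phrases the same estimate as monotonicity of the functional $\frac{W_{n-1-(2k+1)}(\Omega_t)}{\omega_{n-1}}-\frac{k+1}{C_n^2}\int_1^{x(t)}(1-s^{-2/(n-1)})^k\,ds$; the two are algebraically equivalent.
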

\begin{proof}The variational formula for quermassintegrals (see \cite{Wang-Xia2014}) along the HMCF (\ref{3.1}) is
\begin{align*}
\frac{d}{dt}W_{n-1-(2k+1)}(\Omega)=-\frac{2k+2}{n}\int_{\Sigma}p_{n-1-(2k+1)}\frac{p_{n-1}}{p_{n-2}}.
\end{align*}
Together with the Newton-MacLaurin inequality (\ref{2.1}) and Theorem \ref{theo-4.1}, we get
\begin{equation}\label{4.14}
\begin{split}
\frac{d}{dt}\(\frac{W_{n-1-(2k+1)}(\Omega)}{\omega_{n-1}}\)\geq& -\frac{2k+2}{n}\(\frac{\int_{\Sigma}p_{n-1-2k}}{\omega_{n-1}}\)\\
\geq &-\frac{2k+2}{n}\(\frac{\int_\Sigma p_{n-1}}{\omega_{n-1}}\)\left[ 1- \(\frac{\int_\Sigma p_{n-1}}{\omega_{n-1}}\)^{-\frac{2}{n-1}}\right]^k.
\end{split}
\end{equation}
By (\ref{4.4}), we have
\begin{align*}
\frac{d}{dt}\int_1^{\frac{\int_\Sigma p_{n-1}}{\omega_{n-1}}} \(1-s^{-\frac{2}{n-1}}\)^k ds=-(n-1)\(\frac{\int_\Sigma p_{n-1}}{\omega_{n-1}}\)\left[1-\(\frac{\int_\Sigma p_{n-1}}{\omega_{n-1}}\)^{-\frac{2}{n-1}}\right]^k.
\end{align*}
Thus we have
\begin{align}\label{4.15}
&\frac{d}{dt}\left[\frac{W_{n-1-(2k+1)}(\Omega)}{\omega_{n-1}}-\frac{k+1}{C_n^2}\int_1^{\frac{\int_\Sigma p_{n-1}}{\omega_{n-1}}} \(1-s^{-\frac{2}{n-1}}\)^k \right]\geq 0.
\end{align}
By Lemma \ref{lem-3.6}, we have
\begin{align*}
\lim_{t\ra T^\ast}\left[\frac{W_{n-1-(2k+1)}(\Omega_t)}{\omega_{n-1}}-\frac{k+1}{C_n^2}\int_1^{\frac{\int_{\Sigma_t} p_{n-1}}{\omega_{n-1}}}\(1-s^{-\frac{2}{n-1}}\)^k \right]=0.
\end{align*}
This, together with (\ref{4.15}), gives (\ref{4.13}). If the equality holds in (\ref{4.13}), then the equality in the Newton-MacLaurin inequality (\ref{2.1}) implies that $\Sigma$ is totally umbilical and hence it is a geodesic sphere in $\H^n$.
\end{proof}

\begin{rem}\label{rem-4.3}
We should notice that Theorem \ref{theo-4.1} is equivalent to Theorem \ref{theo-4.2} in the following sense. From the proof of Theorem \ref{theo-4.2}, it is easy to see that (\ref{4.13}) can be deduced from (\ref{4.2}) in Theorem \ref{theo-4.1}. On the other hand, it follows from (\ref{2.5}) that
\begin{align}\label{4.16}
\frac{1}{n}\(\frac{\int_{\Sigma}p_{n-1-2k}}{\omega_{n-1}}\)=\(\frac{W_{n-2k}(\Omega)}{\omega_{n-1}}\)+\frac{n-1-2k}{2k+2}\(\frac{W_{n-2k-2}(\Omega)}{\omega_{n-1}}\).
\end{align}
By (\ref{4.13}) in Theorem \ref{theo-4.2}, we have
\begin{align*}
\frac{1}{n}\(\frac{\int_{\Sigma}p_{n-1-2k}}{\omega_{n-1}}\)=&\(\frac{W_{n-2k}(\Omega)}{\omega_{n-1}}\)+\frac{n-1-2k}{2k+2}\(\frac{W_{n-2k-2}(\Omega)}{\omega_{n-1}}\)\\
\leq &\frac{k}{C_n^2}\int_{1}^{\frac{\int_{\Sigma}p_{n-1}}{\omega_{n-1}}} \(1-s^{-\frac{2}{n-1}}\)^{k-1} ds\\
     &+\frac{(k+1)(n-1-2k)}{C_n^2 (2k+2)}\int_{1}^{\frac{\int_{\Sigma}p_{n-1}}{\omega_{n-1}}} \(1-s^{-\frac{2}{n-1}}\)^k ds\\
=&\frac{1}{C_n^2} \int_1^{\frac{\int_{\Sigma}p_{n-1}}{\omega_{n-1}}} \left[ k(1-s^{-\frac{2}{n-1}})^{k-1}+\frac{n-1-2k}{2}(1-s^{-\frac{2}{n-1}})^{k}\right] ds\\
=&\frac{1}{n}\(\frac{\int_{\Sigma}p_{n-1}}{\omega_{n-1}}\)\left[1-\(\frac{\int_{\Sigma}p_{n-1}}{\omega_{n-1}}\)^{-\frac{2}{n-1}}\right]^k,
\end{align*}
which gives (\ref{4.2}) in Theorem \ref{theo-4.1}.
\end{rem}

\section{Proof of Theorem \ref{main-theo-2}}
In the section, we take $\e=-1$ in (\ref{2.10}):
\begin{align}\label{5.1}
\~L_k(\k)=\sum_{i=0}^{k}C_k^i(-1)^i p_{2k-2i}(\k), \quad \~N_k(\k)=\sum_{i=0}^{k}C_k^i(-1)^i p_{2k-2i+1}(\k).
\end{align}
It is obvious that if $\k \in  \{ \l\in \R^{n-1} ~|~\l_i\geq 1\}$, then
\begin{align}\label{5.2}
\k^{-1}=(\k_1^{-1},\cdots,\k_{n-1}^{-1})\in  \{ \l \in \R^{n-1} ~|~0<\l_i\leq 1\}.
\end{align}
The following lemma follows from \cite[Remark 4.4]{Ge-Wang-Wu2014}.
\begin{lem}\label{lem-5.1}
If $\k\in \{ \l\in \R^{n-1} ~|~\l_i\geq 1\}$, then
\begin{align}\label{5.3}
(-1)^{k-1}\left[\~N_k(\k^{-1})-p_1(\k^{-1})\~L_{k}(\k^{-1})\right] \leq 0,
\end{align}
Equality holds in (\ref{5.3}) if and only if one of the following two cases holds:
\begin{enumerate}[(i)]
\item $\k_i=\k_j,\quad \forall i,j$;
\item if $k\geq 2$, there exist at most $k-1$ elements with $\k_i>1$, while the remaining elements equal to $1$.
\end{enumerate}
\end{lem}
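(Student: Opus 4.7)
Setting $\mu := \kappa^{-1} \in (0,1]^{n-1}$ and $\Phi_k(\mu) := p_1(\mu)\~L_k(\mu) - \~N_k(\mu)$, the inequality to prove is $(-1)^{k-1}\Phi_k(\mu) \geq 0$. A direct expansion of (\ref{5.1}) gives
\[
(-1)^k \Phi_k(\mu) \;=\; \sum_{j=0}^{k} (-1)^j \binom{k}{j}\, q_j(\mu), \qquad q_j(\mu) := p_1(\mu)\,p_{2j}(\mu) - p_{2j+1}(\mu).
\]
The Newton--MacLaurin inequalities of Lemma \ref{lem-2.1}, applied to the positive vector $\mu$, yield $q_j(\mu) \geq 0$ (using $p_1 \geq p_{2j}^{1/(2j)}$ together with $p_{2j+1} \leq p_{2j}^{(2j+1)/(2j)}$), with $q_0 \equiv 0$. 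The problem has thus been reduced to producing the correct sign for a $k$-th finite difference of the nonnegative sequence $\{q_j\}_{j \geq 0}$.

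My approach would be induction on $k$. The base case $k=1$ is exactly $p_1 p_2 \geq p_3$, immediate from Lemma \ref{lem-2.1}, with equality only when all $\mu_i$ coincide. For the inductive step, the essential tool is the differentiation identity
\[
\partial_{\mu_\ell}\, \~L_k(\mu) \;=\; \frac{2k}{n-1}\,\~N_{k-1}(\mu\,|\,\ell),
\]
where $\~N_{k-1}$ is evaluated on the $(n-2)$-tuple $\mu\,|\,\ell$ obtained by deleting $\mu_\ell$; this follows from $\partial_{\mu_\ell} p_m(\mu) = \tfrac{m}{n-1}\,p_{m-1}(\mu\,|\,\ell)$ together with the Pascal identity $(k-i)\binom{k}{i} = k\binom{k-1}{i}$. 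A companion identity for $\partial_{\mu_\ell}\~N_k$, which mixes $\~L_k$ and $\~L_{k-1}$ on $\mu\,|\,\ell$, permits one either to differentiate $\Phi_k$ and invoke the inductive hypothesis in $n-2$ variables, or to build a telescoping decomposition of $(-1)^{k-1}\Phi_k$ as a nonnegative combination of products $(1-\mu_i)(1-\mu_j)$ times lower-order symmetric expressions.

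For the equality analysis, case (i) is direct: if $\mu_i \equiv c$, then $p_m(\mu) = c^m$, so $\~L_k = (c^2-1)^k$ and $\~N_k = c(c^2-1)^k$, giving $\Phi_k = 0$. Case (ii) follows from the generating-function factorization $\prod_j(1+\mu_j z) = (1+z)^{n-1-m}\prod_{i=1}^m(1+\mu_i z)$, valid when $n-1-m$ of the entries of $\mu$ equal $1$: unwinding the binomial sums then shows that both $\~L_k$ and $\~N_k$ vanish of order $\geq k$ at the point $(1,\ldots,1)$, hence $\~L_k(\mu) = \~N_k(\mu) = 0$ whenever at most $k-1$ of the $\mu_i$ differ from $1$, which forces $\Phi_k = 0$. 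The principal obstacle is the inductive step itself: the pointwise bound $q_j \geq 0$ is too weak to control the alternating sum directly, so one really needs a structural (SOS-type) identity expressing $(-1)^{k-1}\Phi_k$ as a sum of nonnegative terms that exhibits the vanishing on the boundary faces $\{\mu_i = 1\}$. This delicate combinatorial identity is precisely the content of \cite[Remark 4.4]{Ge-Wang-Wu2014}, to which we appeal.
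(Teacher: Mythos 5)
You ultimately appeal to the same Ge--Wang--Wu combinatorial identity \cite[Remark 4.4]{Ge-Wang-Wu2014} that the paper's proof rests on, so the approach is essentially the same; your preliminary finite-difference expansion, the $q_j\geq 0$ bounds, and the differentiation identity are a reasonable warm-up but, as you yourself note, do not close the alternating sum on their own. The paper is simply more explicit: it records the identity as equation (\ref{5.4}), a sum over $(2k+1)$-tuples of products $\kappa_{i_1}(\kappa_{i_2}\kappa_{i_3}-1)\cdots(\kappa_{i_{2k-2}}\kappa_{i_{2k-1}}-1)(\kappa_{i_{2k}}-\kappa_{i_{2k+1}})^2$, from which both the sign for $\kappa^{-1}\in(0,1]^{n-1}$ and the two equality cases are read off directly.
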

\begin{proof}
The proof follows from the crucial observations due to Ge-Wang-Wu \cite{Ge-Wang-Wu2014}:

The inequality $\~N_k(\k)-p_1(\k)\~L_{k}(\k)\leq 0$ is equivalent to
\begin{align}\label{5.4}
\sum_{\substack{1\leq i_m \leq n-1 \\ i_j\neq i_l (j\neq l)}}\k_{i_1}(\k_{i_2}\k_{i_3}-1)(\k_{i_4}\k_{i_5}-1)\cdots (\k_{i_{2k-2}}\k_{i_{2k-1}}-1)(\k_{i_{2k}}-\k_{i_{2k+1}})^2 \geq 0,
\end{align}
where the summation takes over all $(2k+1)$-elements permutation of $\{1,\cdots,n-1\}$. Together with (\ref{5.2}), we obtain the desired conclusion.
\end{proof}

As a direct corollary, we have the following inequalities, which will be used in establishing the monotonicity of $Q_k(t)$ along the HMCF.
\begin{cor}\label{cor-2}
If $\k\in \{ \l\in \R^{n-1} ~|~\l_i\geq 1\}$, then
\begin{align}\label{5.5}
\sum_{i=0}^{k}C_k^i(-1)^i\left[p_{n-2-2i}(\k)\(-\frac{p_{n-1}(\k)}{p_{n-2}(\k)}\)+p_{n-1-2i}(\k)\right] \leq 0,
\end{align}
Equality holds in (\ref{5.5}) if and only if one of the following two cases holds:
\begin{enumerate}[(i)]
\item $\k_i=\k_j,\quad \forall i,j$;
\item if $k\geq 2$, there exist at most $k-1$ elements with $\k_i>1$, while the remaining elements equal to $1$.
\end{enumerate}
\end{cor}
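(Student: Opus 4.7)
The plan is to reduce Corollary \ref{cor-2} directly to Lemma \ref{lem-5.1} via the classical duality
$$p_j(\k^{-1}) = \frac{p_{n-1-j}(\k)}{p_{n-1}(\k)},$$
which holds for any $\k$ with strictly positive entries and, in particular, when $\k_i\geq 1$.

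First I would clear the $p_{n-2}(\k)$ denominator in (\ref{5.5}). Writing $A := \sum_{i=0}^{k} C_k^i(-1)^i p_{n-2-2i}(\k)$ and $B := \sum_{i=0}^{k} C_k^i(-1)^i p_{n-1-2i}(\k)$, the target becomes
$$p_{n-2}(\k)\,B - p_{n-1}(\k)\,A \leq 0,$$
a comparison between two alternating sums of normalized mean curvatures.

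Next, I would express the quantity inside Lemma \ref{lem-5.1} in terms of $A$ and $B$. Substituting the duality identity and reindexing each sum with $j = k-i$, a short calculation gives
$$\~L_k(\k^{-1}) = \frac{(-1)^k\,B}{p_{n-1}(\k)}, \quad \~N_k(\k^{-1}) = \frac{(-1)^k\,A}{p_{n-1}(\k)}, \quad p_1(\k^{-1}) = \frac{p_{n-2}(\k)}{p_{n-1}(\k)},$$
and consequently
$$(-1)^{k-1}\bigl[\~N_k(\k^{-1}) - p_1(\k^{-1})\~L_k(\k^{-1})\bigr] = -\frac{p_{n-1}(\k)\,A - p_{n-2}(\k)\,B}{p_{n-1}(\k)^2}.$$
Lemma \ref{lem-5.1} asserts the left-hand side is $\leq 0$, which rearranges to exactly the desired $p_{n-1}(\k)\,A \geq p_{n-2}(\k)\,B$.

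For the equality characterization, since the reduction above is an algebraic identity up to the positive factor $p_{n-1}(\k)^{-2}$, the equality case in (\ref{5.5}) is inherited verbatim from Lemma \ref{lem-5.1}. The two structural conditions (all $\k_i$ equal, or at most $k-1$ components strictly exceeding $1$) are preserved under $\k \mapsto \k^{-1}$, since $\k_i = 1$ if and only if $\k_i^{-1} = 1$. I do not anticipate a substantive obstacle here; the only delicate point is keeping track of the parities $(-1)^i$ versus $(-1)^{k-i}$ when reindexing the two sums in the second step.
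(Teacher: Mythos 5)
Your proof is correct and follows the same route as the paper: applying the duality $p_j(\k^{-1}) = p_{n-1-j}(\k)/p_{n-1}(\k)$ to rewrite the quantity in Lemma \ref{lem-5.1} in terms of $\k$ and then clearing the positive denominators. The paper states this more tersely (computing only $\~L_k(\k^{-1})$ and declaring that the conclusion follows), whereas you spell out the reindexing $j=k-i$ and the $(-1)^k$ parity factors explicitly, which is a welcome verification; the equality characterization indeed transfers verbatim since the reduction is an identity up to the positive factor $p_{n-1}(\k)^{-2}$.
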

\begin{proof}It is obvious that
\begin{align*}
p_j(\k^{-1})=\frac{p_{n-1-j}(\k)}{p_{n-1}(\k)}.
\end{align*}
Thus we have
\begin{align*}
\~L_k(\k^{-1})=\sum_{i=0}^{k}C_k^i(-1)^i p_{2k-2i}(\k^{-1})=\frac{1}{p_{n-1}(\k)}\sum_{i=0}^{k}C_k^i(-1)^i p_{n-1-2k+2i}(\k),
\end{align*}
and the conclusion follows directly from Lemma \ref{lem-5.1}.
\end{proof}

Now we prove the monotonicity of the functional
$$
Q_k(t)=\(\int_{\Sigma}p_{n-1}\)^{-\frac{n-1-2k}{n-1}}\left[\sum_{i=0}^{k}C_k^i(-1)^i\int_{\Sigma} p_{n-1-2i}\right],\quad 0< 2k< n-1
$$
along the HMCF (\ref{3.1}).
\begin{lem}\label{lem-5.2} Let $\Sigma$ be a h-convex hypersurface in $\H^n$. Along the HMCF (\ref{3.1}), the quantity $Q_k(t)$ is monotone decreasing. Moreover, $\frac{d}{dt}Q(t)=0$ at some time $t$ if and only if the equality holds in (\ref{5.5}) everywhere on $\Sigma_t$.
\end{lem}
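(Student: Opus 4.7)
The plan is to differentiate $Q_k(t)$ directly and reduce the integrand to the pointwise expression appearing in Corollary \ref{cor-2}. Write $G(t)=\int_{\Sigma_t}p_{n-1}$ and $F(t)=\sum_{i=0}^{k}C_k^i(-1)^i\int_{\Sigma_t}p_{n-1-2i}$, so that $Q_k=G^{-\frac{n-1-2k}{n-1}}F$. From (\ref{4.1}) we already know $G'=-(n-1)G$ by (\ref{4.4}), so
\begin{align*}
\frac{d}{dt}Q_k=G^{-\frac{n-1-2k}{n-1}}\bigl[F'+(n-1-2k)F\bigr].
\end{align*}
Hence it suffices to show $F'+(n-1-2k)F\le 0$ pointwise under the integral sign.

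Next I would unpack $F'$. Applying (\ref{4.1}) with $j=n-1-2i$ gives
\begin{align*}
\frac{d}{dt}\int_{\Sigma_t}p_{n-1-2i}=\int_{\Sigma_t}\bigl[2i\,p_{n-2i}+(n-1-2i)p_{n-2-2i}\bigr]\Bigl(-\frac{p_{n-1}}{p_{n-2}}\Bigr).
\end{align*}
The crucial step is a combinatorial reindexing of the sum $\sum_{i=0}^{k}C_k^i(-1)^i\bigl[2i\,p_{n-2i}+(n-1-2i)p_{n-2-2i}\bigr]$. Using the identity $(j+1)C_k^{j+1}=kC_{k-1}^{j}$ to reindex the $p_{n-2i}$ terms (shift $j=i-1$), and the splitting $n-1-2i=(n-1-2k)+2(k-i)$ together with $(k-i)C_k^i=kC_{k-1}^i$ for the $p_{n-2-2i}$ terms, the two ``$k$-weighted'' contributions cancel and one is left with
\begin{align*}
\sum_{i=0}^{k}C_k^i(-1)^i\bigl[2i\,p_{n-2i}+(n-1-2i)p_{n-2-2i}\bigr]=(n-1-2k)\sum_{i=0}^{k}C_k^i(-1)^i p_{n-2-2i}.
\end{align*}
Consequently
\begin{align*}
F'+(n-1-2k)F=(n-1-2k)\int_{\Sigma_t}\sum_{i=0}^{k}C_k^i(-1)^i\Bigl[p_{n-2-2i}\Bigl(-\tfrac{p_{n-1}}{p_{n-2}}\Bigr)+p_{n-1-2i}\Bigr].
\end{align*}

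Finally, by Lemma \ref{lem-3.9}, h-convexity of the initial hypersurface is preserved along the HMCF, so the principal curvatures of $\Sigma_t$ lie in $\{\lambda\in\R^{n-1}\mid\lambda_i\ge 1\}$ for every $t$. Corollary \ref{cor-2} then applies pointwise and shows the integrand is $\le 0$; since $0<2k<n-1$ the prefactor $n-1-2k$ is positive, so $F'+(n-1-2k)F\le 0$ and $\frac{d}{dt}Q_k(t)\le 0$, as required. Moreover the derivative vanishes at time $t$ exactly when the integrand vanishes identically on $\Sigma_t$, which by Corollary \ref{cor-2} is precisely the equality case of (\ref{5.5}) at every point of $\Sigma_t$.

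The main obstacle is the combinatorial cancellation step: without it the expression for $F'+(n-1-2k)F$ does not obviously match the form appearing in (\ref{5.5}), and it is only after this collapse that Corollary \ref{cor-2} (and through it the Ge--Wang--Wu observation encoded in Lemma \ref{lem-5.1}) can be applied. Once this algebraic reduction is carried out, the rest is bookkeeping together with the preservation of h-convexity supplied by Lemma \ref{lem-3.9}.
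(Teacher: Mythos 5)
Your proposal is correct and follows essentially the same route as the paper: the combinatorial collapse you perform (using $(j+1)C_k^{j+1}=(k-j)C_k^j$) is exactly the paper's identity leading to (5.6), after which both arguments invoke the preservation of h-convexity (Lemma \ref{lem-3.9}) and the pointwise inequality (\ref{5.5}) from Corollary \ref{cor-2} to close the estimate. The only cosmetic difference is that you differentiate $Q_k=G^{-\frac{n-1-2k}{n-1}}F$ explicitly, whereas the paper obtains the same conclusion by pairing the two one-sided inequalities (\ref{4.4}) and (\ref{5.7}).
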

\begin{proof}
By the variation formula (\ref{4.1}) along the HMCF (\ref{3.1}), we have
\begin{align*}
&\frac{d}{dt}\left[\int_{\Sigma} \sum_{i=0}^{k}C_k^i(-1)^i p_{n-1-2i} \right]\\
=&\int_{\Sigma} \sum_{i=0}^{k} C_k^i(-1)^i\left[2i p_{n-2i}+(n-1-2i)p_{n-2-2i} \right]\(-\frac{p_{n-1}}{p_{n-2}}\)\\
=&\int_{\Sigma} \sum_{i=0}^{k} C_k^i(-1)^i(n-1-2i)p_{n-2-2i}\(-\frac{p_{n-1}}{p_{n-2}}\) \\
 &+\int_{\Sigma} \sum_{j=0}^{k-1} C_k^{j+1} (-1)^{j+1} (2j+2)p_{n-2-2j}\(-\frac{p_{n-1}}{p_{n-2}}\) \\
=&\int_{\Sigma} \sum_{i=0}^{k} C_k^i(-1)^i(n-1-2k)p_{n-2-2i}\(-\frac{p_{n-1}}{p_{n-2}}\) \\
 &-\int_{\Sigma} \sum_{j=0}^{k-1} 2(-1)^{j+1} \left[C_k^j (k-j)-C_k^{j+1}(j+1)\right]p_{n-2-2j}\(-\frac{p_{n-1}}{p_{n-2}}\).
\end{align*}
Together with the identities
$$
C_k^j (k-j)-C_k^{j+1}(j+1)=0, \quad 0\leq j \leq k-1,
$$
we obtain
\begin{equation}\label{5.6}
\frac{d}{dt}\left[\int_{\Sigma} \sum_{i=0}^{k}C_k^i(-1)^i p_{n-1-2i} \right]=(n-1-2k)\int_{\Sigma} \sum_{i=0}^{k} C_k^i(-1)^ip_{n-2-2i}\(-\frac{p_{n-1}}{p_{n-2}}\).
\end{equation}
As the initial hypersurface $\Sigma$ is h-convex, it follows from Lemma \ref{lem-3.9} that $\Sigma_t$ is strictly h-convex for $t\in (0,T^\ast)$.
Together with (\ref{5.5}) we deduce that
\begin{align}\label{5.7}
\frac{d}{dt}\left[\int_{\Sigma} \sum_{i=0}^{k}C_k^i(-1)^i p_{n-1-2i} \right]\leq -(n-1-2k)\left[ \int_{\Sigma} \sum_{i=0}^{k}C_k^i(-1)^i p_{n-1-2i} \right].
\end{align}
The inequalities (\ref{4.4}) and (\ref{5.7}) imply that $\frac{d}{dt}Q_k(t)\leq 0$. If $\frac{d}{dt}Q_k(t)=0$ at some time $t$, then the equality holds in (\ref{5.5}) everywhere on $\Sigma_t$.
\end{proof}

Now we are in a position to prove Theorem \ref{main-theo-2}.
\begin{proof}[Proof of Theorem \ref{main-theo-2}]
Let $\Sigma_t$ be the evolving hypersurface for $t\in [0,T)$ along the HMCF (\ref{3.1}), and denote by $\Omega_t$ the domain enclosed by $\Sigma_t$. It follows from Lemma \ref{lem-3.6} that
\begin{align*}
\lim_{t\ra T^\ast}Q_k(t)=\lim_{t\ra T^\ast}\left[\(\int_{\Sigma}p_{n-1}\)^{-\frac{n-1-2k}{n-1}}\int_{\Sigma} \sum_{i=0}^{k}C_k^i(-1)^i p_{n-1-2i}\right]=\omega_{n-1}^{\frac{2k}{n-1}}.
\end{align*}
By Lemma \ref{lem-5.2}, $Q_k(t)$ is monotone decreasing along the HMCF (\ref{3.1}). We obtain
\begin{align*}
Q_k(0) \geq Q_k(t) \geq \lim_{t\ra T^\ast}Q_k(t)=\omega_{n-1}^{\frac{2k}{n-1}}.
\end{align*}
Together with (\ref{4.7}), we deduce that
\begin{align}\label{5.8}
\sum_{i=0}^{k}C_k^i (-1)^i\int_{\Sigma} p_{n-1-2i}\geq \omega_{n-1}^{\frac{2k}{n-1}}\(\int_{\Sigma}p_{n-1}\)^{\frac{n-1-2k}{n-1}}.
\end{align}

If the equality holds in (\ref{5.8}), then $Q_k(t)\equiv \omega_{n-1}^{\frac{2k}{n-1}}$ for all $t\in[0,T^\ast)$. By Lemma \ref{lem-5.2}, the equality holds in (\ref{5.5}) everywhere on $\Sigma_t$, $t\in [0,T^\ast)$. By Lemma \ref{lem-3.9}, the evolving hypersurface $\Sigma_t$ is strictly h-convex for $t\in (0,T^\ast)$, which excludes the case (ii) in Corollary \ref{cor-2}. Thus, we conclude that $\Sigma_t$ is totally umbilical and hence a geodesic sphere in $\H^n$ for $t\in (0,T^\ast)$. As $t\ra 0$, the initial hypersurface $\Sigma$ is smoothly approximated by a family of geodesic spheres, and thus it is also a geodesic sphere in $\H^n$. It is easy to see that if $\Sigma$ is a geodesic sphere of radius $\rho$, then $p_k=\coth^k \rho$ and
\begin{align*}
\sum_{i=0}^{k}C_k^i (-1)^i\int_{\Sigma} p_{n-1-2i}=&\sum_{i=0}^{k}C_k^i (-1)^i \omega_{n-1} \coth^{n-1-2i} \rho\sinh^{n-1} \rho\\
=&\omega_{n-1}\cosh^{n-1}\rho\sum_{i=0}^{k}C_k^i (-1)^i (\tanh^2 \rho)^i \\
=&\omega_{n-1}\cosh^{n-1-2k}\rho,
\end{align*}
and
\begin{align*}
\omega_{n-1}^{\frac{2k}{n-1}}\(\int_{\Sigma}p_{n-1}\)^{\frac{n-1-2k}{n-1}}=\omega_{n-1}\cosh^{n-1-2k}\rho.
\end{align*}
Thus, the equality holds in (\ref{5.8}) on a geodesic sphere. This completes the proof of Theorem \ref{main-theo-2}.
\end{proof}

\section{Proof of Theorem \ref{main-theo-3}}
In this section, we will use the inverse mean curvature flow and Theorem \ref{main-theo-1} to give the proof of Theorem \ref{main-theo-3}. Let $X_0:M^{n-1}\ra \H^n$ be a smooth embedding such that $\Sigma=X_0(M)$ is a closed smooth hypersurface in $\H^n$. We consider the smooth family of immersions $X:M \times [0,T)\ra \H^{n}$ evolving along the inverse mean curvature flow (IMCF):
\begin{align}\label{6.1}\left\{ \begin{aligned}
\frac{\partial}{\partial t}X(x,t)=&\frac{1}{H(x,t)}\nu(x,t),\\
X(\cdot,0)=&X_0(\cdot),\end{aligned}\right.
\end{align}
where $H(x,t)$ is the mean curvature and $\nu(x,t)$ is the unit outward normal vector of the hypersurface $\Sigma_t=X(M,t)$, respectively. Along the IMCF (\ref{6.1}), we have the following evolution equations on the Weingarten tensor $\mathcal{W}=(h_i^j)$ of $\Sigma_t$ (see e.g. \cite{Hu-Li2018}):
\begin{align}\label{6.2}
\frac{\partial}{\partial t}h_i^j=\frac{1}{H^2}\D h_{i}^{j}-\frac{2}{H^3}\nabla_i H\nabla^j H+\frac{1}{H^2}(|A|^2+n-1)h_i^j-\frac{2}{H}(h^2)_i^j.
\end{align}
By the variational formula \cite{Reilly1973}, one can check that along the IMCF (\ref{6.1}) we have
\begin{align}\label{6.3}
\frac{d}{dt}\int_{\Sigma}p_k=\int_{\Sigma} \( (n-1-k)p_{k+1}+k p_{k-1}\)\frac{1}{(n-1)p_1}, \quad k=0,\cdots,n-1.
\end{align}

In a recent work \cite{Hu-Li2018}, the second and third authors showed that the nonnegativity of the sectional curvature of the evolving hypersurfaces is preserved along the IMCF (\ref{6.1}). The proof relies on a crucial property that the strict convexity of the evolving hypersurface is also preserved along the IMCF. This property can also be deduced directly from the pinching estimate, see \cite[Lemma 3.2]{Wei2018}.
\begin{lem}\label{lem-6.1}
Let $\Sigma_t$, $t\in [0,T)$ be a solution of the IMCF (\ref{6.1}) in $\H^n$. If the initial hypersurface $\Sigma$ is strictly convex, then the evolving hypersurface $\Sigma_t$ is strictly convex for $t\in (0,T)$.
\end{lem}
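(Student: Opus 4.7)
The plan is to apply the tensor maximum principle of Theorem \ref{theo-Andrews} directly to the Weingarten tensor $S_i^{j} := h_i^{j}$, using the evolution equation (\ref{6.2}) and following the same strategy as in the proof of Lemma \ref{lem-3.9}. Suppose, for contradiction, that strict convexity first fails at some $(x_0, t_0) \in M \times (0, T)$. Choose an orthonormal frame at $x_0$ in which $h_i^{j}$ is diagonal with eigenvalues $0 = \k_1 < \k_2 \leq \cdots \leq \k_{n-1}$ (perturbing slightly if needed so that the smallest eigenvalue is simple at the critical point), and take the null direction to be $v = e_1$.

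From (\ref{6.2}) the diffusion coefficient is $a^{kl} = H^{-2}g^{kl}$, which is positive definite, and the reaction term contracts at the null direction to
\begin{align*}
N_{11} = -\frac{2}{H^3}(\nabla_1 H)^2,
\end{align*}
since the $\k_1$-factors in the $(|A|^2 + n-1)h_i^{j}$ and $(h^2)_i^{j}$ terms vanish. The content of the argument is therefore to produce enough positivity from the supremum term in (\ref{Andrews-evolution-2}) to compensate this single indefinite contribution.

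The decisive input is the Codazzi relation $\nabla_i h_{jk} = \nabla_j h_{ik}$ in the constant-curvature ambient. Since $\nabla_k S_{11} = 0$ at the spatial minimum of $\k_1$, Codazzi gives
\begin{align*}
\nabla_1 H = \sum_{k=1}^{n-1}\nabla_1 h_{kk} = \sum_{k \geq 2} \nabla_k h_{1k}.
\end{align*}
Taking the optimal $\Lambda_k^{p} = \nabla_k S_{1p}/S_{pp}$ for $p \geq 2$ (the $p=1$ case is harmless because $S_{11}=0$ forces $\nabla_k S_{11}=0$, which holds automatically), the supremum in (\ref{Andrews-evolution-2}) is bounded below by $\frac{2}{H^2}\sum_{p \geq 2}\frac{(\nabla_p h_{1p})^2}{\k_p}$. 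A single Cauchy--Schwarz, using $\sum_{p \geq 2}\k_p = H$, gives
\begin{align*}
(\nabla_1 H)^2 \;\leq\; H \sum_{p \geq 2}\frac{(\nabla_p h_{1p})^2}{\k_p},
\end{align*}
so the supremum term dominates $\frac{2}{H^3}(\nabla_1 H)^2 = -N_{11}$. This verifies (\ref{Andrews-evolution-2}) and Theorem \ref{theo-Andrews} preserves $h_i^{j} \geq 0$ along the flow.

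To upgrade from $h_i^{j} \geq 0$ to strict positivity for $t > 0$, one argues as in the final paragraph of Lemma \ref{lem-3.9}: if $\k_1$ vanishes at an interior point $(x_0, t_0)$, the strong maximum principle applied to the tensor $h_i^{j}$ produces a parallel null direction on $\Sigma_{t_0}$, forcing $\k_1 \equiv 0$ on $\Sigma_{t_0}$; this contradicts the elementary fact that any closed hypersurface in $\H^n$ admits a point where every principal curvature exceeds $1$ (at tangency with a smallest enclosing geodesic sphere). The main obstacle throughout is precisely the cancellation of the indefinite gradient term $-2H^{-3}|\nabla_1 H|^2$, which explains why a naive scalar maximum principle on $\k_1$ alone does not close and why the tensor formulation together with the Codazzi/Cauchy--Schwarz bookkeeping above is essential.
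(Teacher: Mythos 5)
Your argument is correct, and it is a genuine alternative to what the paper does: the paper offers \emph{no proof} of Lemma \ref{lem-6.1}, instead pointing to \cite{Hu-Li2018} and to the pinching estimate of \cite[Lemma 3.2]{Wei2018}, whereas you supply a self-contained argument by running the tensor maximum principle (Theorem \ref{theo-Andrews}) directly on the Weingarten map, in the same spirit as the paper's own Lemma \ref{lem-3.9} but adapted to the IMCF evolution (\ref{6.2}). The key technical point is exactly the one you isolate: the only indefinite contribution of the reaction term at a null direction $v=e_1$ with $\kappa_1=0$ is the gradient term $-2H^{-3}(\nabla_1 H)^2$, since $(|A|^2+n-1)h_1^1$ and $(h^2)_1^1$ vanish there. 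Your control of this term is sound: Codazzi plus $\nabla_k h_{11}=0$ at the spatial minimum gives $\nabla_1 H=\sum_{p\ge2}\nabla_p h_{1p}$, and the weighted Cauchy--Schwarz
\begin{align*}
(\nabla_1 H)^2=\Bigl(\sum_{p\ge2}\nabla_p h_{1p}\Bigr)^2\le\Bigl(\sum_{p\ge2}\kappa_p\Bigr)\sum_{p\ge2}\frac{(\nabla_p h_{1p})^2}{\kappa_p}=H\sum_{p\ge2}\frac{(\nabla_p h_{1p})^2}{\kappa_p}
\end{align*}
(using $\sum_{p\ge2}\kappa_p=H$ when $\kappa_1=0$) shows that $\frac{2}{H^3}(\nabla_1 H)^2$ is dominated by the value of the supremum term at the optimal $\Lambda$, so (\ref{Andrews-evolution-2}) holds. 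Your upgrade from nonnegativity to strict positivity for $t>0$ via the strong maximum principle and the enclosing-geodesic-sphere observation mirrors exactly the final paragraph of the proof of Lemma \ref{lem-3.9} and is correct. The trade-off between the two routes: the cited pinching estimate is a stronger quantitative statement from which strict convexity follows as a byproduct, while your argument uses only the machinery already developed in Section 3 of the paper and isolates precisely the role of the Codazzi identity in taming the $\ddot\Psi$-type gradient term for the concave speed $F=-1/H$.
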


By using the result of Gerhardt \cite{Gerhardt2011} and Lemma \ref{lem-6.1}, we have the following proposition.
\begin{pro}\label{proposition-6.2}
If the initial hypersurface $\Sigma$ is strictly convex, then the solution of the IMCF (\ref{6.1}) exists for all $t>0$ and preserves the strict convexity. Moreover, the hypersurfaces $\Sigma_t$ become more and more umbilical in the following sense:
\begin{align}\label{6.5}
\left| h_i^j-\d_i^j \right| \leq C e^{-\frac{t}{n-1}}, \quad t>0,
\end{align}
i.e., the principal curvatures are uniformly bounded and converge exponentially fast to $1$.
\end{pro}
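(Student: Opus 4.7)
The plan is to deduce Proposition~\ref{proposition-6.2} by combining Gerhardt's long-time existence and asymptotic result for IMCF in hyperbolic space from \cite{Gerhardt2011} with the convexity-preservation statement Lemma~\ref{lem-6.1}, and then reading off the quantitative umbilicality estimate from the evolution equation (\ref{6.2}) near the spherical equilibrium.

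First I would verify that a strictly convex closed hypersurface satisfies the hypotheses of Gerhardt's theorem. By the Hadamard theorem recalled in Section~2.3, such a $\Sigma$ is embedded and bounds a convex body, hence is star-shaped with respect to any interior point; strict convexity also gives $H=(n-1)p_1>0$. Gerhardt's theorem then produces a unique smooth solution $\{\Sigma_t\}_{t\in[0,\infty)}$ of (\ref{6.1}), and when $\Sigma_t$ is written as a radial graph $\rho(\theta,t)$ over $\S^{n-1}$ the flow satisfies $\rho(\theta,t)=t/(n-1)+f(\theta)+o(1)$ together with matching bounds on all covariant derivatives. This provides simultaneously the long-time existence claim and uniform bounds on the principal curvatures with $\kappa_i\to 1$. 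Preservation of strict convexity on $(0,\infty)$ is exactly Lemma~\ref{lem-6.1}, which is proved by the pinching estimate of \cite[Lemma 3.2]{Wei2018} applied to the Weingarten evolution (\ref{6.2}).

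For the quantitative estimate $|h_i^j-\delta_i^j|\le Ce^{-t/(n-1)}$, set $S_i^j:=h_i^j-\delta_i^j$. A direct linearization of (\ref{6.2}) around the equilibrium $h_i^j=\delta_i^j$ — where $H=n-1$, $|A|^2=n-1$, $(h^2)_i^j=\delta_i^j$ and the reaction terms cancel — yields
\begin{align*}
\frac{\partial}{\partial t}S_i^j=\frac{1}{H^2}\Delta S_i^j-\frac{2}{H^3}\nabla_i H\,\nabla^j H-\frac{2}{n-1}S_i^j+\mathcal{R}_i^j,
\end{align*}
where $\mathcal{R}_i^j$ is smooth in $S$ and vanishes quadratically as $S\to 0$. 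Gerhardt's smooth rescaled convergence forces $|S|$ to be arbitrarily small for $t\ge T_0$, so $\mathcal{R}$ is strictly dominated by the linear drift $-\tfrac{2}{n-1}S_i^j$; applying the tensor maximum principle of Theorem~\ref{theo-Andrews} to the largest eigenvalue of $S_i^j$ and then to $-S_i^j$ yields $\sup_{\Sigma_t}|S_i^j|\le Ce^{-t/(n-1)}$ on $[T_0,\infty)$, and on the compact interval $[0,T_0]$ the estimate is automatic by smoothness. Alternatively, a barrier argument against concentric geodesic sphere solutions, for which $\coth\rho(t)-1=O(e^{-2t/(n-1)})$, delivers the same rate more directly.

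The main obstacle is the third step: making the linear contraction rate survive the nonlinear remainder uniformly in $t$. This is resolved by Gerhardt's smooth rescaled convergence, which makes the nonlinear terms negligible for large $t$, after which the stated rate $e^{-t/(n-1)}$ follows; in fact the genuine rate is $e^{-2t/(n-1)}$, but the weaker form stated in the proposition is all that is required for the application to Theorem~\ref{main-theo-3}.
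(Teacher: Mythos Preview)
Your proposal is correct and follows the same route as the paper: the paper gives no separate proof but simply states that the proposition follows from Gerhardt's result \cite{Gerhardt2011} together with Lemma~\ref{lem-6.1}, and this is exactly your first two steps. Your additional linearization argument for the decay rate is sound and more explicit than what the paper records (the paper just absorbs this into the citation of Gerhardt), though note that Theorem~\ref{theo-Andrews} as stated is a nonnegativity-preservation principle rather than a decay estimate; a scalar maximum principle applied to the extreme eigenvalues of $S_i^j$ is the tool you actually want there.
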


Now we give the proof of Theorem \ref{main-theo-3}.
\begin{proof}[Proof of Theorem \ref{main-theo-3}]
By the variation formula (\ref{6.3}) along the IMCF (\ref{6.1}) and the Newton-MacLaurin inequality (\ref{2.1}), we have
\begin{align}\label{6.6}
\frac{d}{dt}\(\frac{|\Sigma_t|}{\omega_{n-1}}\)=&\frac{|\Sigma_t|}{\omega_{n-1}},
\end{align}
\begin{align}\label{6.7}
\frac{d}{dt}\(\frac{\int_{\Sigma}p_{n-1}}{\omega_{n-1}}\)=&\frac{1}{\omega_{n-1}}\int_\Sigma \frac{p_{n-2}}{p_{1}}\leq \frac{\int_{\Sigma}p_{n-3}}{\omega_{n-1}}.
\end{align}
By Lemma \ref{lem-6.1}, the evolving hypersurface $\Sigma_t$ is strictly convex. By (\ref{1.7}) in Theorem \ref{main-theo-1}, we have
\begin{align*}
\frac{\int_{\Sigma}p_{n-3}}{\omega_{n-1}} \leq \frac{\int_{\Sigma}p_{n-1}}{\omega_{n-1}}\left[1-\(\frac{\int_\Sigma p_{n-1}}{\omega_{n-1}}\)^{-\frac{2}{n-1}} \right].
\end{align*}
For simplicity, we take
$$
x(t)=\frac{\int_{\Sigma}p_{n-1}}{\omega_{n-1}}, \quad y(t)=\frac{|\Sigma_t|}{\omega_{n-1}}.
$$
It follows from (\ref{6.6}) and (\ref{6.7}) that
\begin{align*}
\frac{d}{dt}x \leq x \(1-x^{-\frac{2}{n-1}}\), \quad \frac{d}{dt}y=y.
\end{align*}
A direct calculation gives
\begin{align*}
 &\frac{d}{dt} \left[ y-x (1-x^{-\frac{2}{n-1}})^\frac{n-1}{2}\right]\\
=& y-\left[(1-x^{-\frac{2}{n-1}})^\frac{n-1}{2}+x^{-\frac{2}{n-1}}(1-x^{-\frac{2}{n-1}})^\frac{n-3}{2}\right]\frac{d}{dt}x\\
=&y-\left[(1-x^{-\frac{2}{n-1}})^\frac{n-3}{2}\right]\frac{d}{dt}x \\
\geq &y-x (1-x^{-\frac{2}{n-1}})^\frac{n-1}{2}.
\end{align*}
Along the IMCF (\ref{6.1}), we have $y(t)>0$ for all $t\in [0,\infty)$. For any bounded convex domain $\Omega_t$ in $\H^n$, $W_n(\Omega_t)=\frac{\omega_{n-1}}{n}$. Together with (\ref{2.6}), we get
\begin{align*}
\(\frac{\int_{\Sigma}p_{n-1}}{\omega_{n-1}}\)=C_n^2\(\frac{W_{n-2}(\Omega)}{\omega_{n-1}}\)+1\geq 1.
\end{align*}
We consider the following functional
\begin{align*}
Q(t):=\(\frac{|\Sigma_t|}{\omega_{n-1}}\)^{-1}\left[\frac{|\Sigma_t|}{\omega_{n-1}}-\(\frac{\int_{\Sigma_t}p_{n-1}}{\omega_{n-1}}\)\left[1-\(\frac{\int_{\Sigma_t}p_{n-1}}{\omega_{n-1}}\)^{-\frac{2}{n-1}}\right]^\frac{n-1}{2}\right].
\end{align*}
Then we have $\frac{d}{dt}Q(t)\geq 0$. Now we analyze the asymptotics of $Q(t)$ as $t\ra \infty$. By (\ref{6.6}) we have $|\Sigma_t|=|\Sigma|e^t$. By (\ref{6.5}) in Proposition \ref{proposition-6.2}, we get
$$
h_i^j=\(1+O(e^{-\frac{t}{n-1}})\)\d_i^j, \quad \text{on $\Sigma_t$}.
$$
As $p_{n-1}$ is homogeneous of degree $n-1$, we get
\begin{align*}
p_{n-1}(h_i^j)=(1+O(e^{-\frac{t}{n-1}}))^{n-1}=1+O(e^{-\frac{t}{n-1}}), \quad \text{on $\Sigma_t$}.
\end{align*}
Thus, we have
\begin{align*}
\frac{\int_{\Sigma_t}p_{n-1}}{\omega_{n-1}}=\frac{|\Sigma_t|}{\omega_{n-1}}\(1+O(e^{-\frac{t}{n-1}})\)=O(e^{t}),  \quad \text{on $\Sigma_t$}.
\end{align*}
It follows that
\begin{align*}
Q(t)=&1-\(\frac{\int_{\Sigma_t}p_{n-1}}{|\Sigma_t|}\)\left[1-\(\frac{\int_{\Sigma_t}p_{n-1}}{\omega_{n-1}}\)^{-\frac{2}{n-1}}\right]^\frac{n-1}{2}\\
=&1-\(1+O(e^{-\frac{t}{n-1}})\)\(1+O(e^{-\frac{2t}{n-1}})\)^\frac{n-1}{2}\\
=&1-\(1+O(e^{-\frac{t}{n-1}})\)\(1+O(e^{-\frac{2t}{n-1}})\)\\
=&O(e^{-\frac{t}{n-1}}),
\end{align*}
which gives $\lim_{t\ra \infty}Q(t)=0$. Together with the monotonicity of $Q(t)$, we obtain
$$
Q(0) \leq Q(t) \leq \lim_{t\ra \infty}Q(t)=0,
$$
which is equivalent to (\ref{1.11}). If the equality holds in (\ref{1.11}), then the equality in the Newton-MacLaurin inequality implies that $\Sigma$ is totally umbilical and hence it is a geodesic sphere in $\H^n$.
\end{proof}

\section{Alexandrov-Fenchel inequalities in Euclidean space via curvature contraction flow}
In this section, we use curvature contraction flows to prove Alexandrov-Fenchel inequalities for convex hypersurfaces in Euclidean space. Let $\Sigma^{n-1}$ be a strictly convex hypersurface in $\R^n$. Let $X_0:M^{n-1}\ra \R^n$ is the embedding of the hypersurface $\Sigma$ in $\R^n$, we consider a family of smooth immersions $X:M^{n-1}\times [0,T^\ast)\ra \R^n$ satisfying
\begin{equation}\label{7.1}
\begin{split}
\left\{\begin{aligned}
\frac{\partial}{\partial t} X(x,t)=&-\frac{p_{n-k}}{p_{n-k-1}}\nu(x,t), \quad 1\leq k\leq n-1,\\
X(0,\cdot)=&X_0(\cdot).
\end{aligned}\right.
\end{split}
\end{equation}
The smooth convergence of these curvature contraction flows (\ref{7.1}) in Euclidean space have been proved by the first author \cite{Andrews1994-1}.

\begin{pro}\label{pro-convergence}
If $\Sigma$ is a strictly convex hypersurface in $\R^n$, then there exists a unique smooth solution to the flow (\ref{7.1}) on a maximal time interval $[0,T^\ast)$, and the hypersurfaces $\Sigma_t$ converge uniformly to a round point $p_0\in \R^n$ as $t\ra T^\ast$, in the sense that the rescaled flow converges smoothly to a round sphere. Moreover, the flow hypersurface $\Sigma_t$ is strictly convex for each $t\in[0,T^\ast)$.
\end{pro}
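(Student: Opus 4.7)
The plan is to follow the standard Andrews framework for contraction flows driven by a speed $F=p_{n-k}/p_{n-k-1}$ that is homogeneous of degree one, strictly monotone increasing in each principal curvature on the positive cone $\Gamma_{n-1}^+$, and concave as a function of the Weingarten map (the latter following from the Newton--MacLaurin concavity of $p_j^{1/j}$ and quotient arguments). First I would establish short-time existence and uniqueness: because $F$ is smooth and strictly monotone on the open set of positive Weingarten tensors, the flow (\ref{7.1}) is a fully nonlinear strictly parabolic PDE for the support function (or, equivalently, for a graphical parametrization over the inward unit normal), so classical parabolic theory provides a unique smooth solution on some maximal interval $[0,T^\ast)$.

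Next I would show that strict convexity is preserved. Using the standard evolution formula for the Weingarten tensor under a contraction flow with speed $F$ (as recalled in Lemma~\ref{lem-3.8}, but with $\varepsilon=0$ instead of $\varepsilon=-1$, so that the undifferentiated zeroth-order terms become simply $F^{kl}(h^2)_{kl}\,h_i^j - 2F\delta_i^j$), I would apply Andrews' tensor maximum principle (Theorem~\ref{theo-Andrews}) to $S_{ij}:=h_i^j$. The critical inequality at a null eigenvector reduces, after choosing the optimal $\Lambda$, to a quadratic form in $\nabla_1 h_{kl}$ which is nonnegative precisely because $F$ is concave in the Weingarten tensor (this is the standard Andrews computation, and closely parallels the h-convexity argument in Lemma~\ref{lem-3.9}). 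This preserves $\kappa_i\geq 0$; upper bounds on $F$ in terms of the inradius together with avoidance by shrinking spheres then give strict positivity of the principal curvatures on $(0,T^\ast)$.

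Then I would establish the pinching estimate $\kappa_{n-1}\le C\kappa_1$ along the flow, which is the heart of the argument. One applies the maximum principle to the quotient $\kappa_{n-1}/\kappa_1$, or more robustly to the tensor $S_{ij}=Ch_i^j-\kappa_1^{-1}h_i^kh_k^j/F^{\text{something}}$, but the cleanest route (carried out in Andrews~\cite{Andrews1994-1}) is to show that a pinching function of the form $G=|\mathring{h}|^2/H^2$ (or a suitable replacement adapted to the quotient speed) satisfies a differential inequality from which an a priori bound follows using again the concavity of $F$ and Kato-type inequalities on the gradient terms. This is the step I expect to be the main obstacle: controlling the reaction terms involves delicate use of the concavity of $p_{n-k}/p_{n-k-1}$ and the Codazzi identity, and requires keeping track of both first and second derivatives of $F$.

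Finally I would combine the pinching with comparison by enclosing and enclosed spheres (whose radii evolve by the ODE $\dot r = -1/r$ up to a constant, giving finite extinction time $T^\ast<\infty$) to conclude that the inner and outer radii are comparable and both tend to a single point $p_0$. A standard rescaling $\tilde X_t = (T^\ast-t)^{-1/2}(X_t-p_0)$ converts (\ref{7.1}) into a flow on a fixed time interval whose limiting equation has geodesic spheres as strict attractors; pinching plus Krylov--Safonov and Schauder estimates yield uniform $C^\infty$ bounds, and compactness plus the classification of self-similar solutions (round spheres) gives smooth convergence of the rescaling to the unit sphere, completing the proof.
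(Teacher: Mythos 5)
The paper does not prove this proposition: it is quoted from Andrews~\cite{Andrews1994-1}, and Section~7 simply cites that reference rather than reproving it. Your sketch reconstructs the general contraction-flow strategy of \cite{Andrews1994-1}, and the architecture you describe (short-time existence, preservation of strict convexity via the tensor maximum principle, a pinching estimate, speed bounds giving finite extinction, rescaling and smooth convergence) is the correct outline. A few details are off, however. Obtaining strict positivity of the principal curvatures on $(0,T^\ast)$ from ``avoidance by shrinking spheres'' does not work: sphere barriers control position, not curvature. Strict convexity instead comes from the strong maximum principle applied to the Weingarten tensor, exactly as in the paper's own Lemma~\ref{lem-3.9} in the hyperbolic setting (a zero principal curvature at an interior time would force a parallel null eigenvector of $h_i^j$ on all of $\Sigma_t$, impossible on a closed hypersurface). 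The pinching estimate in \cite{Andrews1994-1} is also not phrased in terms of the Huisken-type scalar quantity built from the traceless second fundamental form; that is tailored to mean curvature flow and does not transfer cleanly to the quotient speeds. Andrews instead shows that a tensor pinching condition of the form $h_{ij}\ge \varepsilon\,\kappa_{n-1}\,g_{ij}$ is preserved, and the computation exploits not only the concavity of $F=p_{n-k}/p_{n-k-1}$ but also its inverse-concavity, which is what makes the preservation-of-convexity calculation close at a null eigenvector. These are corrections of technique rather than gaps in the overall approach; rewritten along these lines your outline would be a faithful reproduction of \cite{Andrews1994-1}.
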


The variation formula of $\int_{\Sigma}p_m$ along (\ref{7.1}) is
\begin{align}\label{7.2}
\frac{d}{dt}\int_{\Sigma} p_{m}=-(n-1-m)\int_{\Sigma}\frac{p_{n-k}p_{m+1}}{p_{n-k-1}}, \quad m=0,1,\cdots,n-1.
\end{align}
For a convex domain $\Omega$ in $\R^n$ with smooth boundary $\partial\Omega=\Sigma$, the quermassintegrals and the curvature integrals are related by
\begin{align}\label{7.3}
\int_{\Sigma}p_j =n W_{j+1}(\Omega), \quad j=0,1,\cdots,n-1.
\end{align}
It follows from the Gauss-Bonnet-Chern theorem that the total curvature is a constant, i.e.,
\begin{align}\label{7.4}
\int_{\Sigma}p_{n-1}=\omega_{n-1}.
\end{align}
A similar argument as in Lemma \ref{lem-3.6} gives the following result.
\begin{lem}\label{lem-7.2}
Let $\Sigma$ be a strictly convex hypersurface in $\R^n$. Let $\Sigma_t$, $t\in [0,T^\ast)$ be the solution of the flow (\ref{7.1}) with the initial hypersurface $\Sigma$, and $\Omega_t$ be the domain enclosed by $\Sigma_t$.  Then we have
\begin{equation}\label{7.5}
\begin{split}
\lim_{t\ra T^\ast}\int_{\Sigma_t}p_j=\left\{\begin{aligned}
&0, \quad    &0\leq j\leq n-2;\\
&\omega_{n-1}, \quad &j=n-1,
\end{aligned}\right.
\end{split}
\end{equation}
\end{lem}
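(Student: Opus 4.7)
The proof follows exactly the template of Lemma \ref{lem-3.6}, with the Euclidean setting actually simplifying matters since no projection to the Beltrami-Klein model is required. The plan is to exploit Proposition \ref{pro-convergence}, which asserts that $\Sigma_t$ converges uniformly to a round point $p_0$, together with the set-inclusion monotonicity of quermassintegrals on convex bodies and an explicit computation on Euclidean balls.

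First, I would reduce the case $j=n-1$ immediately: by the Gauss-Bonnet-Chern theorem \eqref{7.4}, $\int_{\Sigma_t}p_{n-1}\equiv \omega_{n-1}$ for all $t$, since each $\Sigma_t$ is a closed strictly convex hypersurface in $\R^n$ by Proposition \ref{pro-convergence}. This is constant and hence trivially passes to the limit. So the content of the lemma is in the range $0\leq j\leq n-2$.

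Next, I would control the inner and outer radii of $\Omega_t$. By Proposition \ref{pro-convergence} the hypersurfaces $\Sigma_t$ converge uniformly to the point $p_0$, so $\mathrm{diam}(\Omega_t)\to 0$ as $t\to T^\ast$, and therefore
\[
0\;\leq\;\rho_{-}(t)\;\leq\;\rho_{+}(t)\;\leq\;\mathrm{diam}(\Omega_t)\;\longrightarrow\;0.
\]
Thus there exist points $p_{t},q_{t}\in \R^n$ with $B_{\rho_{-}(t)}(p_{t})\subset \Omega_t\subset B_{\rho_{+}(t)}(q_{t})$, and both radii vanish in the limit. (No appeal to the pinching estimate is needed here; the mere uniform shrinking to a point is enough.)

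Finally, I would combine the monotonicity \eqref{2.4} of quermassintegrals under set inclusion with \eqref{7.3} and the explicit values on Euclidean balls. For a Euclidean ball of radius $\rho$ the principal curvatures are all $\rho^{-1}$, hence $p_j=\rho^{-j}$ on $\partial B_\rho$ and
\[
\int_{\partial B_{\rho}}p_j=\omega_{n-1}\rho^{\,n-1-j}, \qquad W_{j+1}(B_{\rho})=\tfrac{1}{n}\omega_{n-1}\rho^{\,n-1-j}.
\]
The sandwich $W_{j+1}(B_{\rho_{-}(t)})\leq W_{j+1}(\Omega_t)\leq W_{j+1}(B_{\rho_{+}(t)})$ together with \eqref{7.3} gives
\[
\omega_{n-1}\rho_{-}(t)^{n-1-j}\;\leq\;\int_{\Sigma_t}p_j\;\leq\;\omega_{n-1}\rho_{+}(t)^{n-1-j}.
\]
Since $n-1-j\geq 1$ for $0\leq j\leq n-2$ and $\rho_{\pm}(t)\to 0$, the squeeze theorem yields $\lim_{t\to T^\ast}\int_{\Sigma_t}p_j=0$, completing the proof. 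The only subtle point, which is entirely taken care of by Proposition \ref{pro-convergence}, is to know \emph{a priori} that $\Omega_t$ remains convex throughout the evolution so that \eqref{2.4} applies; there is no further obstacle, in contrast to the hyperbolic setting where controlling the ratio $\rho_{+}/\rho_{-}$ required the Beltrami-Klein projection argument of Lemma \ref{lem-3.5}.
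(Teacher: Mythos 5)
Your proof is correct and follows the same template as the paper's Lemma~\ref{lem-3.6}, to which the paper defers: sandwich $\Omega_t$ between balls, use the set-inclusion monotonicity~\eqref{2.4} of quermassintegrals together with~\eqref{7.3}, and evaluate on Euclidean balls, with the $j=n-1$ case being exactly the Gauss--Bonnet--Chern identity~\eqref{7.4}. Your observation that $\rho_{+}(t)\to 0$ follows directly from $\mathrm{diam}(\Omega_t)\to 0$ (a consequence of Proposition~\ref{pro-convergence}) is a legitimate shortcut that sidesteps the inner--outer radius comparability machinery the paper needed for the hyperbolic analogue in Lemma~\ref{lem-3.5}.
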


The following Alexandrov-Fenchel inequality has been proved for $k$-convex and starshaped hypersurfaces in $\R^n$ by Guan and Li \cite{Guan-Li2009}. We show that this inequality can also be proved for strictly convex hypersurfaces via curvature contraction flows in $\R^n$.
\begin{theo}\label{theo-7.1}
For any strictly convex hypersurface $\Sigma$ in $\R^n$, we have the following inequality
\begin{align}\label{7.6}
\(\frac{\int_{\Sigma}p_{n-1-k}}{\omega_{n-1}}\)^\frac{1}{k}\geq \(\frac{\int_{\Sigma}p_{n-1-(k+1)}}{\omega_{n-1}}\)^\frac{1}{k+1},\quad 1\leq k\leq n-1.
\end{align}
The equality holds if and only $\Sigma$ is a geodesic sphere.
\end{theo}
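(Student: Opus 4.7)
I would adapt the inductive strategy used in the proof of Theorem \ref{theo-4.1} to the Euclidean setting. The induction is on $k$, with the base case being the Gauss--Bonnet--Chern identity $\int_\Sigma p_{n-1}=\omega_{n-1}$ from (\ref{7.4}), which formally corresponds to the ``$k=0$'' instance of (\ref{7.6}). The inductive step $(k-1)\Rightarrow (k)$ uses the curvature contraction flow (\ref{7.1}) with parameter $k$, whose speed is $F=p_{n-k}/p_{n-k-1}$.

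Fix $k\geq 1$ and assume, as inductive hypothesis, that
\begin{align*}
\frac{1}{\omega_{n-1}}\int_\Sigma p_{n-k}\geq \left(\frac{1}{\omega_{n-1}}\int_\Sigma p_{n-1-k}\right)^{(k-1)/k};
\end{align*}
for $k=1$ this reduces to Gauss--Bonnet--Chern (with equality). Run the flow (\ref{7.1}) from $\Sigma$, and set
\begin{align*}
a(t)=\frac{1}{\omega_{n-1}}\int_{\Sigma_t}p_{n-1-k},\quad b(t)=\frac{1}{\omega_{n-1}}\int_{\Sigma_t}p_{n-1-(k+1)},\quad c(t)=\frac{1}{\omega_{n-1}}\int_{\Sigma_t}p_{n-k}.
\end{align*}
Consider the functional $\Phi_k(t):=a(t)^{(k+1)/k}-b(t)$. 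By (\ref{7.2}) one has $\dot a=-k\omega_{n-1}^{-1}\int_{\Sigma_t}p_{n-k}^2/p_{n-k-1}$ and $\dot b=-(k+1)c$, so
\begin{align*}
\frac{d}{dt}\Phi_k=(k+1)\left[c-\frac{a^{1/k}}{\omega_{n-1}}\int_{\Sigma_t}\frac{p_{n-k}^2}{p_{n-k-1}}\right].
\end{align*}
Cauchy--Schwarz applied to $\sqrt{p_{n-k-1}}$ and $p_{n-k}/\sqrt{p_{n-k-1}}$ gives $\omega_{n-1}^{-1}\int p_{n-k}^2/p_{n-k-1}\geq c^2/a$, and together with the inductive hypothesis $c\geq a^{(k-1)/k}$ I obtain
\begin{align*}
\frac{a^{1/k}}{\omega_{n-1}}\int_{\Sigma_t}\frac{p_{n-k}^2}{p_{n-k-1}}\geq \frac{a^{1/k}c^2}{a}=\frac{c^2}{a^{(k-1)/k}}\geq c,
\end{align*}
so $\Phi_k$ is non-increasing along the flow.

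By Proposition \ref{pro-convergence}, the rescaled hypersurface converges smoothly to a round sphere; denoting the asymptotic radius by $r(t)\to 0$, one has $a(t)=r(t)^k(1+o(1))$ and $b(t)=r(t)^{k+1}(1+o(1))$, hence $\Phi_k(t)=r(t)^{k+1}\cdot o(1)\to 0$. Combining monotonicity with this limit gives $\Phi_k(0)\geq 0$, which rearranges to (\ref{7.6}) at level $k$ and closes the induction. In the equality case, $\Phi_k(0)=0$ forces $\Phi_k\equiv 0$ along the flow, hence equality in Cauchy--Schwarz (so $p_{n-k}/p_{n-k-1}$ is pointwise constant on each $\Sigma_t$) and in the inductive hypothesis at every time; together with Newton--MacLaurin and the inductive rigidity this forces $\Sigma$ to be a round sphere.

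The main obstacle is the delicate interplay between the Cauchy--Schwarz estimate and the inductive hypothesis: the proof hinges on the fact that these two a priori independent, each individually degenerate, bounds compose into exactly the inequality $a^{1/k}\omega_{n-1}^{-1}\int p_{n-k}^2/p_{n-k-1}\geq c$ needed for monotonicity of $\Phi_k$. A secondary technical point is verifying $\Phi_k(t)\to 0$: this requires the smooth convergence in Proposition \ref{pro-convergence} to see that the leading-order cancellation in $a^{(k+1)/k}-b$ occurs at an order strictly smaller than $r(t)^{k+1}$.
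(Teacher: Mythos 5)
Your proposal matches the paper's proof essentially verbatim: the functional $\Phi_k$ is the paper's $Q_k$ divided by $\omega_{n-1}^{(k+1)/k}$, the derivative computation, the Cauchy--Schwarz (H\"older) estimate, the use of the inductive hypothesis $c\geq a^{(k-1)/k}$, and the vanishing limit $\Phi_k(t)\to 0$ via Lemma \ref{lem-7.2} all coincide, and your degenerate ``$k=0$'' base case is the paper's separate treatment of $k=1$ in disguise. One small point: in the equality case the paper passes from the pointwise constancy of the ratio $p_{n-k}/p_{n-1-k}$ to $\Sigma$ being a round sphere by invoking Korevaar's rigidity theorem \cite{Korevaar1988}, a step which your appeal to ``Newton--MacLaurin and the inductive rigidity'' glosses over.
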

\begin{proof}
We first prove (\ref{7.6}) for $k=1$. By (\ref{7.2}), we have
\begin{align*}
\frac{d}{dt}\int_{\Sigma} p_{n-3}=-2\int_{\Sigma} p_{n-1}=-2\omega_{n-1},
\end{align*}
and
\begin{align}\label{7.7}
\frac{d}{dt}\int_{\Sigma} p_{n-2}=-\int_{\Sigma}\frac{p_{n-1}^2}{p_{n-2}}.
\end{align}
By the H\"older inequality, we have
\begin{align}\label{7.8}
\(\int_{\Sigma}p_{n-1}\)^2 =\(\int_{\Sigma}\frac{p_{n-1}}{\sqrt{p_{n-2}}}\sqrt{p_{n-2}}\)^2 \leq \int_{\Sigma}\frac{p_{n-1}^2}{p_{n-2}} \cdot \int_{\Sigma}p_{n-2}.
\end{align}
Together with (\ref{7.7}), we get
\begin{align*}
\frac{d}{dt}\(\int_{\Sigma}p_{n-2}\)^2\leq -2\(\int_{\Sigma}p_{n-1}\)^2=-2\omega_{n-1}^2.
\end{align*}
We define the functional
$$
Q_1(t):=\(\int_{\Sigma}p_{n-2}\)^2-\omega_{n-1}\int_{\Sigma}p_{n-3},
$$
then $\frac{d}{dt}Q_1(t)\leq 0$. Together with Lemma \ref{7.2}, we get
\begin{align*}
\(\int_{\Sigma}p_{n-2}\)^2-\omega_{n-1}\int_{\Sigma}p_{n-3}=Q_1(0)\geq \lim_{t\ra T^\ast} Q_1(t)=0,
\end{align*}
which verifies (\ref{7.6}) for $k=1$.

Now we prove (\ref{7.6}) for $k\geq 2$ by induction. We claim that if (\ref{7.6}) holds for $k-1$, i.e.,
\begin{align}\label{7.9}
\(\frac{\int_{\Sigma}p_{n-1-(k-1)}}{\omega_{n-1}}\)^\frac{1}{k-1} \geq \(\frac{\int_{\Sigma}p_{n-1-k}}{\omega_{n-1}}\)^\frac{1}{k}.
\end{align}
then (\ref{7.6}) also holds for $k$, i.e.,
\begin{align}\label{7.10}
\(\frac{\int_{\Sigma}p_{n-1-k}}{\omega_{n-1}}\)^\frac{1}{k} \geq \(\frac{\int_{\Sigma}p_{n-1-(k+1)}}{\omega_{n-1}}\)^\frac{1}{k+1}.
\end{align}
By (\ref{7.2}), we have
\begin{align*}
\frac{d}{dt}\int_{\Sigma}p_{n-1-(k+1)}=-(k+1)\int_{\Sigma}p_{n-k}.
\end{align*}
\begin{align}\label{7.11}
\frac{d}{dt}\int_{\Sigma}p_{n-1-k}=-k\int_{\Sigma}\frac{p_{n-k}^2}{p_{n-1-k}}\leq -k\frac{\(\int_{\Sigma}p_{n-k}\)^2}{\int_{\Sigma}p_{n-1-k}},
\end{align}
where we have used the H\"older inequality in the last inequality. We define the functional
\begin{align*}
Q_k(t):=\(\int_{\Sigma} p_{n-1-k}\)^\frac{k+1}{k}-\omega_{n-1}^{\frac{1}{k}}\int_{\Sigma}p_{n-1-(k+1)}.
\end{align*}
Then we get
\begin{align*}
\frac{d}{dt}Q_k(t)=&-(k+1)\(\int_{\Sigma}p_{n-1-k}\)^\frac{1}{k}\int_{\Sigma}\frac{p_{n-k}^2}{p_{n-1-k}}+(k+1)
\omega_{n-1}^\frac{1}{k}\int_{\Sigma}p_{n-k}\\
\leq &-(k+1)\frac{\(\int_{\Sigma}p_{n-k}\)^2}{\(\int_{\Sigma}p_{n-1-k}\)^{\frac{k-1}{k}}}+(k+1)\omega_{n-1}^\frac{1}{k}\int_{\Sigma}p_{n-k}\\
=&(k+1)\frac{\int_{\Sigma}p_{n-k}}{\(\int_{\Sigma}p_{n-1-k}\)^{\frac{k-1}{k}}}\left[-\int_{\Sigma}p_{n-k}+\omega_{n-1}^{\frac{1}{k}}\(\int_{\Sigma}p_{n-1-k}\)^{\frac{k-1}{k}} \right].
\end{align*}
It is easy to observe that (\ref{7.9}) is equivalent to
\begin{align*}
-\int_{\Sigma}p_{n-k}+\omega_{n-1}^{\frac{1}{k}}\(\int_{\Sigma}p_{n-1-k}\)^{\frac{k-1}{k}}\leq 0,
\end{align*}
which implies that $\frac{d}{dt}Q_k(t)\leq 0$. Together with Lemma \ref{7.2}, we get
\begin{align*}
\(\int_{\Sigma} p_{n-1-k}\)^\frac{k+1}{k}-\omega_{n-1}^{\frac{1}{k}}\int_{\Sigma}p_{n-1-(k+1)}=Q_k(0) \geq \lim_{t\ra T^\ast}Q_k(t)=0,
\end{align*}
which is equivalent to (\ref{7.10}). To characterize the equality case of (\ref{7.6}), the equality holds in (\ref{7.8}) and (\ref{7.11}). Together with the strict convexity of $\Sigma$, we have the curvature quotient is constant, i.e., $\frac{p_{n-k}}{p_{n-1-k}}\equiv C$ for some constant $C>0$ on $\Sigma$. By a rigidity theorem of Korevaar \cite{Korevaar1988}, we conclude that $\Sigma$ is a geodesic sphere in $\R^n$. This completes the proof of Theorem \ref{theo-7.1}.
\end{proof}

\section{A Heintze-Karcher type inequality for hypersurface with positive Ricci curvature in hyperbolic space}
For a compact domain $\Omega$ with smooth boundary $\partial \Omega=\g$ in hyperbolic plane $\H^2$, the Gauss-Bonnet theorem implies
\begin{align*}
\int_{\g}\k ds = 2\pi+V,
\end{align*}
where $\k$ is the curvature of $\g$ and $V$ is the volume of $\Omega$. Let $L$ be the length of $\g$, then the isoperimetric inequality gives
\begin{align*}
L^2 \geq V(4\pi+V).
\end{align*}
Therefore, for a strictly convex curve $\g$ in $\H^2$, we have the following inequality:
\begin{align}\label{8.1}
\int_{\g} \frac{1}{\k}ds \geq \frac{L^2}{\int_{\g} \k ds} \geq \frac{V(4\pi+V)}{2\pi+V}.
\end{align}

In this section, we will use the mean curvature flow to prove the following Heintze-Karcher type inequality for hypersurfaces with positive Ricci curvature in hyperbolic space $\H^n$.
\begin{theo}\label{main-theo-4}
Let $\Sigma$ be a closed hypersurface with positive Ricci curvature in $\H^n$, and $\Omega$ the domain enclosed by $\Sigma$. Then
\begin{align}\label{8.2}
\int_{\Sigma} \frac{1}{p_1} \geq \frac{f^{-1}(\mrm{Vol}(\Omega))}{\sqrt{1+\(\frac{f^{-1}(\Vol(\Omega))}{\omega_{n-1}}\)^{-\frac{2}{n-1}}}}.
\end{align}
Here $f=f(x)$ is a strictly increasing function defined by
$$
f(x)=\int_{0}^{\sinh^{-1}\left[\(\frac{x}{\omega_{n-1}}\)^\frac{1}{n-1}\right]}\omega_{n-1}\sinh^{n-1}(s)ds.
$$
Moreover, the equality holds if and only if $\Sigma$ is a geodesic sphere in $\H^n$.
\end{theo}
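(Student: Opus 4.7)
The natural plan is to flow $\Sigma$ by mean curvature in $\H^n$,
\begin{align*}
\frac{\partial}{\partial t}X=-H\nu,\qquad X(\cdot,0)=X_0(\cdot),
\end{align*}
and exploit monotonicity along the flow. By the Andrews-Chen smooth convergence theorem for MCF of hypersurfaces with positive Ricci curvature in $\H^n$ (cited in Section 3), this flow admits a unique smooth solution on a maximal interval $[0,T^\ast)$, preserves positive Ricci (so in particular $p_1>0$ throughout), and contracts $\Sigma_t$ to a round point, with the rescaled flow converging smoothly to a round sphere. Set $A_0(t)=f^{-1}(V(t))$ and let $\rho_0(t)$ denote the radius of the geodesic ball of volume $V(t)=\Vol(\Omega_t)$, so that the right hand side of (\ref{8.2}) takes the clean form $G(V(t)):=A_0\tanh\rho_0$.

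Define the functional
\begin{align*}
Q(t):=\int_{\Sigma_t}\frac{1}{p_1}\,d\mu_t - G(V(t)),
\end{align*}
and aim to show: (i) $Q$ is non-increasing along MCF, and (ii) $\lim_{t\to T^\ast}Q(t)=0$. Together these give $Q(0)\geq 0$, which is (\ref{8.2}). Statement (ii) is the easier step: the smooth rescaled convergence implies that $\Sigma_t$ is asymptotically a shrinking geodesic ball, on which (\ref{8.2}) holds with equality; hence $Q(t)\to 0$ as $t\to T^\ast$.

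For (i), the evolution $\partial_t H=\Delta H+(|A|^2-(n-1))H$ and $\partial_t d\mu_t=-H^2 d\mu_t$ in $\H^n$, combined with integration by parts on the $\Delta H$ term, give
\begin{align*}
\frac{d}{dt}\int_{\Sigma_t}\frac{1}{p_1}\,d\mu_t
=-2\int\frac{|\nabla p_1|^2}{p_1^3}-\int\frac{|A|^2}{p_1}+(n-1)\int\frac{1}{p_1}-(n-1)^2\int p_1.
\end{align*}
Since $V'(t)=-(n-1)\int p_1$ and, on parametrizing $G$ through $\rho_0$ (using $dV/d\rho_0=A_0$), one finds $G'(V)=(n-1)+\mrm{sech}^2\rho_0$, we obtain
\begin{align*}
Q'(t)=-2\int\frac{|\nabla p_1|^2}{p_1^3}-\int\frac{|A|^2}{p_1}+(n-1)\int\frac{1}{p_1}+(n-1)\mrm{sech}^2\rho_0\int p_1.
\end{align*}
The Newton-Maclaurin inequality $|A|^2\geq (n-1)p_1^2$ absorbs one $\int p_1$ term; showing $Q'\leq 0$ then reduces to coupling the Cauchy-Schwarz bound $A^2\leq \int p_1\cdot \int 1/p_1$ with the hyperbolic isoperimetric inequality $A\geq A_0$ (so that $\rho\geq\rho_0$, where $A=\omega_{n-1}\sinh^{n-1}\rho$), together with the pointwise consequence $p_1\geq 1$, equivalently $|A|^2\leq (n-1)^2 p_1^2-(n-1)(n-2)$, of positive Ricci.

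The main obstacle is precisely this mixed-sign monotonicity estimate: the terms appearing in $Q'(t)$ do not have definite signs individually, and the required bound genuinely couples pointwise curvature data ($|A|^2$, $p_1$, $\nabla p_1$) with the global quantity $\rho_0(V)$ coming from the isoperimetric profile. Once this is obtained, the rigidity statement is standard: $Q(0)=0$ forces $Q\equiv 0$, so Cauchy-Schwarz and Newton-Maclaurin become equalities at every $t>0$, making $p_1$ constant on each $\Sigma_t$; Alexandrov-type rigidity in $\H^n$ then gives that $\Sigma_t$ is a geodesic sphere, and smooth convergence as $t\to 0^+$ yields that $\Sigma$ itself is a geodesic sphere in $\H^n$.
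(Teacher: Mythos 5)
Your overall framework is the right one and matches the paper: evolve $\Sigma$ by mean curvature, invoke the Andrews--Chen convergence theorem to preserve positive Ricci and guarantee shrinking to a round point, track a deficit functional, and use its limiting value plus a monotonicity statement to deduce the inequality at $t=0$. Your formulas for $\frac{d}{dt}\int 1/p_1$, $V'(t)$, and $G'(V)=(n-1)+\mathrm{sech}^2\rho_0$ all check out, and you correctly identify that the whole argument hinges on a monotonicity estimate mixing pointwise curvature data with the global radius $\rho_0(V)$. However, you have not closed that estimate, and the route you sketch would not close it.

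There are two genuine gaps. First, you aim to prove $Q'(t)\leq 0$ for $Q(t)=\int_{\Sigma_t}1/p_1 - A_0\tanh\rho_0$. After Newton--MacLaurin this would require
\begin{align*}
\int_{\Sigma_t}\frac{1}{p_1}\ \leq\ \tanh^2\rho_0\,\int_{\Sigma_t}p_1,
\end{align*}
i.e.\ an \emph{upper} bound on $\int 1/p_1$. Cauchy--Schwarz $A^2\leq\int p_1\cdot\int 1/p_1$ gives a \emph{lower} bound on $\int 1/p_1$ and is therefore in the wrong direction. The paper avoids this issue entirely: it does not prove $Q'\leq 0$, but the weaker Gronwall inequality $Q'(t)\leq (n-1)Q(t)$, equivalently that $e^{-(n-1)t}Q(t)$ is non-increasing. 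After Newton--MacLaurin the Gronwall inequality reduces to a \emph{lower} bound on $\int p_1$, namely $\int_{\Sigma_t}p_1\geq A_0\coth\rho_0$, which is much more tractable. You should insert the factor $e^{-(n-1)t}$ in the definition of the functional.

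Second, the missing lemma that closes the argument is not one of the tools you list. It is the Minkowski-type inequality
\begin{align*}
\int_{\Sigma_t}p_1\ \geq\ |\Sigma_t|\left[1+\left(\frac{|\Sigma_t|}{\omega_{n-1}}\right)^{-\frac{2}{n-1}}\right]^{1/2},
\end{align*}
valid on $\Sigma_t$ because positive Ricci is preserved. Combined with the hyperbolic isoperimetric inequality $|\Sigma_t|\geq f^{-1}(V(t))=A_0$ and the monotonicity of $x\mapsto x[1+(x/\omega_{n-1})^{-2/(n-1)}]^{1/2}$, this yields exactly $\int p_1\geq A_0\coth\rho_0$, and the Gronwall estimate follows. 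Since $e^{-(n-1)t}Q(t)$ is non-increasing and tends to $0$, one gets $Q(0)\geq 0$. Your rigidity discussion is essentially right, but the equality case is forced by equality in Newton--MacLaurin (which makes each $\Sigma_t$ totally umbilical), not by Cauchy--Schwarz, which never enters the paper's argument.
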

\begin{rem}
When $n=2$, (\ref{8.2}) reduces to (\ref{8.1}).
\end{rem}
Given a smooth compact immersion $X_0:M^{n-1} \ra \H^n$, where $n\geq 3$, we consider a smooth family of immersion $X:M^{n-1}\times [0,T) \ra \H^n$ with the initial data $X_0$ which evolves by
\begin{align}\label{8.3}
\left\{\begin{aligned}
&\frac{\partial X}{\partial t}=-H\nu,\\
&X(\cdot,0)=X_0,
\end{aligned}\right.
\end{align}
where $\nu$ is the outward unit normal and $H$ is the mean curvature of the flow hypersurface $\Sigma_t=X(M,t)$, respectively. This flow is called the {\em mean curvature flow} (briefly, MCF).

In the landmark work \cite{Huisken1984}, Huisken proved that for a compact convex hypersurface in $\R^n$, it smoothly evolves along the MCF until it shrinks to a round point.
For hypersurfaces in non-Euclidean background spaces, the understanding of behavior is less complete, see \cite{Andrews2003,Huisken1986}.
When the ambient space is the hyperbolic space $\H^n$, Huisken \cite{Huisken1986} proved the smooth convergence of the MCF under the condition $\k_i H \geq n-1$ for all $i$, which is weaker than horospherical convexity (i.e., all principal curvatures $\k_i\geq 1$).
Recently, Chen and the first author \cite[Theorem 1]{Andrews-Chen2015} proved the smooth convergence of the MCF in hyperbolic space under the condition of positive Ricci curvature (i.e., $\k_i(H-\k_i)>n-2$ for all $i$).

\begin{theo}\cite{Andrews-Chen2015}\label{converegence-theorem}
For any embedding $X_0:M^{n-1} \ra \H^{n}$ with positive Ricci curvature, there exists a smooth solution of the MCF (\ref{8.3}) on a maximal time interval $[0,T^\ast)$. The hypersurfaces $\Sigma_t=X_t(M)$ have positive Ricci curvature for each $t\in (0,T^\ast)$, and smoothly converge to a round point as $t\ra T^\ast$.
\end{theo}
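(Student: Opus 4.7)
The plan is to follow the Huisken-style scheme adapted to the hyperbolic background, with the positive Ricci assumption replacing the stronger $\k_i H\geq n-1$ condition used in \cite{Huisken1986}. Short-time existence of a smooth solution to (\ref{8.3}) is standard from parabolic theory in local graph coordinates, so the task reduces to three structural estimates: preservation of positive Ricci curvature along the flow, a pinching estimate controlling how far $\Sigma_t$ is from being totally umbilical, and a contraction-to-a-point statement together with smooth convergence after rescaling.

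For the preservation of positive Ricci curvature, recall that by the Gauss equation in $\H^n$ one has $\Ric_{ij}=Hh_{ij}-(h^2)_{ij}-(n-2)g_{ij}$, so $\Ric$ is a polynomial in the Weingarten tensor. I would compute the evolution of $S_{ij}:=\Ric_{ij}$ under (\ref{8.3}) and apply Theorem \ref{theo-Andrews} to this symmetric $2$-tensor. At a point where $S_{ij}$ has a null eigenvector $v$ with $\k_1$ the associated principal curvature, verifying the inequality (\ref{Andrews-evolution-2}) reduces to an algebraic fact: the reaction term decomposes into $\dot{\Psi}^{kl}$ against gradient quadratics that can be made nonnegative by the same Codazzi/optimal-choice-of-$\L$ trick used in Lemma \ref{lem-3.9}, plus a zero-order term whose sign is controlled exactly by $\k_1(H-\k_1)\geq n-2$. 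This borderline matching of the curvature hypothesis with the reaction term is what makes positive Ricci the natural assumption.

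Next I would derive a pinching estimate of the form
\begin{equation*}
|A|^2-\frac{H^2}{n-1}\leq C\bigl(H^{2-\sigma}+1\bigr)
\end{equation*}
for some small $\sigma>0$ depending on the initial pinching. Following Huisken, I would introduce the scale-invariant quantity $f_\sigma:=(|A|^2-H^2/(n-1))H^{\sigma-2}$, derive its evolution, and run a Stampacchia iteration on $\int_{\Sigma_t} f_\sigma^p\,d\mu_t$ for $p$ large. The preserved positivity of $\Ric$ is used to absorb hyperbolic correction terms in the Simons-type identity via terms of the form $\Ric_{ij}h^{ij}$, which now have a favorable sign, and to obtain reverse Poincar\'e-type inequalities. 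The iteration yields a uniform pointwise bound on $|A|^2/H^2$, refining to an asymptotic umbilicity statement $|A|^2-H^2/(n-1)=o(H^2)$ at the maximal curvature scale.

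Finally, to extract convergence to a round point, I would show that $H_{\max}(t)\ra\infty$ as $t\ra T^\ast<\infty$: the preserved Ricci lower bound together with Myers' theorem applied intrinsically to $(\Sigma_t,g(t))$ yields a uniform diameter bound, so the shrinking must concentrate at a single point. Rescaling the flow so that $H_{\max}=1$ and combining the pinching estimate with Krylov-Safonov regularity and bootstrapping of the evolution equations for $\nabla^m A$ produces uniform $C^\infty$ bounds; the limit is a closed, totally umbilical hypersurface in $\H^n$, hence a geodesic sphere. The main obstacle is the pinching step: the ambient negative curvature introduces $-(n-1)H$-type lower-order terms in the evolution of $|A|^2-H^2/(n-1)$ with unfavorable sign, and matching these against the gradient terms from Simons' identity while only using the (weakest possible) positivity of $\Ric$ is the delicate part of the argument, requiring careful choice of $\sigma$ and Sobolev constants in the Stampacchia scheme.
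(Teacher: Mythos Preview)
The paper does not contain a proof of this theorem: it is quoted verbatim as a result from \cite{Andrews-Chen2015} and used as a black box in the proof of Theorem~\ref{main-theo-4}. So there is no ``paper's own proof'' to compare against; your proposal is an attempt to reconstruct the argument of Andrews--Chen rather than anything in the present paper.

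As a sketch of that external result, your outline is broadly in the right spirit (preservation of a curvature condition via a tensor maximum principle, pinching, rescaled convergence), but a couple of points deserve care. First, your use of the notation $\dot{\Psi}^{kl}$ in the Ricci-preservation step imports the HMCF setup of Section~3; for MCF the speed is $\Psi=H$, so $\dot{\Psi}^{kl}=g^{kl}$ and the structure of the null-vector computation is rather different from Lemma~\ref{lem-3.9}. Second, the Stampacchia iteration on $f_\sigma=(|A|^2-H^2/(n-1))H^{\sigma-2}$ is Huisken's scheme from \cite{Huisken1986}, which was carried out under the stronger hypothesis $\k_iH>n-1$ precisely because that is what makes the lower-order hyperbolic terms manageable in the $L^p$ estimates. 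The point of \cite{Andrews-Chen2015} is to get by with only positive Ricci; their argument uses a sharper pointwise pinching quantity and maximum-principle reasoning tailored to the Ricci condition rather than the integral iteration you describe. Your claim that ``$\Ric_{ij}h^{ij}$ has a favorable sign'' is not by itself enough to close the Stampacchia loop under the weaker assumption, and this is exactly the gap between \cite{Huisken1986} and \cite{Andrews-Chen2015}. If you want to reproduce the result, you should consult \cite{Andrews-Chen2015} directly for the correct pinching quantity rather than transplanting Huisken's $f_\sigma$.
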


We first collect the following evolution equations along the MCF (\ref{8.1}), see \cite{Huisken1984}.
\begin{lem}
Along the MCF (\ref{8.1}), we have the following evolution equations:
\begin{align}\label{8.4}
\frac{\partial}{\partial t} H=\D H+(|A|^2-(n-1))H.
\end{align}
\begin{align}\label{8.5}
\frac{\partial}{\partial t} d\mu_t=-H^2 d\mu_t.
\end{align}
\end{lem}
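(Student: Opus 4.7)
The plan is to use the mean curvature flow (8.3) on $\Sigma$, together with the smooth convergence to a round point from Theorem~\ref{converegence-theorem}, via a monotonicity-of-functional argument in the style of the preceding sections. Set $V(t) := \mrm{Vol}(\Omega_t)$ and $I(t) := \int_{\Sigma_t} 1/p_1\, d\mu_t = (n-1)\int_{\Sigma_t} 1/H\, d\mu_t$. The key step is to identify the right comparison profile: let $\phi(V) := \omega_{n-1}\sinh^{n-1}\!\rho_*(V)\,\tanh\rho_*(V)$, where $\rho_*(V)$ is determined by $V = \int_0^{\rho_*}\omega_{n-1}\sinh^{n-1}(s)\,ds$; this is precisely the value of $\int 1/p_1$ on a geodesic sphere enclosing volume $V$. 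A short computation verifies that $\phi(V)$ equals the right-hand side of (8.2) and that $\phi'(V) = n - \tanh^2\!\rho_*(V)$.

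Next I would compute the evolutions. By (8.5), $\dot V = -\int_{\Sigma_t} H\,d\mu$. For $I(t)$, applying (8.4)--(8.5), integrating the $\Delta H$ term by parts (which introduces a non-positive $|\nabla H|^2$ contribution that I discard), and invoking the Newton--MacLaurin inequality $|A|^2 \geq H^2/(n-1)$, I get the sharp differential inequality $\dot I \leq n\,\dot V + (n-1)\,I$, with equality on shrinking geodesic spheres. Defining $Q(t) := I(t) - \phi(V(t))$ then gives
\[\dot Q \;\leq\; (n - \phi'(V))\dot V + (n-1)\,I \;=\; -\tanh^2\!\rho_*(V)\int_{\Sigma_t}H\,d\mu \;+\; (n-1)\,I,\]
so the desired monotonicity $\dot Q \leq 0$ reduces to the pointwise inequality
\[(n-1)^2\int_{\Sigma_t}\frac{1}{H}\,d\mu \;\leq\; \tanh^2\!\rho_*(V(t))\int_{\Sigma_t}H\,d\mu,\]
which becomes an equality on geodesic spheres. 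The main technical obstacle is establishing this last inequality: while the positive-Ricci assumption ensures both $H > n-1$ pointwise on $\Sigma_t$ (via the Gauss equation combined with Cauchy--Schwarz) and preserves strict convexity along the flow (Theorem~\ref{converegence-theorem}), closing the inequality with the correct sharp constant appears to require a careful interplay among the Cauchy--Schwarz inequality, the Minkowski-type inequality in $\H^n$, and the isoperimetric relationship between area and enclosed volume.

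Granted this monotonicity, the remainder of the argument is immediate. As $t \to T^\ast$, Theorem~\ref{converegence-theorem} yields smooth convergence of $\Sigma_t$ to a round point, so $V(t), I(t) \to 0$ and (by continuity, since $\phi(0)=0$) $\phi(V(t)) \to 0$, giving $Q(T^\ast) = 0$. Together with $\dot Q \leq 0$ this yields $Q(0) \geq 0$, which is (8.2). For rigidity, equality in (8.2) forces $Q \equiv 0$ on $[0, T^\ast)$ and hence $\dot Q \equiv 0$; equality in the Newton--MacLaurin step forces $\kappa_1 = \cdots = \kappa_{n-1}$ pointwise on each $\Sigma_t$, so each flow hypersurface is totally umbilical. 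Combined with the constant-$H$ consequence of the equality case of the sphere-comparison step, each $\Sigma_t$ is a geodesic sphere; letting $t \to 0^+$ shows $\Sigma$ itself is a geodesic sphere in $\H^n$.
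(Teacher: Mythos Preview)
You have misidentified the statement you are asked to prove. The lemma in question records only the two standard evolution equations under mean curvature flow in $\H^n$: the evolution of $H$ (equation (8.4)) and of the volume form $d\mu_t$ (equation (8.5)). These are elementary computations that the paper simply cites from Huisken's work, and a proof would consist of differentiating the first and second fundamental forms under the variation $\partial_t X = -H\nu$, then using the Gauss and Codazzi equations together with the ambient curvature of $\H^n$. Your proposal does not address either equation at all.

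What you have written is instead an outline of a proof of Theorem~\ref{main-theo-4}, the Heintze--Karcher type inequality (8.2). Even viewed in that light, your argument has a genuine gap: after reducing $\dot Q \leq 0$ to the inequality
\[
(n-1)^2\int_{\Sigma_t}\frac{1}{H}\,d\mu \;\leq\; \tanh^2\!\rho_*(V(t))\int_{\Sigma_t}H\,d\mu,
\]
you acknowledge this as ``the main technical obstacle'' but provide no mechanism to establish it. The paper's approach to Theorem~\ref{main-theo-4} avoids this obstacle entirely by working with the weighted functional $e^{-(n-1)t}\bigl[I(t)-\phi(V(t))\bigr]$ rather than your $I(t)-\phi(V(t))$; the exponential prefactor absorbs the $(n-1)I$ term in the differential inequality, and what remains is closed using Theorem~A (the Minkowski-type inequality $\int p_1 \geq |\Sigma|\bigl(1+(|\Sigma|/\omega_{n-1})^{-2/(n-1)}\bigr)^{1/2}$) combined with the hyperbolic isoperimetric inequality. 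Without the exponential weight, your reduced inequality is strictly harder and does not obviously follow from the tools available.
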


\begin{lem}
Along the MCF (\ref{8.1}), we have the following variational formulas:
\begin{align}\label{8.6}
\frac{d}{dt}\mrm{Vol}(\Omega_t)=-\int_{\Sigma_t} (n-1)p_1.
\end{align}
\begin{align}\label{8.7}
\frac{d}{dt}\int_{\Sigma_t}\frac{1}{p_1}=\int_{\Sigma_t} \left[-\frac{2}{p_1^3}|\nabla p_1|^2-\frac{\(|A|^2-(n-1)\)}{p_1}-(n-1)^2 p_1 \right].
\end{align}
\end{lem}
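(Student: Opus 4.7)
The first identity \eqref{8.6} follows immediately from the standard first variation of volume. If the boundary evolves by $\partial_t X = F\nu$ with $\nu$ the outward unit normal, then $\frac{d}{dt}\Vol(\Omega_t) = \int_{\Sigma_t} F\,d\mu_t$. Along the MCF \eqref{8.3} we have $F = -H$, and the normalization $H = (n-1)p_1$ converts this directly into $\frac{d}{dt}\Vol(\Omega_t) = -\int_{\Sigma_t}(n-1)p_1\,d\mu_t$, as required.

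For \eqref{8.7} the plan is to differentiate under the integral sign and apply the product rule using the two evolution equations \eqref{8.4} and \eqref{8.5} already stated. Dividing \eqref{8.4} by $n-1$ gives
\begin{align*}
\frac{\partial p_1}{\partial t} = \Delta p_1 + \bigl(|A|^2 - (n-1)\bigr)p_1,
\end{align*}
so that $\frac{\partial}{\partial t}(p_1^{-1}) = -p_1^{-2}\bigl[\Delta p_1 + (|A|^2 - (n-1))p_1\bigr]$. Combining this with $\partial_t d\mu_t = -H^2 d\mu_t$ from \eqref{8.5}, the time derivative of the integral splits as
\begin{align*}
\frac{d}{dt}\int_{\Sigma_t}\frac{1}{p_1}\,d\mu_t
= -\int_{\Sigma_t}\frac{\Delta p_1}{p_1^2}\,d\mu_t
  - \int_{\Sigma_t}\frac{|A|^2-(n-1)}{p_1}\,d\mu_t
  - \int_{\Sigma_t}\frac{H^2}{p_1}\,d\mu_t.
\end{align*}

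The only step that is not completely mechanical is handling the Laplacian term. Since $\Sigma_t$ is closed, integration by parts gives
\begin{align*}
-\int_{\Sigma_t}\frac{\Delta p_1}{p_1^2}\,d\mu_t
= \int_{\Sigma_t}\Bigl\langle \nabla\bigl(p_1^{-2}\bigr), \nabla p_1\Bigr\rangle\,d\mu_t
= -\int_{\Sigma_t}\frac{2}{p_1^3}|\nabla p_1|^2\,d\mu_t.
\end{align*}
Finally, using $H = (n-1)p_1$ to rewrite $H^2/p_1 = (n-1)^2 p_1$ in the last integral, the three pieces assemble to give exactly \eqref{8.7}. There is no substantive obstacle in the argument; it is a routine combination of the two evolution equations and the divergence theorem on the closed hypersurface, and the only care required is tracking signs during integration by parts and consistently applying the normalization $H=(n-1)p_1$.
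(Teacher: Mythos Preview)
Your proof is correct and follows essentially the same approach as the paper: for \eqref{8.6} the paper simply cites the first variation formula, and for \eqref{8.7} it differentiates under the integral, substitutes the evolution equations \eqref{8.4}--\eqref{8.5}, and integrates by parts to produce the gradient term, exactly as you do. Your write-up is in fact slightly more explicit about the integration-by-parts step and about the identity $H^2/p_1=(n-1)^2p_1$, which the paper leaves implicit.
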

\begin{proof}
For a proof of (\ref{8.6}), see e.g. \cite{Reilly1973}. We give a proof of (\ref{8.7}).
\begin{align*}
\frac{d}{dt}\int_{\Sigma_t}\frac{1}{p_1}d\mu_t=& -\int_{\Sigma_t}\frac{1}{p_1^2}\frac{\partial}{\partial t}p_1 d\mu_t-(n-1)^2\int_{\Sigma_t} p_1 d\mu_t \\
=&-\int_{\Sigma_t}\frac{1}{p_1^2}\(\D p_1 +\(|A|^2-(n-1)\)p_1\)d\mu_t-(n-1)^2\int_{\Sigma_t} p_1 d\mu_t \\
=&\int_{\Sigma_t}\left[-\frac{2}{p_1^3}|\nabla p_1|^2-\frac{\(|A|^2-(n-1)\)}{p_1}-(n-1)^2 p_1 \right]d\mu_t.
\end{align*}
\end{proof}

If the initial hypersurface $\Sigma$ has positive Ricci curvature, by Theorem \ref{converegence-theorem} we know that the flow hypersurface $\Sigma_t$ of the MCF (\ref{8.1}) has positive Ricci curvature for $t\in (0,T^\ast)$, where $T^\ast$ is the maximal existence time. Moreover, the principal curvature is pinched and hence the inner radius and outer radius is comparable as $t\ra T^\ast$. The following lemma plays a key role in our proof of Theorem \ref{main-theo-4}.
\begin{lem}\label{key-lem-1}
Let $\Sigma$ be a hypersurface with positive Ricci curvature in $\H^n$. Let $\Sigma_t$, $t\in [0,T^\ast)$ be the solution of the MCF (\ref{8.1}) with the initial hypersurface $\Sigma$, and $\Omega_t$ be the domain enclosed by $\Sigma_t$, then
\begin{align*}
\lim_{t\ra T^\ast}W_k(\Omega_t)=\left\{\begin{aligned}
&0, \quad    &0\leq k\leq n-1;\\
&\frac{\omega_{n-1}}{n}, \quad &k=n,
\end{aligned}\right.
\end{align*}
and
\begin{align*}
\lim_{t\ra T^\ast}\int_{\Sigma_t}p_j=\left\{\begin{aligned}
&0, \quad    &0\leq j\leq n-2;\\
&\omega_{n-1}, \quad &j=n-1,
\end{aligned}\right.
\end{align*}
\end{lem}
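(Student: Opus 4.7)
The plan is to mimic the argument of Lemma \ref{lem-3.6}, with Theorem \ref{converegence-theorem} playing the role of Proposition \ref{pro-convergence-HMCF}. The crucial input is that the smooth convergence of the rescaled flow to a round sphere, which Theorem \ref{converegence-theorem} asserts directly, simultaneously delivers (i) $\rho_{-}(t),\rho_{+}(t)\to 0$ as $t\to T^{\ast}$, (ii) the comparability of the inner and outer radii of $\Omega_t$ for $t$ near $T^{\ast}$, and (iii) strict convexity (indeed near-umbilicity) of $\Sigma_t$ for such $t$. In particular, unlike the HMCF case, no projection-to-the-Beltrami-Klein-ball argument of the style of Lemma \ref{lem-3.5} is needed; everything we need about the collapse is packaged into the convergence statement.

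Once $\Omega_t$ is convex and sandwiched as $B_{\rho_{-}(t)}(p)\subset \Omega_t\subset B_{\rho_{+}(t)}(p)$ with both radii shrinking to zero, the monotonicity of quermassintegrals under set inclusion (\ref{2.4}) gives
\begin{align*}
W_k(B_{\rho_{-}(t)})\le W_k(\Omega_t)\le W_k(B_{\rho_{+}(t)}), \qquad 0\le k\le n.
\end{align*}
The explicit expressions for $W_k(B_\rho)$ already written out inside the proof of Lemma \ref{lem-3.6} (starting from (\ref{2.6}) and (\ref{2.7})) show $\lim_{\rho\to 0^{+}}W_k(B_\rho)=0$ for $0\le k\le n-1$, while Gauss--Bonnet--Chern gives $W_n(B_\rho)\equiv \omega_{n-1}/n$. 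The squeeze then yields the claimed quermassintegral limits. Feeding these into the recursive formulas (\ref{2.5}) with $\e=-1$ produces the curvature-integral limits: for $0\le j\le n-2$ both $W$-terms in the expression for $\int_{\Sigma_t}p_j$ vanish in the limit, whereas for $j=n-1$ one obtains $nW_n(\Omega_t)+\tfrac{n(n-1)}{2}W_{n-2}(\Omega_t)\to \omega_{n-1}+0=\omega_{n-1}$.

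The only real subtlety is that the hypothesis of positive Ricci curvature, $\k_i(H-\k_i)>n-2$, does not on its own imply convexity of $\Sigma$, so strictly speaking the quermassintegrals of $\Omega$ need not be defined via (\ref{2.3}) at $t=0$, and the set-inclusion monotonicity (\ref{2.4}) is justified only once $\Omega_t$ becomes convex. But for the statement of the lemma, which concerns only limits as $t\to T^{\ast}$, we need convexity merely on some neighborhood of $T^{\ast}$, and this is exactly what Theorem \ref{converegence-theorem} supplies via near-umbilicity. This is the main (and essentially only) obstacle, and dispensing with it reduces the argument to a verbatim copy of the proof of Lemma \ref{lem-3.6}.
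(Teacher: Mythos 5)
Your proposal is correct and follows essentially the same template as the paper's proof of Lemma~\ref{lem-3.6}: sandwich $\Omega_t$ between geodesic balls whose radii both tend to zero, invoke monotonicity of $W_k$ under inclusion, compute $W_k(B_\rho)$ explicitly via (\ref{2.6})--(\ref{2.7}), and feed the result through the recursion (\ref{2.5}). The paper's proof of this lemma is a one-liner pointing to the pinching estimate in \cite[Corollary~10]{Andrews-Chen2015}, which signals that it intends the Lemma~\ref{lem-3.5}-style Beltrami--Klein projection argument for radius comparability, whereas you bypass this by appealing directly to the round-point convergence in Theorem~\ref{converegence-theorem}. That shortcut is legitimate: the statement that the rescaled flow converges to a round sphere already entails $\rho_+(t)/\rho_-(t)\to 1$, so it subsumes the comparability that the projection argument would otherwise supply.

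One small inaccuracy worth correcting: the ``subtlety'' you raise is not actually there. For a closed hypersurface in $\H^n$ with $n\ge 3$, positive Ricci curvature $\kappa_i(H-\kappa_i)>n-2>0$ \emph{does} imply strict convexity. At any point, if $\kappa_i>0$ for some $i$ then $H>\kappa_i>0$, while if $\kappa_j<0$ for some $j$ then $H<\kappa_j<0$; these are incompatible, so all principal curvatures have the same sign at each point. The all-negative case is excluded by the farthest-point argument (at the point of $\Sigma$ farthest from a fixed interior point, all $\kappa_i\ge\coth d>0$), and sign changes from point to point are excluded by continuity (one would pass through $\kappa_i=0$, violating positive Ricci). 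Hence $\Omega_t$ is a convex domain for every $t\in[0,T^\ast)$, and (\ref{2.3}), (\ref{2.4}) apply throughout; your near-umbilicity fallback, though valid, is unnecessary.
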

\begin{proof}
The proof is similar to the proof of Lemma \ref{lem-3.6}, since we have the pinching estimate along the MCF, see \cite[Corollary 10]{Andrews-Chen2015}.
\end{proof}

Here we establish the monotonicity of the following functional
\begin{align*}
Q(t):=e^{-(n-1)t}\left[\int_{\Sigma_t} \frac{1}{p_1}-\frac{f^{-1}(\mrm{Vol}(\Omega_t))}{\sqrt{1+\(\frac{f^{-1}(\Vol(\Omega_t))}{\omega_{n-1}}\)^{-\frac{2}{n-1}}}}\right].
\end{align*}

\begin{lem}\label{lem-monotonicity}
Along the MCF (\ref{8.1}), $Q(t)$ is monotone decreasing unless it is totally umbilical.
\end{lem}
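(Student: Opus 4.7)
My plan is to differentiate $Q(t)$ using the evolution formulas (\ref{8.5})--(\ref{8.7}) and reduce the inequality $\frac{d}{dt}Q(t)\leq 0$ to a Minkowski-type comparison of $\int_{\Sigma_t}p_1$ with its value on the geodesic sphere of the same enclosed volume. The first step is to give $G(t):=\frac{f^{-1}(\Vol(\Omega_t))}{\sqrt{1+(f^{-1}(\Vol(\Omega_t))/\omega_{n-1})^{-2/(n-1)}}}$ a clean geometric meaning. Writing $u:=f^{-1}(\Vol(\Omega_t))$ and $\sinh\rho:=(u/\omega_{n-1})^{1/(n-1)}$, one checks that $G=u\tanh\rho=\omega_{n-1}\sinh^n\rho/\cosh\rho$, which is precisely $\int_{S_\rho}1/p_1$ on the geodesic sphere enclosing the same volume as $\Omega_t$. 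Using $dV/d\rho=\omega_{n-1}\sinh^{n-1}\rho=u$, the chain rule gives $dG/dV=n-\tanh^2\rho$, so by (\ref{8.6}),
\begin{equation*}
\frac{dG}{dt}=-(n-1)(n-\tanh^2\rho)\int_{\Sigma_t}p_1.
\end{equation*}

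Next I would plug this and (\ref{8.7}) into $\frac{d}{dt}Q(t)$. The term $-(n-1)e^{-(n-1)t}\int_{\Sigma_t}1/p_1$ coming from differentiating the exponential cancels against the $+(n-1)\int_{\Sigma_t}1/p_1$ term emerging from $-\int_{\Sigma_t}\frac{|A|^2-(n-1)}{p_1}$ in (\ref{8.7}), and after collecting the $\int p_1$ contributions via $-(n-1)^2+(n-1)(n-\tanh^2\rho)=(n-1)\cosh^{-2}\rho$, the derivative organises itself as
\begin{equation*}
\frac{d}{dt}Q(t)=e^{-(n-1)t}\left\{(n-1)u\tanh\rho+\frac{n-1}{\cosh^2\rho}\int_{\Sigma_t}p_1-\int_{\Sigma_t}\frac{|A|^2}{p_1}-\int_{\Sigma_t}\frac{2|\nabla p_1|^2}{p_1^3}\right\}.
\end{equation*}
Discarding the manifestly non-positive gradient term and applying the Newton--MacLaurin inequality $|A|^2\geq H^2/(n-1)=(n-1)p_1^2$, with equality iff $\Sigma_t$ is totally umbilical, the claim $\frac{d}{dt}Q(t)\leq 0$ reduces to $(n-1)\tanh^2\rho\int_{\Sigma_t}p_1\geq (n-1)u\tanh\rho$, i.e., to the Minkowski-type inequality
\begin{equation*}
\int_{\Sigma_t}p_1\geq u\coth\rho,
\end{equation*}
equivalently $W_2(\Omega_t)\geq W_2(B_\rho)$ at fixed enclosed volume.

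The main obstacle is justifying this comparison for the geometric class at hand. By Theorem~\ref{converegence-theorem} positive Ricci is preserved along the MCF, and for $n\geq 3$ the condition $\kappa_i(H-\kappa_i)>n-2$ forces all principal curvatures to be strictly positive after a suitable orientation, so $\Sigma_t$ is strictly convex. The desired bound is then the Alexandrov--Fenchel comparison $W_2(\Omega_t)\geq W_2(B_\rho)$ for convex hypersurfaces at fixed volume in $\H^n$, read off via the recursive formula (\ref{2.5}) as $\int_{\Sigma_t}p_1=nW_2(\Omega_t)+\Vol(\Omega_t)$. For the equality clause, $\frac{d}{dt}Q(t)=0$ forces equality in Newton--MacLaurin pointwise on $\Sigma_t$, hence $\Sigma_t$ is totally umbilical, which is the stated characterization.
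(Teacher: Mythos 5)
Your computation is correct and, after unwinding the notation, arrives at exactly the same reduction the paper performs: the inequality $\frac{d}{dt}Q\leq 0$ reduces to the Minkowski-type comparison
\[
\int_{\Sigma_t}p_1 \;\geq\; u\coth\rho
\;=\; f^{-1}(\mathrm{Vol}(\Omega_t))\sqrt{1+\Bigl(\tfrac{f^{-1}(\mathrm{Vol}(\Omega_t))}{\omega_{n-1}}\Bigr)^{-\frac{2}{n-1}}}.
\]
Your route to this reduction is slightly cleaner than the paper's: by recognizing $G$ as $\int_{S_\rho}1/p_1$ on the volume-matched geodesic sphere, computing $dG/dV=n-\tanh^2\rho$, and using the identity $-(n-1)^2+(n-1)(n-\tanh^2\rho)=(n-1)/\cosh^2\rho$, you avoid the slightly opaque algebraic manipulation with the factor $\frac{1+\frac{n}{n-1}\alpha}{1+\alpha}$ that appears in the paper. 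The cancellation of the $(n-1)\int 1/p_1$ term against the exponential factor is also handled correctly.

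However, there is a genuine gap at the last step. You assert the inequality $W_2(\Omega_t)\geq W_2(B_\rho)$ at fixed enclosed volume as ``the Alexandrov--Fenchel comparison for convex hypersurfaces in $\H^n$,'' as if this were a standing classical fact. In hyperbolic space it is not: establishing Alexandrov--Fenchel comparisons between quermassintegrals of convex (or merely $k$-convex starshaped) bodies in $\H^n$ is precisely the content of several recent papers, including the one under review. Nor does Corollary 1.2 of the present paper cover it, since that statement compares $W_{n-1-(2k+1)}$ with $W_{n-2}$, not $W_2$ with $W_0$. The paper fills this gap in two explicit steps, which you omit: first, using Cauchy--Schwarz and Newton--MacLaurin together with Theorem A (the Li--Wei--Xiong Minkowski inequality for $2$-convex starshaped hypersurfaces), one obtains $\bigl(\int_{\Sigma_t}p_1\bigr)^2\geq |\Sigma_t|\int_{\Sigma_t}p_2\geq |\Sigma_t|^2\bigl(1+(|\Sigma_t|/\omega_{n-1})^{-2/(n-1)}\bigr)$; second, one applies the classical isoperimetric inequality $|\Sigma_t|\geq f^{-1}(\mathrm{Vol}(\Omega_t))$ together with the monotonicity of $x\mapsto x\sqrt{1+(x/\omega_{n-1})^{-2/(n-1)}}$. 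Both hypotheses for Theorem A (starshaped, $2$-convex) are indeed ensured by the strict convexity you correctly deduce from positive Ricci curvature, but the chain of inequalities itself must be spelled out rather than invoked by name; without it the argument is incomplete.
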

\begin{proof}
By (\ref{8.7}) and the Newton-MacLaurin inequality, we have
\begin{align*}
\frac{d}{dt}\int_{\Sigma_t} \frac{1}{p_1}\leq &\int_{\Sigma_t} \left[-(n-1)\(p_1-\frac{1}{p_1}\)-(n-1)^2 p_1 \right] \\
= &\int_{\Sigma_t} \left[(n-1)\frac{1}{p_1}-n(n-1)p_1 \right].
\end{align*}
We also have
\begin{align*}
 \frac{d}{dt} f^{-1}(\mrm{Vol}(\Omega_t))=&\frac{1}{f'(f^{-1}(\mrm{Vol}(\Omega_t)))} \frac{d}{dt} \mrm{Vol}(\Omega_t) \\
=&-(n-1)^2 \sqrt{1+\(\frac{f^{-1}(\mrm{Vol}(\Omega_t))}{\omega_{n-1}}\)^{-\frac{2}{n-1}}} \int_{\Sigma_t}p_1,
\end{align*}
and hence
\begin{align*}
 \frac{d}{dt}\left[ \frac{f^{-1}(\mrm{Vol}(\Omega_t))}{\sqrt{1+\(\frac{f^{-1}(\Vol(\Omega_t))}{\omega_{n-1}}\)^{-\frac{2}{n-1}}}}\right]=&\frac{1+\frac{n}{n-1}\(\frac{f^{-1}(\mrm{Vol}(\Omega_t))}{\omega_{n-1}}\)^{-\frac{2}{n-1}}}{\(1+\(\frac{f^{-1}(\mrm{Vol}(\Omega_t))}{\omega_{n-1}}\)^{-\frac{2}{n-1}}\)^\frac{3}{2}}\frac{d}{dt} f^{-1}(\mrm{Vol}(\Omega_t))\\
=&-(n-1)^2\frac{1+\frac{n}{n-1}\(\frac{f^{-1}(\mrm{Vol}(\Omega_t))}{\omega_{n-1}}\)^{-\frac{2}{n-1}}}{1+\(\frac{f^{-1}(\mrm{Vol}(\Omega_t))}{\omega_{n-1}}\)^{-\frac{2}{n-1}}} \int_{\Sigma_t}p_1.
\end{align*}
Combining these estimates together, we get
\begin{equation}\label{7.3.2}
\begin{split}
 &\frac{d}{dt}\left[\int_{\Sigma_t} \frac{1}{p_1}-\frac{f^{-1}(\mrm{Vol}(\Omega_t))}{\sqrt{1+\(\frac{f^{-1}(\Vol(\Omega_t))}{\omega_{n-1}}\)^{-\frac{2}{n-1}}}}\right]\\
\leq &(n-1)\int_{\Sigma_t}\frac{1}{p_1} -(n-1)\frac{\int_{\Sigma_t}p_1 }{1+\(\frac{f^{-1}(\Vol(\Omega_t))}{\omega_{n-1}}\)^{-\frac{2}{n-1}}}.
\end{split}
\end{equation}
On the other hand, the evolving hypersurfaces $\Sigma_t$ have positive Ricci curvature, so we have
\begin{align*}
\int_{\Sigma_t} p_1 \geq |\Sigma_t| \left[ 1+\(\frac{|\Sigma_t|}{\omega_{n-1}}\)^{-\frac{2}{n-1}}\right]^{\frac{1}{2}}.
\end{align*}
We define the function $h:[0,\infty)\ra \R_{+}$ as
$$
h(x):=x\left[1+\(\frac{x}{\omega_{n-1}}\)^{-\frac{2}{n-1}}\right]^\frac{1}{2}.
$$
It is easy to verify that $h$ is strictly increasing. By the isoperimetric inequality for bounded domains in hyperbolic space \cite{Schmidt1940}, we have
$$
|\Sigma_t| \geq f^{-1}(\mrm{Vol}(\Omega_t)),
$$
and hence
\begin{align*}
\int_{\Sigma_t} p_1 \geq f^{-1}(\mrm{Vol}(\Omega_t)) \left[ 1+\(\frac{f^{-1}(\mrm{Vol}(\Omega_t))}{\omega_{n-1}}\)^{-\frac{2}{n-1}}\right]^{\frac{1}{2}}.
\end{align*}
Together with (\ref{7.3.2}), we get
\begin{align*}
&\frac{d}{dt}\left[\int_{\Sigma_t} \frac{1}{p_1}-\frac{f^{-1}(\mrm{Vol}(\Omega_t))}{\sqrt{1+\(\frac{f^{-1}(\Vol(\Omega_t))}{\omega_{n-1}}\)^{-\frac{2}{n-1}}}}\right] \\
\leq &(n-1)\left[\int_{\Sigma_t} \frac{1}{p_1}-\frac{f^{-1}(\mrm{Vol}(\Omega_t))}{\sqrt{1+\(\frac{f^{-1}(\Vol(\Omega_t))}{\omega_{n-1}}\)^{-\frac{2}{n-1}}}}\right].
\end{align*}
Finally, we obtain $\frac{d}{dt}Q(t) \leq 0$. If $\frac{d}{dt}Q(t) = 0$, then the equality in Newton-MacLaurin inequality implies that $\Sigma_t$ is totally umbilical.
\end{proof}

By the pinching estimate along the MCF (\ref{8.1}), we have $\frac{1}{p_1}\ra 0$ as $t\ra T^\ast$. By Lemma \ref{key-lem-1}, we also have
\begin{align*}
|\Sigma_t| \ra 0, \quad \mrm{Vol}(\Omega_t) \ra 0, \quad \text{as $t\ra T^\ast$}.
\end{align*}
Note that $f^{-1}(0)=0$, we obtain
$$
\lim_{t\ra T^\ast} Q(t)=0.
$$
Finally, combining with the monotonicity of the functional $Q(t)$, we have
\begin{align*}
Q(0) \geq Q(t) \geq \lim_{t\ra T^\ast}Q(t)=0,
\end{align*}
If the equality holds in (\ref{8.2}), we have $Q(t)\equiv 0$ for all $t\in [0,T^\ast)$. By Lemma \ref{lem-monotonicity}, the initial hypersurface $\Sigma$ is totally umbilical. Therefore, $\Sigma$ is a geodesic sphere in $\H^n$. On the other hand, if $\Sigma$ is a geodesic sphere of radius $r$ in $\H^n$, the equality also holds in (\ref{8.2}). This completes the proof of Theorem \ref{main-theo-4}.

\end{document}